\def\gref{\ensuremath{g_{\mathrm{ref}}}}
\def\qref{\ensuremath{Q_{\mathrm{ref}}}}
\def\nunqf{\ensuremath{\nu_\mathrm{NQF}}}
\def\nuqf{\ensuremath{\nu_\mathrm{LQF}}}
\title{Stochastic conformal flows in even dimensions}
\author{Jack Piazza}
\institute{University of Wisconsin-Madison, \email{japiazza2@wisc.edu}}
\begin{document}

\maketitle

\begin{abstract}
	We define two stochastic analogs of a geometric flow on even-dimensional manifolds called $Q$-curvature flow, and use the theory of Dirichlet forms to construct weak solutions to both. The first of these flows, which we call the normalized $Q$ flow (NQF), preserves the volume normalization from the deterministic setting. The second, which we call the Liouville $Q$ flow (LQF), has a different normalization motivated by a similar flow studied by Dubédat and Shen. The volume dynamics of NQF and LQF are shown to evolve as square Bessel and CIR processes, respectively. We also show that under certain additional conditions, LQF is a stochastic quantization of the even-dimensional Polyakov-Liouville measures recently introduced by Dello Schiavo, Herry, Kopfer, and Sturm. 
\end{abstract}

\setcounter{tocdepth}{2}
\tableofcontents
\section{Introduction}

Let $M$ be a closed manifold of dimension $n$ equipped with a fixed reference Riemannian metric $\gref$. A geometric flow on $M$ is an evolution of a metric on $M$ of the form \begin{align*}
	\partial_t g_t = F(t, g_t)\;, \qquad g_0 = g_i \;.
\end{align*}
One of the most well-studied geometric flows is the Ricci flow, which has equation \begin{align}\label{ricci-flow}
	\partial_t g_t = -2 \Ric_t \;, \qquad g_0 = g_i
\end{align}
where $\Ric$ denotes the Ricci curvature tensor.

\begin{remark}
	Throughout the paper we use matching subscripts to indicate that a geometric object is taken with respect to a particular metric. For example, $\Ric_t$ is the Ricci curvature tensor of $(M, g_t)$, whereas the Ricci curvature of $(M, \gref)$ is $\Ric_{\mathrm{ref}}$. One important exception to this is that we occasionally use a subscript $g$ for a generic metric $g$ with no subscript (e.g. $\Ric_g$ denotes the Ricci curvature tensor of $(M, g)$). 
\end{remark}

One case where the Ricci flow is particularly tractable is when $n=2$. This is because two-dimensional Ricci curvature has the simple form $\Ric_g = K_g g$, where $K$ is the Gauss curvature. The Ricci flow is then \begin{align}\label{ricci-flow-2d}
	\partial_t g_t = -2 K_t g_t \;, \qquad g_0 = g_i
\end{align}
and so the time derivative of $g_t$ is a multiple (over $C^\infty (M)$) of $g_t$ itself.

Recall that two metrics $g$ and $g'$ are said to be conformally equivalent if there is a function $\phi \in C^\infty (M)$, called the conformal factor, such that \[
	g' = e^{2 \phi} g \;.
\]
As the name suggests, conformal equivalence is an equivalence relation which partitions the space of metrics on $M$ into so-called conformal classes. In the special case where $g$ is conformally equivalent to $g_\mathrm{ref}$, we denote by $\phi_g$ the conformal factor for which \[
	g = e^{2 \phi_g} g_\mathrm{ref} \;.
\]
Almost all of the metrics discussed in this paper will lie in the conformal class of $g_\mathrm{ref}$. In particular, we assume that the initial condition $g_i$ of \eqref{ricci-flow-2d} is in this class.

Inspecting \eqref{ricci-flow-2d}, one sees that any solution must remain in the same conformal class as $g_i$ (and hence $g_\mathrm{ref}$) for its entire lifetime. A geometric flow with this property is called a conformal flow. Crucially, conformal flows can be recast as partial differential equations in terms of the conformal factor. For example, if we write a solution to \eqref{ricci-flow-2d} as \[
	g_t = e^{2 \phi_t} g_\mathrm{ref}
\]
then take a time derivative and rearrange, we obtain \[
	\partial_t \phi_t = -K_t \;.
\]

This is useful because it is typically much easier to analyze equations on function spaces than on spaces of tensorial objects like Riemannian metrics.

\begin{remark}
	Note that in the above equations, $\phi_t$ relates $g_t$ to $g_\mathrm{ref}$, not $g_i$. This means that $\phi_0$ is typically not identically zero.
\end{remark}

Another simplifying aspect of conformal flows is that many geometric quantities scale in straightforward ways under conformal transformations. For example, under the conformal change of metric $g = e^{2\phi_g} g_\mathrm{ref}$, the Gauss curvature $K$ scales as \begin{align}\label{gauss-conformal-inv}
	K_g = e^{-2\phi_g} (K_\mathrm{ref} - \Delta_\mathrm{ref} \phi_g)
\end{align}
where $\Delta$ is the Laplace-Beltrami operator. Any quantity with a scaling law like this is called a conformal quasi-invariant. A conformal quasi-invariant is a conformal invariant if it remains constant under conformal transformations. For example, a consequence of the Gauss-Bonnet theorem is that $\omega (K)$ is a conformal invariant in two dimensions, where $\omega$ is the volume form associated to the metric and \begin{align*}
	\omega (f) \coloneq \int_M f \, \omega
\end{align*}
whenever this integral makes sense.

One of the most fundamental properties of the Ricci flow is that, after applying a suitable normalization, its solutions converge in many cases to metrics of constant curvature. The normalization is required to ensure that the total volume, $\omega (1)$, is held constant. In two dimensions this can be done by modifying \eqref{ricci-flow-2d} to \begin{align}\label{ricci-flow-normalized}
	\partial_t g_t = -2 (K_t - \overline{K_t}) g_t
\end{align}
where $\overline{K} = \omega (K) / \omega (1)$ denotes the average of $K$ (we also adopt this notation for functions other than $K$). The solution theory of the normalized Ricci flow has been studied extensively. See \cite{Hamilton88} and \cite{Chow91} for the two-dimensional case, \cite{Hamilton82} for the three-dimensional case, and \cite{Brendle08} for a higher-dimensional result. 

The Ricci flow can be thought of as a geometric version of the heat equation, though it is considerably more complex due to its nonlinearity. Since the stochastic heat equation is one of the most well-understood stochastic PDEs, a natural question is whether the Ricci flow also has a stochastic analog. In the conformal (i.e. two-dimensional) setting, \cite{DS22} answered this question in the affirmative by showing the existence of weak solutions to a stochastic version of the Ricci flow. 

The primary aim of this paper is to construct stochastic analogs of conformal flows in higher dimensions. Since the Ricci flow is only conformal in two dimensions, we must work with a different deterministic flow, which we introduce next.

\subsection{Q-curvature}\label{q-curv}

From now on we assume that the dimension $n$ is even. $Q$-curvature is a conformally quasi-invariant function defined on even-dimensional Riemannian manifolds. It was first introduced by Branson and Ørsted in four dimensions (\cite{BransonOrsted91}) and its properties were later developed further (see for example \cite{Branson95, BCY}), including extending it to all even dimensions. In low dimensions, $Q$-curvature can be expressed via an explicit formula in terms of the Riemann curvature tensor and its derivatives. For example, in two dimensions $Q$ is just the Gauss curvature $K$. In four dimensions it has the formula \[
	Q = -\frac{1}{6} \left( \Delta R - R^2 + 3 \lvert \Ric \rvert^2 \right)
\]
where $R = \tr \Ric$ is the Ricci scalar curvature. 

The most important property of $Q$-curvature is that it satisfies an analog of \eqref{gauss-conformal-inv}: If $g = e^{2\phi_g} g_\mathrm{ref}$ then (\cite{Branson95}, Corollary 1.4) \begin{align}\label{q-conformal-inv}
	Q_g = e^{-n \phi_g} (Q_\mathrm{ref} + P_\mathrm{ref} \phi_g) \;.
\end{align}
Here $P$ is  a differential operator of order $n$ called a co-polyharmonic operator. We will define these operators and discuss their properties in Section \ref{conf-geo}; until then, one can think of $P_g$ abstractly as an operator which is self-adjoint on $L^2 (\omega_g)$ and annihilates constant functions. There is also an analog of the conformal invariant $\omega(K)$ for $Q$-curvature. Let \[
	Q(f) \coloneq \int_M Qf \, \omega
\]
whenever this integral makes sense. Then it follows from \eqref{q-conformal-inv} that $Q(1)$ is a conformal invariant: \[
	Q_g (1) = \int_M e^{-n \phi_g} (Q_\mathrm{ref} + P_\mathrm{ref} \phi_g) \, \omega_g = \int_M Q_\mathrm{ref} + \phi_g P_\mathrm{ref} 1 \, \omega_\mathrm{ref} = Q_\mathrm{ref} (1) \;.
\]

It will be useful to assume that the reference metric $\gref$ is chosen so that the $Q$-curvature $\qref$ is constant. Branson, Chang, and Yang (\cite{BCY}) showed that in dimension four, the conformal class of $g$ contains such a metric as long as the following conditions are satisfied: \begin{enumerate}
	\item[(A1)] $P_g$ is positive semi-definite with kernel given by the constant functions.
	\item[(A2)] $Q_g (1) < Q_r (1)$, where $g_r$ is the round metric on the sphere $S^n$. 
\end{enumerate}
Brendle (\cite{Brendle2003}) showed that these conditions imply the existence of such a metric in all even dimensions. Condition (A2) was later weakened in \cite{Ndiaye07}, but we will need the full strength of (A2) for our purposes so we use the stronger version here. Since $Q(1)$ is a conformal invariant and $P$ is a conformal quasi-invariant (see Section \ref{conf-geo}), both (A1) and (A2) are class properties: they either hold for all metrics in a conformal class or for none of them. Unless otherwise stated, we always assume that there is a metric on $M$ such that (A1) and (A2) hold, and choose $\gref$ in the same conformal class so that $\qref$ is constant. We also assume throughout that $(M, g_\mathrm{ref})$ is locally conformally flat, meaning every point has a neighborhood on which $g_\mathrm{ref}$ is conformally equivalent to a metric with vanishing Riemann curvature tensor. We will discuss the geometric meaning of these conditions and classes of manifolds which satisfy them in Section \ref{topology}. 

\subsection{The Q flow}

$Q$-curvature can be used to define a conformal flow on even-dimensional manifolds. This flow is called the $Q$-curvature flow, or $Q$ flow for short. Just as solutions to the normalized 2D Ricci flow converge to metrics of constant Gauss curvature, solutions to the $Q$ flow are expected to converge to metrics with $Q$-curvature proportional to some pre-specified ``prescribing" function $f \in C^\infty (M)$. More precisely, Brendle (\cite{Brendle2003}) showed that if $f > 0$ and conditions (A1) and (A2) hold, the equation \begin{align}\label{nqf}
	\partial_t g_t = -2 \left( Q_t - \frac{Q_t (1)}{\omega_t (f)} f \right) g_t \;, \qquad g_0 = g_i
\end{align}
has a global-in-time solution and converges to a metric $g_\infty$  such that \[
	Q_\infty = \frac{Q_\infty (1)}{\omega_\infty (f)} f \;.
\]
The fraction preceding $f$ in \eqref{nqf}, whose denominator is always nonzero by the positivity of $f$, ensures that the total volume is preserved. Indeed, the associated equation for $\phi_t$ is \begin{align}\label{nqf-phi}
	\partial_t \phi_t = -\left( Q_t - \frac{Q_t (1)}{\omega_t (f)} f \right) \;.
\end{align}
It follows from dominated convergence that
\begin{align*}
	&\partial_t \omega_t (1) = \int_M \partial_t e^{n \phi_t} \, \omega_\mathrm{ref} = \int_M -n \left( Q_t - \frac{Q_t (1)}{\omega_t (f)} f \right) e^{n \phi_t} \, \omega_\mathrm{ref} \\
	&= -n \int_M \left( Q_t - \frac{Q_t (1)}{\omega_t (f)} f \right) \, \omega_t = -n \left(Q_t (1) - Q_t (1) \frac{\omega_t (f)}{\omega_t (f)} \right) = 0 \;.
\end{align*}
For this reason, we call the flow associated to \eqref{nqf} the normalized $Q$ flow, or NQF. We can also consider a flow with equation \begin{align}\label{qf}
	\partial_t g_t = -2 (Q_t - f) g_t \;, \qquad g_0 = g_i
\end{align} 
where $f$ no longer needs to be positive. We refer to this as the Liouville $Q$ flow, or LQF. The corresponding equation for $\phi$ is \begin{align}\label{qf-phi}
	\partial_t \phi_t = -\left( Q_t - f \right) \;.
\end{align}

The above equations for NQF and LQF may seem dissimilar to the Ricci flow because of the presence of the function $f$. However, if $f$ is a positive constant in NQF, \eqref{nqf} becomes
 \begin{align}\label{nqf-constant}
		\partial_t g_t = -2 (Q_t - \overline{Q_t}) g_t \;.
\end{align}
which is analogous to \eqref{ricci-flow-normalized}. Similarly, if $f = \overline{Q_i}$, \eqref{qf} becomes \begin{align}\label{qf-constant}
		\partial_t g_t = -2 (Q_t - \overline{Q_i}) g_t \;.
\end{align}
Since NQF preserves both volume and the conformal invariant $Q_t (1)$, it preserves $\overline{Q_t}$. A similar computation shows that for the particular choice of $f$ in \eqref{qf-constant},
\[
	\partial_t \omega_t (1) = n \overline{Q_i} (\omega_t (1) - \omega_i (1))
\]
so \eqref{qf-constant} also preserves volume and hence $\overline{Q_t}$ (note that this is not true more generally for \eqref{qf-phi}). It follows that \eqref{nqf-constant} and \eqref{qf-constant} have the same solutions. In particular, the normalized 2D Ricci flow is a special case of both NQF and LQF.  

\begin{remark}\label{volume-rmk}
	The equivalence between \eqref{nqf-constant} and \eqref{qf-constant} relies on the fact that both are volume-preserving. In what follows we will consider stochastic analogs of NQF and LQF for which volume is no longer preserved. Thus, the stochastic versions of \eqref{nqf-constant} and \eqref{qf-constant} will no longer have the same solutions even though their deterministic counterparts do.
	
	For a simple example of this phenomenon, consider the equations $dX_t = 0$ and $dX_t = (X_t - X_0) \, dt$ for a real-valued process $X$. Though they have the same (constant) solutions, adding a noise term $dB_t$ to each produces equations with drastically different solutions. We must therefore be careful to take note of which properties will fail to transfer to the stochastic setting.
\end{remark}

NQF and LQF can both be expressed as gradient flows. Let $\CM_0$ be the space $(C^\infty (M), \mathbf{g})$, where $\mathbf{g}$ is the Calabi metric defined by \[
	\mathbf{g}_{\phi} (h_1, h_2) = \int_M h_1 h_2 e^{n \phi} \, \omega_{\mathrm{ref}} \;.
\]
Note that since $C^\infty (M)$ is infinite-dimensional, $\mathbf{g}$ is a purely formal Riemannian structure which does not turn $\CM_0$ into a Riemannian manifold. Regardless, this choice of $\mathbf{g}$ is natural from a geometric perspective because at the point $\phi_g$, it is the $L^2$ inner product associated to the volume form of $g = e^{2\phi_g} \gref$. Consider the following two functionals on $\CM_0$: \begin{align}\label{nqf-functional}
	E^1_f [\phi] = \int_M \frac{1}{2} \phi P_{\mathrm{ref}} \phi \, \omega_{\mathrm{ref}} + \int_M Q_\mathrm{ref} \phi \, \omega_{\mathrm{ref}} - \frac{1}{n} Q_\mathrm{ref} (1) \log \left( \int_M e^{n \phi} f \, \omega_\mathrm{ref} \right)
\end{align}
and \begin{align}\label{qf-functional}
	E^2_f [\phi] = \int_M \frac{1}{2} \phi P_{\mathrm{ref}} \phi \, \omega_{\mathrm{ref}} + \int_M Q_\mathrm{ref} \phi \, \omega_{\mathrm{ref}} - \frac{1}{n} \int_M e^{n \phi} f \, \omega_\mathrm{ref} \;.
\end{align}
\begin{proposition}
	The gradient flows for the functionals $E^1_f$ and $E^2_f$ on $\CM_0$ are precisely the flows \eqref{nqf-phi} and \eqref{qf-phi} for the conformal factor $\phi$ in NQF and LQF respectively. 
\end{proposition}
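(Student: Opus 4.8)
The plan is to compute the first variation of each functional and to identify its gradient with respect to the Calabi metric $\mathbf{g}$. Recall that for a functional $E$ on $\CM_0$, the gradient $\nabla E[\phi]$ is the function (when it exists) characterized by $\mathbf{g}_\phi(\nabla E[\phi], h) = dE[\phi](h)$ for every $h \in C^\infty(M)$, where $dE[\phi](h) := \frac{d}{ds} E[\phi + sh] \big|_{s=0}$, and that the associated gradient flow is $\partial_t \phi_t = -\nabla E[\phi_t]$. Each of the three terms defining $E^1_f$ and $E^2_f$ depends smoothly on $\phi$ (say in the $C^\infty$ topology), so differentiation under the integral sign is justified and $dE^i_f[\phi](h)$ exists for every $h$; the task is then to exhibit $dE^i_f[\phi](h)$ in the form $\mathbf{g}_\phi(v, h)$, whereupon $v = \nabla E^i_f[\phi]$.

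First I would treat the two terms common to $E^1_f$ and $E^2_f$. Differentiating $\int_M \tfrac12 \phi P_{\mathrm{ref}} \phi \, \omega_{\mathrm{ref}}$ and using that $P_{\mathrm{ref}}$ is symmetric on $L^2(\omega_{\mathrm{ref}})$ yields $\int_M h \, P_{\mathrm{ref}} \phi \, \omega_{\mathrm{ref}}$, and differentiating $\int_M Q_{\mathrm{ref}} \phi \, \omega_{\mathrm{ref}}$ yields $\int_M Q_{\mathrm{ref}} \, h \, \omega_{\mathrm{ref}}$. Writing $g = e^{2\phi} \gref$ and invoking the conformal covariance law of Equation \ref{q-conformal-inv}, namely $e^{-n\phi}(Q_{\mathrm{ref}} + P_{\mathrm{ref}} \phi) = Q_g$, the sum of these two contributions is $\int_M h \, (Q_{\mathrm{ref}} + P_{\mathrm{ref}} \phi) \, \omega_{\mathrm{ref}} = \int_M h \, Q_g \, e^{n\phi} \, \omega_{\mathrm{ref}} = \mathbf{g}_\phi(Q_g, h)$.

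It then remains to differentiate the nonlinear terms. For $E^2_f$, differentiating $-\tfrac1n \int_M e^{n\phi} f \, \omega_{\mathrm{ref}}$ gives $-\int_M h \, e^{n\phi} f \, \omega_{\mathrm{ref}} = -\mathbf{g}_\phi(f, h)$, so $dE^2_f[\phi](h) = \mathbf{g}_\phi(Q_g - f, h)$, whence $\nabla E^2_f[\phi] = Q_g - f$ and the gradient flow $\partial_t \phi_t = -(Q_t - f)$ is precisely Equation \ref{qf-phi}. For $E^1_f$, the chain rule gives, for the last term, $-Q_{\mathrm{ref}}(1) \dfrac{\int_M h \, e^{n\phi} f \, \omega_{\mathrm{ref}}}{\int_M e^{n\phi} f \, \omega_{\mathrm{ref}}}$. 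Using $\int_M e^{n\phi} f \, \omega_{\mathrm{ref}} = \omega_g(f)$ and the conformal invariance $Q_{\mathrm{ref}}(1) = Q_g(1)$ established in Section \ref{q-curv}, this equals $-\mathbf{g}_\phi\!\left( \dfrac{Q_g(1)}{\omega_g(f)} f, \, h \right)$. Hence $\nabla E^1_f[\phi] = Q_g - \dfrac{Q_g(1)}{\omega_g(f)} f$, and the gradient flow $\partial_t \phi_t = -\left( Q_t - \dfrac{Q_t(1)}{\omega_t(f)} f \right)$ is exactly Equation \ref{nqf-phi}.

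I do not expect a genuine obstacle: the content is the first-variation computation above. Two points merit care. First, the defining relation $\mathbf{g}_\phi(\nabla E[\phi], \cdot) = dE[\phi](\cdot)$ only pins down $\nabla E[\phi]$ a priori as a distribution, so one must observe that the expressions found are honest smooth functions --- they are, since $P_{\mathrm{ref}} \phi$, $Q_{\mathrm{ref}}$, $Q_g$, and $f$ all are. Second, the ``musical'' identification is made with respect to the $\phi$-dependent metric $\mathbf{g}_\phi$, whose weight $e^{n\phi}$ must be matched against the weight $e^{-n\phi}$ produced by the conformal covariance of $Q$; this cancellation is exactly what makes $E^1_f$ and $E^2_f$ the right potentials, and is the one thing to get right. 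Finally, as noted in Remark \ref{volume-rmk}, I would emphasize that this is a formal gradient flow, since $\CM_0$ is not an honest Riemannian manifold.
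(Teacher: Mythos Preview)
Your proposal is correct and follows essentially the same approach as the paper: both compute the first variation of each term and identify the result as an $L^2(\omega_g)$ pairing (equivalently, a $\mathbf{g}_\phi$ pairing) to read off the gradient. The only cosmetic difference is that the paper writes out explicit $\eps$-expansions of $E^i_f[\phi+\eps h]-E^i_f[\phi]$ for each term, whereas you differentiate directly and package the common terms via the conformal covariance identity $Q_\mathrm{ref}+P_\mathrm{ref}\phi = e^{n\phi}Q_g$ a step earlier.
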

\begin{remark}
	At least for NQF this fact is well-known (\cite{Brendle2003}). Nevertheless we include the details here because, as noted in Remark \ref{volume-rmk}, it is important that the calculations do not rely on the fact that the flows preserve the total volume.
\end{remark}
\begin{proof}
	We compute the directional derivatives of $E^1_f$ and $E^2_f$ at a point $\phi_g \in C^\infty (M)$, which we think of as the conformal factor for a metric $g = e^{2\phi_g} g_\mathrm{ref}$, in the direction of $h \in C^\infty (M)$. Starting with the term which is quadratic in $\phi_g$, \[
		\int_M \frac{1}{2} (\phi_g +\eps h) P_\mathrm{ref} (\phi_g + \eps h) \, \omega_{\mathrm{ref}} - \int_M \frac{1}{2} \phi_g P_\mathrm{ref} \phi_g \, \omega_\mathrm{ref} = \eps \int_M h P_\mathrm{ref} \phi_g \, \omega_\mathrm{ref} + O(\eps^2) 
	\]
	using the self-adjointness of $P_\mathrm{ref}$. For the linear term, \[
	\int_M Q_\mathrm{ref} (\phi_g + \eps h) \, \omega_\mathrm{ref} - \int_M Q_\mathrm{ref} \phi_g \, \omega_\mathrm{ref} = \eps \int_M h Q_\mathrm{ref} \, \omega_\mathrm{ref} + O(\eps^2) \;.
	\]
	Next we handle the logarithmic term appearing in $E^1_f$: \begin{align*}
		&\log \left( \int_M e^{n (\phi_g + \eps h)} f \, \omega_\mathrm{ref} \right) - \log \left( \int_M e^{n \phi_g} f \, \omega_\mathrm{ref} \right) \\
		&= \log \left( 1 + \frac{\int_M e^{n \phi_g} (e^{\eps n h} - 1) f \, \omega_\mathrm{ref}}{\int_M e^{n \phi_g} f \, \omega_\mathrm{ref}} \right)  \\
		&= \log \left( 1 + \frac{\int_M e^{n \phi_g} (\eps n h + O(\eps^2)) f \, \omega_\mathrm{ref}}{\int_M e^{n \phi_g} f \, \omega_\mathrm{ref}} \right)  \\
		&= \frac{\int_M e^{n \phi_g} (\eps n h) f \, \omega_\mathrm{ref}}{\int_M e^{n \phi_g} f \, \omega_\mathrm{ref}} + O(\eps^2) \\
		&= \eps \left( n \frac{\int_M hf \, \omega_g}{\omega_g (f)} \right) + O(\eps^2) \;.
	\end{align*}
	These three computations are enough to find the directional derivative of $E^1_f$: \begin{align*}
		&\lim_{\eps \to 0} \frac{E^1_f [\phi_g + \eps h] - E^1_f [\phi_g]}{\eps} = \int_M h(P_\mathrm{ref} \phi_g + Q_\mathrm{ref}) \, \omega_\mathrm{ref} - \frac{Q_\mathrm{ref} (1)}{\omega_g (f)} \int_M hf \, \omega_g \\
		&= \int_M h Q_g \, \omega_g - \frac{Q_g (1)}{\omega_g (f)} \int_M hf \, \omega_g = \left\langle h, Q_g - \frac{Q_g (1)}{\omega_g (f)} f \right\rangle_{L^2 (\omega_g)}
	\end{align*}
	which (up to sign) matches \eqref{nqf-phi}.
	
	For the last term in $E^2_f$ we compute 	\begin{align*}
		&\int_M e^{n (\phi_g + \eps h)} f \, \omega_\mathrm{ref} - \int_M e^{n \phi_g} f \, \omega_\mathrm{ref} = \int_M e^{n \phi_g} (e^{\eps n h} - 1) f \, \omega_\mathrm{ref} \\
		&= \int_M e^{n \phi_g} (\eps n h + O(\eps^2)) f \, \omega_\mathrm{ref} = \eps \left( n \int_M hf \, \omega_g \right)+ O(\eps^2)
	\end{align*}
	so the directional derivative is \begin{align*}
		&\lim_{\eps \to 0} \frac{E^2_f [\phi_g + \eps h] - E^2_f [\phi_g]}{\eps} = \int_M h (P_\mathrm{ref} \phi_g + Q_\mathrm{ref}) \, \omega_\mathrm{ref} - \int_M hf \, \omega_g \\
		&= \int_M h (Q_g - f) \, \omega_g = \langle h, Q_g - f \rangle_{L^2 (\omega_g)}
	\end{align*}
	which (up to sign) matches \eqref{qf-phi}.
\end{proof}

\subsection{Langevin flow}

With NQF and LQF at hand, our next goal is to describe stochastic perturbations of them. This will involve adding a singular noise term to the respective equations for $\phi$, which means we no longer expect $\phi$ to be smooth at any fixed time. For now we will ignore these concerns and treat everything as though it is smooth; in Section \ref{conf-geo} we will define all of the relevant objects more precisely.

Let us first describe a general procedure for constructing a stochastic dynamic from a gradient flow. Let $(X, g)$ be a closed manifold and consider the gradient flow $(x_t)_{t \geq 0}$ with respect to a potential $V: X \to \R$. The generator for this flow is \[
	-\nabla_g V \cdot \nabla_g \;.
\]
The Langevin flow associated to $V$ is a stochastic perturbation with generator \[
	\tfrac{\sigma^2}{2} \Delta_g - \nabla_g V \cdot \nabla_g 		
\]
where $\sigma > 0$. 

When $X$ is finite-dimensional, this flow satisfies the SDE \[
	dx_t = -\nabla_g V (x_t) \, dt + \sigma \, dB_t
\] 
where $B$ is a Brownian motion on $(X, g)$. See \cite{Hsu02} for a precise interpretation of equations of this type on manifolds. Moreover, it has an invariant measure with density proportional to $e^{-2V / \sigma^2} \, d\omega$. 

Unfortunately, these two facts do not hold in general when $X$ is infinite-dimensional. However, we can still make sense of the differential equation and invariant measure in some cases. The strategy is to first define the measure by itself, then use Dirichlet form techniques to construct a process for which it is invariant (or sometimes just symmetrizing). Finally, one can show that this process solves the desired differential equation.

Let us now return to the setting of the $Q$ flow. The formal equation for the NQF Langevin flow is \begin{align}\label{stoch-nqf-phi}
	\partial_t \phi_t = - \left(Q_t - \frac{Q_t (1)}{\omega_t (f)} f \right) + \sigma \xi_t
\end{align}
where $\xi_t$ is a spacetime white noise. Here ``spatially white" means $\xi_t$ is white with respect to the metric $g_t$. For LQF the formal equation is \begin{align}\label{stoch-qf-phi}
	\partial_t \phi_t = -(Q_t - f) + \sigma \xi_t \;.
\end{align}
In what follows, when we say NQF or LQF we are referring to these stochastic flows rather than their deterministic counterparts. 

As in the deterministic setting, these equations yield corresponding equations for the metric and the volume form. For NQF these are \begin{align}\label{stoch-nqf}
	\partial_t g_t = -2 \left( Q_t - \frac{Q_t (1)}{\omega_t (f)} f \right) g_t + 2 \sigma \xi_t g_t \;,
\end{align}
\begin{align}\label{stoch-nqf-vol}
	\partial_t \omega_t = -n \left( Q_t - \frac{Q_t (1)}{\omega_t (f)} f \right) \omega_t + n \sigma \xi_t \omega_t \;.
\end{align}
For LQF they are \begin{align}\label{stoch-qf}
	\partial_t g_t = -2 \left( Q_t - f \right) g_t + 2 \sigma \xi_t g_t \;,
\end{align}
\begin{align}\label{plain-stoch-qf-vol}
	\partial_t \omega_t = -n \left( Q_t - f \right) \omega_t + n \sigma \xi_t \omega_t \;.
\end{align}

As previously mentioned, in this section we treat everything as though it is smooth so we do not worry about the meaning of products like $\xi_t g_t$. For technical reasons related to the Dirichlet form methods we employ, we mostly focus on the volume form equations \eqref{stoch-nqf-vol} and \eqref{plain-stoch-qf-vol}. In fact, we will see in Section \ref{conf-GMC} that there is an equivalence between $\phi_t$ and $\omega_t$ which allows us to pass between solutions to these equations.

Let us also record the formal expression that we expect to see for the invariant measures of these Langevin flows. Denote by $\CM$ the space of nonzero positive finite Borel measures on $M$ with the topology of weak convergence. Note that $\CM$ is homeomorphic to $\CP \times (0, \infty)$, where $\CP$ is the space of Borel probability measures on $M$, so in particular $\CM$ is locally compact. From the functional $E^1_f$, we see that for NQF we expect an invariant measure on $\CM$ with formal density proportional to \begin{align}\label{nqf-measure}
	\omega (f)^{2 Q_\mathrm{ref} (1) / (n \sigma^2)} \exp \left( - \sigma^{-2} \omega_\mathrm{ref} (\phi P_\mathrm{ref} \phi + 2 Q_\mathrm{ref} \phi) \right) \, \omega_\mathbf{g} (d\omega)
\end{align}
where $\phi$ is the conformal factor corresponding to $\omega$, and $\omega_\mathbf{g}$ is thought of as a volume form on $\CM$ associated to the Calabi metric. In reality, the volume form $\omega_\mathbf{g}$ does not exist since $\CM$ is infinite-dimensional, so we will need a way to precisely interpret this measure. The corresponding measure for LQF has formal density proportional to \begin{align}\label{plain-qf-measure}
	\exp \left( - \sigma^{-2} \omega_\mathrm{ref} (\phi P_\mathrm{ref} \phi + 2 Q_\mathrm{ref} \phi) + 2 (n \sigma^2)^{-1} \omega (f) \right) \, \omega_\mathbf{g} (d\omega) \;.
\end{align}

Since NQF has a more intrinsic normalization than LQF, our primary motivation for studying NQF is to construct a natural stochastic analog of the normalized $Q$ flow. However, LQF also has an important purpose. We will see that it is closely linked to the Polyakov-Liouville measures for even-dimensional manifolds studied in \cite{DHKS}. In order to fully explore this connection, we will need to slightly generalize the LQF equation. We consider the adjusted volume form equation \begin{align}\label{stoch-qf-vol}
	\partial_t \omega_t = -n \left(P_\mathrm{ref} \phi_t + \rho Q_\mathrm{ref}  \right) \omega_\mathrm{ref} + nf \omega_t + n \sigma \xi_t \omega_t
\end{align}
where $\rho \geq 1$ is a new parameter. Note that if $\rho = 1$ then this is the same as \eqref{plain-stoch-qf-vol}, as all we have done is apply \eqref{q-conformal-inv}. The corresponding invariant measure should be 
\begin{align}\label{qf-measure}
	\exp \left( - \sigma^{-2} \omega_\mathrm{ref} (\phi P_\mathrm{ref} \phi + 2\rho Q_\mathrm{ref} \phi) + 2 (n \sigma^2)^{-1} \omega (f) \right) \, \omega_\mathbf{g} (d\omega) \;.
\end{align}
We will discuss the role of $\rho$ and its connection to Liouville quantum gravity in Section \ref{lqg}.

\subsection{Weak solutions}

In order to state our main result, we must define a notion of weak solution for the NQF and LQF volume form equations. We interpret weak solutions in the usual PDE sense, meaning a solution must satisfy one-dimensional projected equations obtained by pairing the original equation with smooth test functions. Pairing \eqref{stoch-nqf-vol} with some $h \in C^\infty (M)$ yields  \begin{equation}\label{nqf-sde}
\begin{aligned}
	d\omega_t (h) = &-n \left( \omega_\mathrm{ref} (h P_\mathrm{ref} \phi_t + Q_\mathrm{ref} h) - \frac{Q_t (1)}{\omega_t (f)} \omega_t (fh) \right) \, dt \\
	&+ n \sigma \| h\|_{L^2 (\omega_t)} \, dB_t
\end{aligned}
\end{equation}
where $B$ is a standard Brownian motion which can be different for different choices of $h$. Here we use the Itô isometry to rewrite $\omega_t (\xi_t h) \, dt$ as $\| h \|_{L^2 (\omega_t)} \, dB_t$. Doing the same to \eqref{stoch-qf-vol} gives \begin{align}\label{qf-sde}
	d\omega_t (h) = -n ( \omega_\mathrm{ref} (h P_\mathrm{ref} \phi_t + \rho Q_\mathrm{ref} h) - \omega_t (fh))  \, dt + n \sigma \| h\|_{L^2 (\omega_t)} \, dB_t \;.
\end{align}

$\CM$ is almost a suitable choice for the state space of a weak solution, but we must modify it slightly to account for the fact that the process could be killed by shrinking to zero or blowing up to infinity in finite time. Augment $\CM$ by adding a cemetery state $\delta$ to form $\CM_\delta = \CM \cup \{ \delta \}$. Let $\CB$ be the Borel $\sigma$-algebra on $\CM$. Following \cite{FOT11}, we say a quadruple $\mathbf{M} = (\Omega, \CF, (\omega_t)_{t \in [0, \infty]}, (P_z)_{z \in \CM_\delta})$ is a Markov process on $(\CM, \CB)$ with time parameter $t \in [0, \infty]$ if: \begin{enumerate}
	\item $P_z (\omega_t \in B)$ is $\CB$-measurable in $z \in \CM$ for each $t \geq 0$ and $B \in \CB$.
	\item There exists a filtration $\CF_t$ such that $\omega_t \in \CF_t$ and $P_z (\omega_{t+s} \in B \vert \CF_t) = P_{\omega_t} (\omega_s \in B)$ $P_z$-almost surely for all $z \in \CM$, $t, s \geq 0$, and $B \in \CB$. 
	\item $P_z (\omega_0 = z) = 1$ for all $z \in \CM$.
	\item $\omega_\infty (\tau) = \delta$ for all $\tau \in \Omega$.
	\item $P_\delta (\omega_t = \delta) = 1$ for all $t \geq 0$.
\end{enumerate}

\begin{definition}\label{weak-sol}
	A Markov process $(\Omega, \CF, (\omega_t)_{t \in [0, \infty]}, (P_z)_{z \in \CM_\delta})$ with state space $\CM$ is a \emph{weak solution to NQF (resp. LQF)} if for almost every initial condition $z \in \CM$, $(\omega_t)_{t \in [0, \infty]}$ satisfies \eqref{nqf-sde} (resp. \eqref{qf-sde}) under $P_z$ for all $h \in C^\infty (M)$.
\end{definition}

The meaning of ``almost every $z \in \CM$" is not yet clear, and will be made precise via the proof of the main theorem in Section \ref{dirichlet}.

For LQF, we will need a slightly stronger version of the condition (A2): \begin{align*}
	\text{(A2') $Q_g (1) < \rho^{-1} Q_r (1)$, where $g_r$ is the round metric on the sphere $S^n$.}
\end{align*}

Since Q flow and Ricci flow coincide in two dimensions, one can express the result of \cite{DS22} as showing the existence of a weak solution to LQF when $n=2$ and the prescribing function $f$ is $\overline{Q_\mathrm{ref}}$. Our main result generalizes this: \begin{theorem}\label{main-theorem}
	Let $(M, g)$ be a closed, locally conformally flat manifold of even dimension $n$ such that conditions (A1) and (A2) are satisfied. Choose $g_\mathrm{ref}$ in the conformal class of $g$ so that $Q_\mathrm{ref}$ is constant, and suppose $\sigma^2 < 2n^{-1} (4\pi)^{n/2} (n/2-1)!$. Then for any positive $f \in C^\infty (M)$, there exists a weak solution to NQF with prescribed $Q$-curvature $f$. If instead $f \leq 0$ and condition (A2) is replaced by (A2'), then there exists a weak solution to LQF with prescribed $Q$-curvature $f$. 
\end{theorem}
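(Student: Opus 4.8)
The plan is to construct the invariant measures \eqref{nqf-measure} and \eqref{qf-measure} rigorously, then build the associated Dirichlet forms and extract weak solutions from them. The first step is to make sense of the formal densities. The quadratic form $\omega_{\mathrm{ref}}(\phi P_{\mathrm{ref}} \phi)$ is the Dirichlet energy of the co-polyharmonic operator $P_{\mathrm{ref}}$, which by (A1) is positive semi-definite with one-dimensional kernel; splitting off the constant mode, the Gaussian measure with covariance $\sigma^2 P_{\mathrm{ref}}^{-1}$ on the orthogonal complement is a well-defined log-correlated Gaussian field on $M$ (this is the even-dimensional analog of the 2D Gaussian free field, and is exactly the base field used in \cite{DHKS}). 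Call this field $\Phi$ and its law $\mu_{\mathrm{GFF}}$. The term $\omega(f) = \int_M e^{n\phi} f \, \omega_{\mathrm{ref}}$ must be interpreted as a Gaussian multiplicative chaos integral: since $\Phi$ is log-correlated, $e^{n\Phi}\omega_{\mathrm{ref}}$ is defined as a GMC measure, which converges in the subcritical regime. The constraint $\sigma^2 < 2n^{-1}(4\pi)^{n/2}(n/2-1)!$ is precisely the condition that the relevant GMC exponent lies below the $L^1$/subcriticality threshold — this is where the peculiar dimensional constant enters, coming from the short-distance logarithm coefficient of the Green's function of $P_{\mathrm{ref}}$ in dimension $n$. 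For NQF one then tilts $\mu_{\mathrm{GFF}}$ by the density $\omega(f)^{2Q_{\mathrm{ref}}(1)/(n\sigma^2)}$, and for LQF by $\exp(2(n\sigma^2)^{-1}\omega(f))$; one must check these densities are integrable. For NQF the exponent $2Q_{\mathrm{ref}}(1)/(n\sigma^2)$ can be positive (recall $Q_{\mathrm{ref}}$ is constant, and its sign is governed by (A2) vs. the sphere), so finiteness of a positive moment of the GMC total mass is needed — again controlled by subcriticality; for LQF the sign of $f \leq 0$ makes $\exp(2(n\sigma^2)^{-1}\omega(f))$ bounded by $1$, which is why the sign hypothesis on $f$ differs between the two flows, and (A2') is the sharpened bound ensuring the tilted measure is still a probability measure after the $\rho$-shift in the linear term.

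The second step is to set up the Dirichlet form. Working on the Cameron–Martin-type space associated to $P_{\mathrm{ref}}$, define the pre-Dirichlet form $\mathcal{E}(F,G) = \tfrac{\sigma^2}{2}\int \langle DF, DG\rangle \, d\nu$ for cylinder functionals $F,G$, where $\nu$ is the NQF (resp. LQF) measure and $D$ is the gradient in the directions of the Calabi metric $\mathbf{g}$ — concretely, $D_h F[\phi] = \frac{d}{d\eps}F[\phi + \eps h]|_{\eps=0}$ with the $L^2(\omega_\phi)$ pairing built in, so that the reference volume in the chaos measure provides the correct Riemannian structure. One shows this form is closable on $L^2(\CM, \nu)$; closability follows from an integration-by-parts formula for $\nu$, which is exactly where the drift $-n(P_{\mathrm{ref}}\phi_t + \rho Q_{\mathrm{ref}})\omega_{\mathrm{ref}} + nf\omega_t$ appears — differentiating the Gaussian part gives $P_{\mathrm{ref}}\phi$, differentiating the GMC density gives the $f\omega_t$ term, and the $\rho$ appears from the $\rho Q_{\mathrm{ref}}$ in the exponent of \eqref{qf-measure}. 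The Cameron–Martin quasi-invariance of $\mu_{\mathrm{GFF}}$ under shifts plus differentiability of the GMC density in those directions (a standard GMC fact in the subcritical range) yields the IBP identity and hence closability and quasi-regularity. By the general theory of \cite{FOT11} this produces a $\nu$-symmetric Hunt process on $\CM_\delta$, with the cemetery state absorbing trajectories that are killed in finite time (GMC mass to $0$ or $\infty$).

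The third step is to identify this process as a weak solution, i.e. to verify the projected SDEs \eqref{nqf-sde} and \eqref{qf-sde}. This is done by applying the Fukushima decomposition to the coordinate functionals $\omega \mapsto \omega(h)$ for $h \in C^\infty(M)$: the additive functional $\omega_t(h) - \omega_0(h)$ splits as a martingale part plus a part of zero energy, and one computes that the martingale part has quadratic variation $n^2\sigma^2 \|h\|_{L^2(\omega_t)}^2 \, dt$ (from the carré-du-champ of $\omega\mapsto\omega(h)$ against the Calabi metric, which is exactly $\int_M h^2 e^{n\phi}\omega_{\mathrm{ref}} = \|h\|_{L^2(\omega_\phi)}^2$) while the bounded-variation part is the drift read off from the generator, matching the right-hand sides of \eqref{nqf-sde}–\eqref{qf-sde}. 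Representing the martingale part via a single Brownian motion by Lévy's characterization (after a time change if necessary, using that the quadratic variation is a continuous additive functional) completes the identification; the "almost every initial condition" in Definition \ref{weak-sol} is then interpreted as: for $\nu$-almost every $z \in \CM$, which is the natural exceptional-set notion from Dirichlet form theory. Finally one notes the equivalence between the $\phi$, $g$, and $\omega$ formulations (deferred to Section \ref{conf-GMC}) so that a solution of the volume-form equation gives solutions of the others.

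The main obstacle I expect is the GMC integrability and the sharp constant. Everything downstream — closability via integration by parts, the tilting being a genuine probability density, the Fukushima decomposition producing finite-variance martingales — hinges on the GMC measure $e^{n\Phi}\omega_{\mathrm{ref}}$ being in the subcritical regime and, for NQF, on the total mass having a sufficiently high positive moment to absorb the power $\omega(f)^{2Q_{\mathrm{ref}}(1)/(n\sigma^2)}$. Pinning down that $\sigma^2 < 2n^{-1}(4\pi)^{n/2}(n/2-1)!$ is exactly the right threshold requires computing the leading logarithmic singularity of the Green's function $G_{P_{\mathrm{ref}}}(x,y)$ of the order-$n$ co-polyharmonic operator — the constant $(4\pi)^{n/2}(n/2-1)!$ is the normalization of that logarithm — and then matching it against the $L^1$ (or appropriate $L^p$) phase boundary $\gamma^2 < 2d$-type criterion for chaos in the log-correlated setting, here with "dimension" $n$ and coupling $n\sigma$. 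Controlling the GMC near the critical exponent, and handling the interplay between the positive NQF exponent and the Seiberg-type bound (A2) on $Q_{\mathrm{ref}}(1)$ that guarantees the needed moment exists, is the technical heart of the argument; the Dirichlet-form machinery, once the measure is in hand, is comparatively routine.
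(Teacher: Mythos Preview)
Your proposal is correct and follows essentially the same route as the paper: rigorously construct the symmetrizing measures via the co-polyharmonic Gaussian field and its GMC (with the $\sigma$-bound as the subcriticality threshold and (A2)/(A2') as the moment condition), prove an integration-by-parts formula to obtain closability of the Dirichlet form in the Calabi $L^2(\omega)$ geometry, and then use the Fukushima decomposition to match the martingale and drift parts of $\omega_t(h)$ with the projected SDEs. One small correction: the measures $\nu_{\mathrm{NQF}}$ and $\nu_{\mathrm{LQF}}$ are only $\sigma$-finite (not probability measures) --- the constant mode is integrated against Lebesgue measure --- so the Dirichlet form theory is applied in the $\sigma$-finite setting, and ``almost every initial condition'' means quasi-everywhere with respect to this $\sigma$-finite symmetrizing measure.
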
 

As an immediate application of the theorem and the definition of a weak solution with $h=1$, we obtain the following corollary.

\begin{corollary}\label{volume-corollary}
	Suppose all the assumptions of Theorem \ref{main-theorem} are satisfied. The total volume $V_t \coloneq \omega_t (1)$ of the weak solutions in the theorem satisfy the SDE \[
		dV_t = n \sigma \sqrt{V_t} \, dB_t
	\]
	for NQF and \[
		dV_t = -n (\rho Q_\mathrm{ref} (1) - \omega_t (f)) \, dt + n \sigma \sqrt{V_t} \, dB_t
	\]
	for LQF. In the particular case where $Q_\mathrm{ref} \leq 0$ and $f = Q_\mathrm{ref}$, the LQF volume equation is \[
		dV_t = -n Q_\mathrm{ref} (1) \left(\rho - \tfrac{V_t}{V_\mathrm{ref}}\right) \, dt + n \sigma \sqrt{V_t} \, dB_t \;.
	\]
\end{corollary}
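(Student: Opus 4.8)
The plan is to specialize the weak-solution identities to the constant test function $h \equiv 1$ and read off the resulting one-dimensional equation for $V_t = \omega_t(1)$. First, since $\|1\|_{L^2(\omega_t)} = \omega_t(1)^{1/2} = \sqrt{V_t}$, the martingale term in both Equation \ref{nqf-sde} and Equation \ref{qf-sde} becomes $n\sigma\sqrt{V_t}\,dB_t$, which is the noise coefficient appearing in all three displayed equations. It then remains only to compute the drift, and the sole geometric inputs needed are the two abstract properties of $P_\mathrm{ref}$ recorded in Section \ref{q-curv}: it is symmetric on $L^2(\omega_\mathrm{ref})$ and it annihilates constants. These give $\omega_\mathrm{ref}(1 \cdot P_\mathrm{ref}\phi_t) = \omega_\mathrm{ref}(\phi_t \cdot P_\mathrm{ref} 1) = 0$, so the $P_\mathrm{ref}\phi_t$ contribution to the drift vanishes in both flows.

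For NQF, setting $h = 1$ in Equation \ref{nqf-sde} leaves $-n\bigl(\omega_\mathrm{ref}(Q_\mathrm{ref}) - \tfrac{Q_t(1)}{\omega_t(f)}\omega_t(f)\bigr)\,dt = -n(Q_\mathrm{ref}(1) - Q_t(1))\,dt$, which is zero by the conformal invariance $Q_t(1) = Q_\mathrm{ref}(1)$ established in Section \ref{q-curv}; hence $dV_t = n\sigma\sqrt{V_t}\,dB_t$. For LQF, setting $h = 1$ in Equation \ref{qf-sde} leaves the drift $-n(\rho\,\omega_\mathrm{ref}(Q_\mathrm{ref}) - \omega_t(f))\,dt = -n(\rho Q_\mathrm{ref}(1) - \omega_t(f))\,dt$; here conformal invariance of $Q(1)$ plays no role, consistent with the fact that LQF need not preserve $Q_t(1)$. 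For the stated special case, $Q_\mathrm{ref} \leq 0$ and $f = Q_\mathrm{ref}$ is a negative constant, so $\omega_t(f) = Q_\mathrm{ref}\,V_t$ and, since $Q_\mathrm{ref}$ is constant, $Q_\mathrm{ref} = Q_\mathrm{ref}(1)/V_\mathrm{ref}$ with $V_\mathrm{ref} := \omega_\mathrm{ref}(1)$. Substituting and factoring $Q_\mathrm{ref}(1)$ out of the drift gives $dV_t = -nQ_\mathrm{ref}(1)(\rho - V_t/V_\mathrm{ref})\,dt + n\sigma\sqrt{V_t}\,dB_t$.

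There is no substantive obstacle: the corollary is an immediate consequence of Definition \ref{weak-sol}, and the only care needed is in tracking the two facts about $P_\mathrm{ref}$ and (for NQF) the conformal invariance of $Q(1)$. As with Theorem \ref{main-theorem}, the identities hold for $P_z$-almost every initial volume $z \in \CM$ and up to the killing time of the process. Finally, as a consistency check against the terminology of the abstract, one may note that $dV_t = n\sigma\sqrt{V_t}\,dB_t$ is, after a linear time change, a squared Bessel process of dimension $0$ (absorbed at $0$, matching the ``shrinking to zero'' killing), while the LQF volume equation with $f$ constant and $Q_\mathrm{ref}(1) < 0$ is a mean-reverting CIR (Feller) diffusion with mean-reversion level $\rho V_\mathrm{ref}$.
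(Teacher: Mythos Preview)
Your proof is correct and follows the same approach as the paper: set $h=1$ in the weak-solution SDEs \ref{nqf-sde} and \ref{qf-sde}, use that $P_\mathrm{ref}$ annihilates constants to kill the $P_\mathrm{ref}\phi_t$ term, and invoke the conformal invariance $Q_t(1)=Q_\mathrm{ref}(1)$ for NQF. Your write-up is in fact more detailed than the paper's, which dispatches the corollary in a single sentence; the one minor imprecision is the remark that ``LQF need not preserve $Q_t(1)$''---in fact $Q_t(1)$ is always a conformal invariant, it simply does not appear in Equation \ref{qf-sde}---but this does not affect the argument.
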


The processes we construct in the proof of Theorem \ref{main-theorem} will always be symmetric with respect to the associated measure \eqref{nqf-measure} or \eqref{qf-measure}. These measures will not always be invariant with respect to the corresponding process, but we can show that the measure for LQF is invariant under some additional assumptions:

\begin{corollary}\label{invariant-measure}
	Suppose all the assumptions of Theorem \ref{main-theorem} are satisfied. If $Q_\mathrm{ref} < 0$, $f = Q_\mathrm{ref}$, and $\sigma^2 \leq -2Q_\mathrm{ref} (1) / n$, then the weak solution to LQF constructed in the proof of Theorem \ref{main-theorem} has an invariant measure with formal density given by \eqref{qf-measure}. In the special case where $\rho = 1 + \frac{n \sigma^2}{2 (n/2 - 1)! (4\pi)^{n/2}}$, this invariant measure is the conformally quasi-invariant adjusted Polyakov-Liouville measure constructed in \cite{DHKS}.
\end{corollary}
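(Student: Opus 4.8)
The plan is to split the corollary into two parts: (i) upgrading the symmetry of the process constructed in the proof of Theorem \ref{main-theorem} to genuine invariance of the measure \ref{qf-measure}, and (ii) recognizing that measure, for the distinguished value $\rho = 1 + a_n n\sigma^2/4$, as the adjusted Polyakov--Liouville measure of \cite{DHKS}.

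For (i), recall that the LQF weak solution is the Markov process associated to a Dirichlet form symmetric with respect to the measure $\mu$ with formal density \ref{qf-measure}, and that in the LQF regime $f \le 0$ the cosmological term $2(n\sigma^2)^{-1}\omega(f)$ is nonpositive, so $\mu$ is a finite measure. For a $\mu$-symmetric Markov process with $\mu$ finite, $\mu$ is invariant if and only if the process is conservative, i.e. never hits the cemetery $\delta$ in finite time; thus the first half of the corollary reduces to a conservativeness statement. The strategy is to argue that the \emph{only} route to $\delta$ is a degeneration of the total volume $V_t = \omega_t(1)$, to $0$ or to $+\infty$, and then to control $V_t$ via Corollary \ref{volume-corollary}: the choice $f = \qref$ is what makes the total-volume equation autonomous (a genuine CIR process rather than one coupled to the entire field), $\qref < 0$ makes its mean reversion genuine so that $V_t$ cannot explode to $+\infty$, and the hypothesis $\sigma^2 \le -2\qref(1)$ is the Feller-type threshold ensuring $V_t$ cannot reach $0$. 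The step that reduces ``escape to $\delta$'' to ``degeneration of $V_t$'' is where the infinite-dimensional nature of the problem enters: one must show that the mean-zero part of the conformal factor is controlled by the Gaussian (Dirichlet-energy) part of $\mu$ and cannot escape, so that the only unstable degree of freedom is the zero mode, which runs off exactly when $V_t \to 0$ or $V_t \to \infty$; here one uses the equivalence between $\phi_t$, $g_t$, and $\omega_t$ from Section \ref{conf-GMC}. Granting this, $V_t \in (0,\infty)$ for all time almost surely, the process is conservative, and $\mu$ is invariant.

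For (ii), this is a comparison of explicit densities. The measure \ref{qf-measure} is the product of a Gaussian weight $\exp(-\sigma^{-2}\omega_\mathrm{ref}(\phi P_\mathrm{ref}\phi))$, a linear tilt $\exp(-2\rho\sigma^{-2}\qref\,\omega_\mathrm{ref}(\phi))$, and a cosmological factor $\exp(2(n\sigma^2)^{-1}\omega(f))$. I would write the adjusted Polyakov--Liouville measure of \cite{DHKS} in the same field variable, translate their coupling constant and cosmological constant into $\sigma$ and $f = \qref$, and observe that the only discrepancy is in the coefficient of the linear term; that coefficient is matched exactly by the choice $\rho = 1 + a_n n\sigma^2/4$, where $a_n$ is the conformal-anomaly constant tied to the normalization of the co-polyharmonic operator $P_\mathrm{ref}$ (the same normalization responsible for the factor $(4\pi)^{n/2}(n/2-1)!$ in Theorem \ref{main-theorem}). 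The leftover multiplicative constants do not affect the measure class, so the two measures coincide.

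The main obstacle is the conservativeness argument in (i): rigorously showing that degeneration of the scalar $V_t$ is the \emph{only} way the measure-valued process can reach the cemetery, and matching the sharp parameter threshold in that reduction to the boundary classification of the CIR volume process. If the elementary CIR Feller condition does not land exactly on $\sigma^2 \le -2\qref(1)$ for the normalizations at hand, I would instead deduce conservativeness from a general Dirichlet-form criterion (intrinsic-metric volume growth) applied to $(\CM, \mu)$. Part (ii), by contrast, is essentially bookkeeping once the conventions of \cite{DHKS} are aligned with ours.
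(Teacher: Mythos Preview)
Your overall plan matches the paper's: symmetry plus conservativeness gives invariance, conservativeness reduces to the total volume not hitting $0$ or $\infty$, and that is handled by the CIR Feller boundary classification from Corollary~\ref{volume-corollary}; part (ii) is a parameter identification with \cite{DHKS}. Two points deserve correction or simplification.

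First, the claim ``$f\le 0$ makes the cosmological term nonpositive, so $\mu$ is finite'' is both unnecessary and, as stated, not a valid argument: bounding the exponential cosmological factor by $1$ only dominates $\nuqf$ by the tilted ungrounded CGF, which is merely $\sigma$-finite. The paper does not invoke finiteness at all; for a $\sigma$-finite symmetrizing measure the implication ``conservative $\Rightarrow$ invariant'' is the standard Dirichlet-form fact (the Markovian semigroup extends to $L^\infty$ and $T_t 1 = 1$ suffices). So you can drop that paragraph.

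Second, you overcomplicate the step ``escape to $\delta$ happens only through degeneration of $V_t$''. You propose controlling the mean-zero part of $\phi$ via the Gaussian energy; the paper's argument is purely topological and much shorter. By strong locality the diffusion is not killed inside $\CM$, so reaching $\delta$ means the continuous path leaves every compact subset of $\CM$. But $M$ is compact, so by Prokhorov the sets $\{\omega\in\CM:\omega(1)\in[\eps,\eps^{-1}]\}$ are weakly compact; hence a continuous $\CM$-valued path escapes all compacts if and only if $V_t=\omega_t(1)$ leaves every interval $[\eps,\eps^{-1}]$. No control on the mean-zero part is needed. With this in hand the CIR Feller condition does land exactly on $\sigma^2\le -2\qref(1)$ (using $\rho\ge 1$), and your fallback to intrinsic-metric criteria is not required.
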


Let us briefly outline the remainder of the paper. In Section 2 we develop the definitions needed to analyze geometric objects in the (non-smooth) stochastic setting. We use this to give rigorous meaning to the symmetrizing measures for NQF and LQF. In Section 3 we prove integration-by-parts formulas for these measures. In Section 4 we use the theory of Dirichlet forms to construct processes associated to the measures. We then show that these processes are weak solutions to the NQF and LQF dynamics, proving Theorem \ref{main-theorem}. Finally, in Section 5 we prove Corollaries \ref{volume-corollary} and \ref{invariant-measure}, explore the connection between LQF and Liouville quantum gravity, and discuss the topological conditions needed for our results. A technical lemma needed for the constructions in Section 2 is proved in Appendix A. 

The core of the argument is similar to \cite{DS22} and other existence proofs using Dirichlet forms; the main novelties come from the technical geometric and topological difficulties that arise when working with random objects on higher-dimensional manifolds. This will be discussed further in the next section.

\begin{acknowledge}
	The author is very grateful to Hao Shen for suggesting the problem and for many helpful discussions, and to David Clancy for pointing out that the LQF volume dynamic is a CIR process. The author was supported in part by NSF grant DMS- 2037851.
\end{acknowledge}

\section{Construction of the symmetrizing measures}

The main goal of this section is to give rigorous meaning to measures with densities \eqref{nqf-measure} and \eqref{qf-measure}. To do this, we must first redefine several quantities in a general setting where $\phi$ is no longer assumed smooth. We will then give an overview of some canonical random objects associated to the manifold $M$: namely, co-polyharmonic Gaussian fields (CGFs) and co-polyharmonic Gaussian multiplicative chaos (CGMC) measures. Finally, we will use these objects to define the measures for NQF and LQF.

\subsection{Conformal geometry in the stochastic setting}\label{conf-geo}

Recall that in Section \ref{q-curv} we made use of the so-called co-polyharmonic operators $P_g$. The following proposition defines these operators along with some of their properties.

\begin{proposition}\label{copoly-ops}
	Let $M$ be a closed manifold of even dimension $n$. There is a family of operators $P_g: C^\infty (M) \to C^\infty (M)$, indexed by metrics on $M$, such that \begin{enumerate}[(i)]
		\item $P_g$ is a differential operator of order $n$. 
		\item The leading-order term of $P_g$ is $(-\Delta_g)^{n/2}$, and there is no zeroth-order term.
		\item If $g = e^{2\phi_g} g_\mathrm{ref}$ for $\phi_g \in C^\infty (M)$ then $P_g = e^{-n \phi_g} P_\mathrm{ref}$.
		\item $P_g$ is symmetric with respect to the $L^2 (\omega_g)$ inner product.
	\end{enumerate}
\end{proposition}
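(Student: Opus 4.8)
The plan is to identify the operators $P_g$ with the critical GJMS operators (equivalently, the critical co-polyharmonic operators), to read off (i)--(iii) from their construction, and then to transfer everything to an arbitrary metric in the conformal class of $\gref$ by a short change of variables. First I would invoke the construction of the conformally covariant powers of the Laplacian due to Graham, Jenne, Mason, and Sparling, extended to arbitrary even dimension by Branson \cite{Branson92} and presented in the form we need in \cite{DHKS}: for each $k$ with $2k \le n$ there is a natural differential operator $P^{2k}_g$ on $C^\infty(M)$ with leading term $(-\Delta_g)^k$ and lower-order coefficients built polynomially from the curvature of $g$ and its covariant derivatives. Taking $k = n/2$ — the critical case $2k = n$ — and writing $P_g$ for $P^n_g$ gives (i) directly, since this operator has order $n$. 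Property (ii) is also part of this construction: the leading term is $(-\Delta_g)^{n/2}$, and the critical operator has vanishing zeroth-order term. This last point is special to the critical order: the zeroth-order term of a subcritical $P^{2k}_g$ is a multiple of the subcritical $Q$-curvature, but at $2k = n$ that term degenerates, and what would have been a zeroth-order coefficient instead reappears — through the \emph{inhomogeneous} transformation law \ref{q-conformal-inv} — as the $Q$-curvature $Q_g$. As a sanity check one recovers $P_g = -\Delta_g$ when $n = 2$ and the Paneitz operator when $n = 4$, consistent with the formula for $Q$ in four dimensions quoted earlier.

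Next, property (iii). The GJMS operators obey the covariance rule $P^{2k}_{e^{2w}g}(u) = e^{-(n/2 + k)w}\,P^{2k}_g\!\left(e^{(n/2 - k)w}u\right)$ for all $u, w \in C^\infty(M)$. In the critical case the two conformal weights collapse to $n/2 + k = n$ and $n/2 - k = 0$, so the rule becomes simply $P^n_{e^{2w}g}(u) = e^{-nw}P^n_g(u)$; with $w = \phi_g$ this is exactly $P_g = e^{-n\phi_g}P_\mathrm{ref}$. Besides establishing (iii), this shows that once $P_\mathrm{ref}$ is fixed the family $(P_g)$ over the conformal class of $\gref$ is determined with no compatibility obstruction, and for metrics outside that class one applies the GJMS construction directly (every metric relevant to the paper lies in the class of $\gref$).

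Finally, (iv). Formal self-adjointness of the GJMS operators with respect to the Riemannian volume is part of the standard theory — it can be obtained from the Fefferman--Graham ambient metric or from the Graham--Zworski scattering construction — so $P_\mathrm{ref}$ is symmetric on $L^2(\omega_\mathrm{ref})$, and its non-negativity with kernel equal to the constants is precisely the standing hypothesis (A1). To pass to a general $g = e^{2\phi_g}\gref$ in the class I would use (iii): for $u, v \in C^\infty(M)$,
\[
	\int_M u\,(P_g v)\,\omega_g \;=\; \int_M u\,e^{-n\phi_g}(P_\mathrm{ref} v)\,e^{n\phi_g}\,\omega_\mathrm{ref} \;=\; \int_M u\,(P_\mathrm{ref} v)\,\omega_\mathrm{ref},
\]
which is symmetric in $u$ and $v$, equals $\int_M u\,(P_\mathrm{ref} u)\,\omega_\mathrm{ref} \ge 0$ when $v = u$, and vanishes for $v = u$ only when $u$ is constant. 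Thus self-adjointness, non-negativity, and the characterization of the kernel all transfer for free from $\gref$ to every metric in its conformal class.

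\textbf{The main obstacle} is that the genuine content — existence of operators satisfying (i)--(iii), in particular the vanishing zeroth-order term at critical order and the precise covariance weights, together with formal self-adjointness — is imported from the GJMS/ambient-metric literature rather than proved here; correctly assembling those inputs is the real work, whereas the conformal transfer in the last step is routine. The one point demanding care is that ``non-negative'' in (iv) is \emph{not} automatic for GJMS operators on a general closed manifold: it is exactly where the standing assumption (A1) enters.
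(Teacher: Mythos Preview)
Your proposal is correct and aligns with the paper's approach: the paper does not prove this proposition at all but simply cites \cite{CJMS92} and \cite{DHKS} for the construction and properties, so what you have written is in fact more detailed than what appears in the paper. Your observation that non-negativity in (iv) is \emph{not} a general property of critical GJMS operators but rather enters via the standing hypothesis (A1) is exactly right and is a point the paper leaves implicit in the statement of the proposition.
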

Recall that metrics we work with in this paper are conformally equivalent to $g_\mathrm{ref}$, which satisfies condition (A1), so each $P_g$ is also non-negative with kernel given by the set of constant functions. These operators were originally constructed in \cite{CJMS92}. See Section 1.2 of \cite{DHKS} and the references therein for proofs of these properties as well as examples for some specific manifolds. Since $P_g$ is symmetric, it has a canonically defined Friedrichs extension which turns it into a non-negative self-adjoint operator on $L^2 (\omega_g)$.

Since the random objects we will consider are not smooth (and in fact may only be distribution-valued), we need a notion of regularity which respects the conformal geometry of $(M, g)$. We will use the following modified Sobolev spaces defined in \cite{DHKS}:

\begin{definition}
	For $s \geq 0$, the \emph{usual Sobolev space} on $(M, g)$ is $\CH^s_g = (1 - \Delta_g)^{-s/2} L^2 (\omega_g)$ with norm $\|(1 - \Delta_g)^{s/2} (\cdot ) \|_{L^2 (\omega_g)}$. For $s < 0$, it is the completion of $L^2 (\omega_g)$ with respect to the same norm. On the other hand, for $s \geq 0$ the \emph{co-polyharmonic Sobolev space} on $(M, g)$ is $H^s_g = (1 + p_g)^{-s/n} L^2 (\omega_g)$, where $p_g = a_n P_g$ is the normalized co-polyharmonic operator with normalizing constant $a_n = \tfrac{2}{(n/2-1)! (4\pi)^{n/2}}$. It has norm $\| (1 + p_g)^{s/n} (\cdot) \|_{L^2 (\omega_g)}$. If $s < 0$ then $H^s_g$ is the completion of $L^2 (\omega_g)$ with respect to the same norm. 
	
	We denote by $\mathring{\CH}^s_g$ and $\mathring{H}^s_g$ the corresponding \emph{grounded Sobolev spaces}, the subspaces of elements with zero $\omega_g$-mean. 
\end{definition}

It turns out that $\CH^s_g$ and $H^s_g$ are very similar spaces. Indeed, for any $s \in \R$ they are equal as sets and their norms are equivalent (\cite{DHKS}, Lemma 2.15). In particular, this implies that $C^\infty (M)$ is dense in $H^s_g$ for all $s \in \R$. Furthermore, from the definition of the co-polyharmonic Sobolev space, $P_g$ can be defined for any $s \in \R$ as a bounded operator from $H^s_g$ to $H^{s-n}_g$.

With these operators defined, we can now make sense of $Q$-curvature in the case where the conformal factor $\phi$ is not smooth. A standard setup is that we have a random measure $\omega_t$ and an associated conformal factor $\phi_t$ such that $\phi_t \in \mathring{H}^{-\eps}_\mathrm{ref}$ almost surely for all $\eps > 0$. In this case, we would like a definition for quantities of the form $Q_t (h)$ where $h$ is sufficiently smooth.

Even in low dimensions, the explicit formula for $Q$-curvature is now an insufficient definition because we haven't defined an extension of the Ricci curvature tensor to this setting. However, we can still use the conformal quasi-invariance of $Q$ to extend the definition. Recall that if $g = e^{2 \phi_g} g_\mathrm{ref}$ for a smooth $\phi$ then \[
	Q_g = e^{-n \phi_g} (Q_\mathrm{ref} + P_\mathrm{ref} \phi_g) \;.
\] 
Multiplying by a smooth function $h$ and integrating against $\omega_g$ gives \[
	Q_g (h) = \omega_\mathrm{ref} (Q_\mathrm{ref} h + \phi_g P_\mathrm{ref} h) \;.
\]
This formula still makes sense when $\phi$ is not smooth. Even when $\phi_g$ has regularity ``just below zero" as above, the right-hand side still makes sense provided $h \in H^s_\mathrm{ref}$ for some $s > n$. Therefore, this formula specifies $Q_g$ as an element of $H^{-n-\eps}_\mathrm{ref}$ for any $\eps > 0$. From here on we treat this as the definition of $Q_g$. Observe that with this definition, we still have the property that $Q_g (1) = Q_\mathrm{ref} (1)$ whenever $\phi_g$ has the above regularity because $P_\mathrm{ref} (1) = 0$.

\subsection{Canonical co-polyharmonic Gaussian objects}\label{conf-GMC}

Next we will see how to make sense of the equation $\omega_g = e^{n \phi_g} \omega_\mathrm{ref}$ when $\phi_g$ has low regularity, as well as how to recover $\phi_g$ from $\omega_g$ when this equation holds. We start by defining a conformally quasi-invariant analog of a log-correlated field on $M$.

Let us note a few more properties of the normalized co-polyharmonic operators $p_g$; see Section 2 of \cite{DHKS} for proofs. When viewed as an operator from $\mathring{H}^{n+s}_g$ to $\mathring{H}^s_g$ for some $s \in \R$, $p_g$ is bounded with bounded inverse. When $s=0$, the inverse has a symmetric integral kernel $k_g$ called the co-polyharmonic Green kernel. It has logarithmic growth near the diagonal: \[
	\left\lvert k_g (x, y) - \log \frac{1}{d_g (x, y)} \right\rvert \leq C
\]
uniformly over $x, y \in M$, where $C$ depends only on $M$ and $g$. We will choose $k_g$ as the covariance kernel for a random field on $M$.

\begin{definition}[\cite{DHKS}, Section 3]\label{CGF}
	Let $s > 0$ and let $(M, g)$ satisfy (A1). A \emph{co-polyharmonic Gaussian field (CGF)} on $(M, g)$ is a centered Gaussian distribution $\psi$ in $\mathring{H}^{-s}_g$ with covariance \[
		\E [(\psi, u) (\psi, v)] = \int_M \int_M k_g (x, y) u(x) v(y) \, \omega_g (dx) \, \omega_g (dy)
	\]
	for all $u, v \in \mathring{H}^s_g$. Such a field exists and is unique in distribution. Moreover, the choice of $s$ does not matter, since any two such fields with different choices of $s$ have the same distribution. Denote the law of a CGF on $(M, g)$ by $\mu_g$. 
\end{definition}

Tensoring $\mu_g$ with Lebesgue measure on $\R$ then taking the pushforward under the map $(\phi, c) \mapsto \phi + c$ yields a $\sigma$-finite measure $\tilde{\mu}_g$, the ``law" of an ungrounded CGF on $(M, g)$. An advantage of $\tilde{\mu}_g$ is that it is conformally invariant, i.e. it does not depend on the choice of metric within a conformal class (\cite{DHKS}, Proposition 3.16). 

Now we consider the expressions $e^{\gamma \phi_g} \omega_\mathrm{ref}$ when $\gamma \in \R$ and $\phi_g \in \mathring{H}^{-\eps}_\mathrm{ref}$ for all $\eps > 0$. These are co-polyharmonic analogs of the Gaussian multiplicative chaos (GMC) measures originally studied by Kahane (\cite{Kahane85}, see also \cite{BP24} for an introduction). For $r \in \R$, let $\mathring{H}^{r-}_g$ denote the set of distributions which are in $\mathring{H}^{r-s}_g$ for all $s > 0$. For example, a CGF lies in $\mathring{H}^{0-}_g$ almost surely.

\begin{proposition}[\cite{DHKS} Theorem 4.1]\label{CGMC}
Suppose $\gamma \in [0, \sqrt{2n})$ and $(M, g)$ satisfies (A1). There is a measurable map \[
	M^\gamma_g: \mathring{H}^{0-}_g \to \CM
\]
satisfying the following properties: \begin{enumerate}[(i)]
	\item For $\mu_g$-a.e. $\psi$ and every $h \in \mathring{H}^{n/2}_g$, $M^\gamma_g (\psi+h) = e^{\gamma h} M^\gamma_g (\psi)$.
	\item For all $p \in (-\infty, \frac{2n}{\gamma^2})$, $\E_{\mu_g} [(M^\gamma_g (\psi) (1))^p] < \infty$.
\end{enumerate}
$M^\gamma_g (\psi)$ is called a \emph{co-polyharmonic Gaussian multiplicative chaos (CGMC) measure}. The map $M^\gamma_g$ extends to $H^{0-}_g$ by defining $M^\gamma_g (\psi + c) = e^{\gamma c} M^\gamma_g (\psi)$ for any constant $c$.
\end{proposition}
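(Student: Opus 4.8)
The plan is to obtain $M^\gamma_g(\psi)$ as a limit of mollified, Wick-renormalized approximations, following the standard construction of Gaussian multiplicative chaos specialized to the co-polyharmonic setting of \cite{DHKS}. Fix an orthonormal basis $(e_j)_{j \ge 1}$ of $\mathring{H}^0_g$ made of eigenfunctions of $p_g$, with eigenvalues $\lambda_j \uparrow \infty$, so that a CGF is realized as $\psi = \sum_j \lambda_j^{-1/2} X_j e_j$ with $(X_j)$ i.i.d.\ standard Gaussian, and set $\psi_N = \sum_{j \le N} \lambda_j^{-1/2} X_j e_j$. Define the approximating measures
\[
	M^\gamma_{g,N}(\psi) \coloneq \exp\!\big(\gamma \psi_N - \tfrac{\gamma^2}{2}\,\E[\psi_N^2]\big)\,\omega_g ,
\]
whose normalization makes $\E\big[M^\gamma_{g,N}(\psi)(h)\big] = \omega_g(h)$ for bounded $h$; the logarithmic bound on the Green kernel $k_g$ gives $\E[\psi_N(x)^2] = \log(1/\eps_N) + O(1)$ uniformly in $x$ for a suitable scale $\eps_N \to 0$. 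The heart of the matter is to prove that $M^\gamma_{g,N}(\psi)$ converges as $N \to \infty$, in probability for the topology of weak convergence on $\CM$, to a nontrivial limit $M^\gamma_g(\psi)$, independent of the regularization by Kahane's uniqueness theorem.

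For the convergence I would first dispatch the $L^2$ regime $\gamma^2 < n$: there $\E\big[M^\gamma_{g,N}(\psi)(h)^2\big] = \iint h(x)h(y)\,e^{\gamma^2\,\E[\psi_N(x)\psi_N(y)]}\,\omega_g(dx)\,\omega_g(dy)$ stays bounded as $N \to \infty$, because $\E[\psi_N(x)\psi_N(y)] \to k_g(x,y)$ and the resulting diagonal singularity $d_g(x,y)^{-\gamma^2}$ is integrable on $M \times M$ exactly when $\gamma^2 < n$; since $(M^\gamma_{g,N}(\psi)(h))_N$ is a nonnegative martingale (the modes are independent, and the Wick correction cancels the conditional mean), $L^2$-boundedness plus martingale convergence yield a nontrivial limit whose limiting set function is a.s.\ Radon by Kahane's argument. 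To cover the entire subcritical range $\gamma^2 < 2n$ I would compare $M^\gamma_{g,N}(\psi)$, via Kahane's convexity inequality, with an exactly scale-invariant reference log-correlated field built in a chart --- whose covariance differs from $k_g$ by a bounded continuous function, hence is comparable after absorbing the difference into an auxiliary independent Gaussian --- thereby transferring nontriviality (and later the moment bounds) from the reference field, where they are classical; alternatively, Berestycki's truncation-and-uniform-integrability argument with the good event $\{\psi_N \le \alpha \log(1/\eps_N)\}$, $\gamma < \alpha < \sqrt{2n}$, works directly. Measurability of $\psi \mapsto M^\gamma_g(\psi)$ is then automatic: each $M^\gamma_{g,N}$ depends on $\psi$ only through the finitely many pairings $(\psi, e_j)$, $j \le N$, so it is continuous on $\mathring{H}^{0-}_g$, and the a.s.\ limit of measurable maps is measurable.

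Property (i) follows by passing to the limit in the level-$N$ identity $M^\gamma_{g,N}(\psi + h) = e^{\gamma h_N} M^\gamma_{g,N}(\psi)$: for smooth $h$, $h_N \to h$ uniformly, so the right side tends to $e^{\gamma h} M^\gamma_g(\psi)$ while the left side tends to $M^\gamma_g(\psi + h)$, giving the identity $\mu_g$-a.s.; for general $h$ in the Cameron--Martin space $\mathring{H}^{n/2}_g$ one approximates $h$ by smooth functions in $\mathring{H}^{n/2}_g$ (using density of $C^\infty(M)$), controls the law of $\psi \mapsto \psi + h$ via the Cameron--Martin theorem, and uses the Moser--Trudinger inequality to see that $e^{\gamma h}$ is an admissible density, and a separability argument then produces a single $\mu_g$-null set valid for all $h$ at once. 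The constant-shift extension to $H^{0-}_g$ is immediate since $M^\gamma_{g,N}(\psi + c) = e^{\gamma c} M^\gamma_{g,N}(\psi)$ at every level. For (ii): the positive moments $p \in [1, 2n/\gamma^2)$ come from Kahane's inequality against the reference field, whose $p$-th moment is finite in exactly this window; moments $p \in (0,1)$ follow from the case $p = 1$ by Jensen; and every negative moment is finite because $M^\gamma_g(\psi)(1) > 0$ a.s.\ and a ball-covering/near-independence estimate, as in the Euclidean case, gives $\mathbb{P}\big(M^\gamma_g(\psi)(1) < \delta\big) \le C_q\,\delta^q$ for all $q > 0$.

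The main obstacle is the convergence step in the regime $n \le \gamma^2 < 2n$, where the second-moment method is unavailable: one must either set up the comparison with an exactly scaling field precisely enough that Kahane's convexity inequality simultaneously yields nontriviality and the sharp moment window, or push Berestycki's uniform-integrability scheme through in the manifold setting. That same comparison is what pins down the threshold $2n/\gamma^2$ in (ii). Everything else reduces to bookkeeping once the logarithmic asymptotics of $k_g$ and the equivalence of co-polyharmonic with usual Sobolev norms from \cite{DHKS} are in hand.
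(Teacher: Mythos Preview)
The paper does not prove this proposition at all: it is stated with the attribution ``[\cite{DHKS} Theorem 4.1]'' and used as a black box throughout. So there is no proof in the paper to compare your proposal against.

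That said, your sketch is a reasonable outline of the standard GMC construction specialized to the co-polyharmonic setting, and is broadly in the spirit of what \cite{DHKS} does. A few remarks on your outline as a standalone argument: the eigenfunction truncation $\psi_N$ is a valid regularization, but the claim that $\E[\psi_N(x)^2] = \log(1/\eps_N) + O(1)$ \emph{uniformly in $x$} needs care, since eigenfunction partial sums of $k_g$ on the diagonal do not in general have such clean asymptotics without additional input (Weyl-type bounds on eigenfunctions); the papers in this area typically use mollifier-based or heat-kernel regularizations instead, for which the diagonal asymptotics are easier to control. Your handling of property (i) for general $h \in \mathring{H}^{n/2}_g$ via approximation and Cameron--Martin is correct in spirit, though the ``single null set for all $h$'' step is usually done by first fixing a countable dense set and then using continuity of both sides in $h$. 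The moment bounds in (ii) are exactly as you describe, with the negative moments following from a small-ball estimate.
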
 

In other words, we can obtain a measure $\omega_g = e^{\gamma \phi_g} \, \omega_\mathrm{ref}$ from $\phi_g$ so long as the regularity of $\phi_g$ is $0-$. We also want to be able to recover $\phi_g$ from $\omega_g$, so we need a measurable inverse to the map $M^\gamma_g$ (one can think of this map as taking the ``logarithm" of a GMC measure). For a log-correlated field $G$ on a bounded domain $D_0 \subseteq \R^d$, let $e^{\gamma G} \omega$ denote the Euclidean GMC measure with ground measure $\omega$ on $D_0$. We use the following result of Vihko:

\begin{proposition}[\cite{Vihko24}]\label{gmc-inversion}
	Suppose $\gamma \in (0, \sqrt{2n}]$ and $G$ is a centered Gaussian field on a bounded domain $D_0 \subseteq \R^d$ with covariance kernel of the form \[
		C_G (x, y) = \log \frac{1}{\lvert x - y \rvert} + q_G(x, y)
	\]
	where $q_G \in H^{n+\eps}_\mathrm{loc} (D_0 \times D_0)$ is continuous on the interior of $D_0$. Then for any $D$ compactly contained in $D_0$, there is a map $X^\gamma$ from the space of Borel measures on $D$ to the space of distributions on $D$ such that $X^\gamma (e^{\gamma G} m) = G$ almost surely, where $m$ is Lebesgue measure on $D$.
\end{proposition}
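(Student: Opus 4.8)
The plan is to produce an explicit formula for $X^\gamma$ in terms of the masses that $\mu := e^{\gamma G}m$ assigns to small balls, and then to check that this formula returns $G$ almost surely. I assume $\gamma > 0$, the case $\gamma = 0$ being degenerate. Conceptually, an inverse should exist because the chaos construction is injective — this is essentially Shamov's description of GMC as a randomized shift, from which $(G,h)$ is already a measurable function of $\mu$ for every smooth $h$ — but to obtain an honest Borel map, and in particular to reach the critical value $\gamma = \sqrt{2d}$ with $d = \dim D$, it is cleanest to work with ball masses directly.

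Fix the normalization. Standard chaos theory gives, for a typical interior point $x$ and small $\delta$, a decomposition
\[
	\mu(B(x,\delta)) \;=\; \delta^{\,d + \gamma^2/2}\, e^{\gamma G_\delta(x)}\, Y_\delta(x)
\]
(with an extra factor $\sqrt{\log(1/\delta)}$ and $d + \gamma^2/2$ replaced by $2d$ at criticality), where $G_\delta$ is a scale-$\delta$ regularization of $G$ with $\Var(G_\delta(x)) = \log(1/\delta) + q_G(x,x) + o(1)$, and $Y_\delta(x)$ is a small-scale factor which, by the approximate scale invariance and the domain Markov property of log-correlated fields, is conditionally on the scales $\geq \delta$ a chaos-type variable on a unit ball — up to corrections controlled by the continuity of $q_G$. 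In particular $\{Y_\delta(x)\}_x$ decorrelates at range $O(\delta)$, $\E[\log Y_\delta(x)]$ converges to a continuous function of $x$, and $\Var(\log Y_\delta(x))$ is bounded uniformly in $\delta$ and $x$. Let $c(x)$ absorb the resulting continuous correction and set
\[
	A_\delta(x) \;:=\; \gamma^{-1}\Big( \log\mu(B(x,\delta)) \;-\; \big(d + \tfrac{\gamma^2}{2}\big)\log\delta \Big) \;-\; c(x)
\]
(replacing $(d+\tfrac{\gamma^2}{2})\log\delta$ by $2d\log\delta + \tfrac12\log\log(1/\delta)$ at criticality); after mollifying the indicator $\mathbf{1}_{B(x,\delta)}$ slightly — harmless since $\mu$ is a.s.\ atomless — the map $\mu \mapsto A_\delta(\cdot)$ is Borel from measures to functions on $D$.

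The heart of the matter is to show that for every test function $f$ supported in the interior of $D$,
\[
	\int_D f(x)\, A_\delta(x)\, dx \;\xrightarrow[\delta\to 0]{}\; (G, f) \qquad\text{in } L^2(\mathbb{P})
\]
(in probability at criticality). The decomposition above yields $A_\delta = G_\delta + F_\delta + o(1)$ with $F_\delta(x) := \gamma^{-1}(\log Y_\delta(x) - \E\log Y_\delta(x))$; since $G_\delta \to G$ in the space of distributions we get $\int f\, G_\delta \to (G,f)$, while for the fluctuation $F_\delta$ — mean zero, uniformly bounded variance, range-$O(\delta)$ correlations — one has
\[
	\Var\!\Big( \int_D f\, F_\delta \Big) \;=\; \iint f(x)f(y)\, \Cov(F_\delta(x),F_\delta(y))\, dx\, dy \;\lesssim\; \|f\|_\infty^2\, \delta^{\,d} \;\longrightarrow\; 0 .
\]
Two ingredients are needed: (a) the identification of the conditional law of $\mu(B(x,\delta))$ via near-scale-invariance and the domain Markov property, the continuity of $q_G$ being exactly what makes the small-scale picture near any point match a scale-invariant model up to $O(1)$; and (b) uniform chaos estimates controlling $\E|\log\mu(B(x,\delta))|$ and its variance — in the subcritical range these follow from Kahane's convexity inequality and the existence of moments of order in $(-\infty, 2d/\gamma^2)$ (cf.\ Proposition \ref{CGMC}(ii)), whereas at $\gamma = \sqrt{2d}$ one must use the derivative-martingale / Seneta--Heyde description of the critical chaos. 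Granting this, pick a countable family $\{f_j\}$ separating distributions and a deterministic subsequence $\delta_k \downarrow 0$ along which $\int f_j A_{\delta_k} \to (G,f_j)$ a.s.\ for all $j$ (diagonal argument); define $X^\gamma(\mu)$ to be the distribution $f \mapsto \lim_k \int f\, A_{\delta_k}$ whenever these limits exist for all $f_j$ and extend to a continuous linear functional (using an a priori bound $|\int f\, A_\delta| \lesssim \|f\|_{C^N}\cdot(\text{tight random variable})$), and $X^\gamma(\mu) := 0$ otherwise. Each $\mu \mapsto \int f_j A_{\delta_k}$ is Borel, so $X^\gamma$ is a Borel map from the Borel measures on $D$ to the distributions on $D$, and $X^\gamma(e^{\gamma G}m) = G$ a.s.\ by construction.

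I expect the main obstacle to be the uniform-in-$\delta$ control of the fluctuation field $F_\delta$, and in particular the critical case $\gamma = \sqrt{2d}$: there $\mu$ has no moment of order $\geq 1$, so one cannot manipulate ball masses in $L^2$ and must work throughout with their logarithms, using the finer structure of critical chaos; showing that $\Var(\log Y_\delta(x))$ stays bounded as $\delta \to 0$ at criticality is the genuinely new input. The subcritical case is comparatively soft, since there all the moments invoked in (b) are available directly.
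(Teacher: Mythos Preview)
Your overall strategy---take logarithms of the mass that $\mu$ assigns to small neighborhoods, subtract a deterministic counter term, and show that the result converges weakly to $G$---is exactly the approach of \cite{Vihko24} as summarized in the paper's appendix. The paper also uses smooth mollifiers $\eta_\eps$ in place of balls, but that is cosmetic.

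The gap is in how you justify the properties of the small-scale factor $Y_\delta(x)$. You invoke ``approximate scale invariance and the domain Markov property of log-correlated fields'' to conclude that $Y_\delta(x)$ is, conditionally on large scales, a chaos-type variable on a unit ball, and that $\{Y_\delta(x)\}_x$ decorrelates at range $O(\delta)$. But a general log-correlated field has neither an intrinsic scale decomposition nor a domain Markov property---the latter is special to the Gaussian free field. Without these, there is no obvious way to separate ``scales $\geq \delta$'' from ``scales $< \delta$'' with the independence you need, and the decorrelation claim for $Y_\delta$ is unsupported.

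The paper (following \cite{Vihko24}) resolves this by first invoking the decomposition $G = S + H$ from \cite{JSW19}, where $H$ is H\"older continuous and $S$ is $\star$-scale invariant: $S$ has an exact cut-off decomposition $S_\eps$ such that the increments $Z_{\eps,\delta,x}(u) = S_\delta(x+\eps u) - S_\eps(x+\eps u)$ are \emph{exactly} independent of $S_\eps$ and of $Y_{\eps,x}(u) = S_\eps(x+\eps u) - S_\eps(x)$, and exactly equal in law to $S_{\delta/\eps}(u)$. This is what makes the representation $\int \eta_\eps(y-x)\,\nu_{\gamma,S}(dy) = e^{\gamma S_\eps(x) - \frac{\gamma^2}{2}\log(\eps^{-1})} \int \eta(u) e^{\gamma Y_{\eps,x}(u)}\,\nu_{\gamma,S}^{\eps,x}(du)$ work, with $\nu_{\gamma,S}^{\eps,x}$ genuinely independent of the rest and distributed like $\nu_{\gamma,B_1(0)}$. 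One then builds the inverse for $S$ and bootstraps to $G$ using the explicit relation $\nu_{\gamma,G}(dx) = e^{\gamma H(x) - \frac{\gamma^2}{2}[g_G(x,x)-g_S(x,x)]}\nu_{\gamma,S}(dx)$. Your $Y_\delta(x)$ is morally playing the role of $\int \eta(u) e^{\gamma Y_{\eps,x}(u)}\,\nu_{\gamma,S}^{\eps,x}(du)$, but the independence structure that makes the variance computation go through comes from the $\star$-scale invariant reduction, not from any general property of log-correlated fields.
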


\begin{remark}\label{as-conv}
	The way we state this result here is slightly different from the way it is stated in \cite{Vihko24}. Theorem A of \cite{Vihko24} says that there are random variables $G_\eps$ which are measurable with respect to $e^{\gamma G} m$ such that $\langle \psi, G_\eps \rangle \to \langle \psi, G \rangle$ in probability as $\eps \to 0$ for all suitable test functions $\psi$. It follows from this that one can extract $G$ from $e^{\gamma G} m$ in a measurable way on compact subsets.
\end{remark}

It is not immediate that we can apply this result to our situation, since it only holds for domains in Euclidean space. The next lemma addresses this issue. Denote by $\CD' (M)$ the space of distributions on $(M, g_\mathrm{ref})$. 

\begin{lemma}\label{CGMC-inversion}
	Suppose $\gamma \in (0, \sqrt{2n})$ and $\psi$ is a CGF on $(M, g_\mathrm{ref})$. Then there is a map $X^\gamma: \CM \to \CD' (M)$ such that $X^\gamma (M^\gamma_\mathrm{ref} (\psi)) = \psi$ almost surely.
\end{lemma}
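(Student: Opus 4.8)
The plan is to reduce the problem on $M$ to a collection of Euclidean problems covered by Proposition \ref{gmc-inversion}, then glue the resulting local reconstructions together measurably. First I would fix a finite atlas $\{(U_i, \kappa_i)\}_{i=1}^N$ of $M$ by coordinate charts, together with a subordinate partition of unity $\{\chi_i\}$ and slightly larger charts $U_i' \Supset \supp \chi_i$ with $\overline{U_i'} \subset U_i$. On each $U_i$, pushing the CGF $\psi$ forward under $\kappa_i$ gives a centered Gaussian field $G_i := (\kappa_i)_* (\psi|_{U_i})$ on the bounded domain $D_i := \kappa_i(U_i) \subset \R^n$. The key analytic input is that the covariance kernel of $G_i$ has exactly the form required by Proposition \ref{gmc-inversion}: this follows from the logarithmic bound $|k_g(x,y) - \log(1/d_g(x,y))| \le C$ stated before Definition \ref{CGF}, combined with the fact that in local coordinates $d_g(x,y)$ is comparable to the Euclidean distance $|\kappa_i(x) - \kappa_i(y)|$ up to a continuous (indeed smooth) correction, so that $C_{G_i}(u,v) = \log \frac{1}{|u-v|} + q_i(u,v)$ with $q_i$ continuous on the interior of $D_i$. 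One also checks that the pushforward of $M^\gamma_\mathrm{ref}(\psi)$ restricted to $U_i$ agrees, up to the smooth Jacobian density $J_i$ of $\kappa_i^{-1}$ relative to $\omega_\mathrm{ref}$, with the Euclidean GMC measure $e^{\gamma G_i} m_i$ built from $G_i$; absorbing $J_i$ is harmless since multiplying a GMC measure by a bounded-below continuous density does not affect the reconstruction map (or one can incorporate it into the "$q_i$" via an affine shift, as $\log J_i$ is continuous).

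Next I would apply Proposition \ref{gmc-inversion} on each $D_i$ to obtain a measurable map $X^\gamma_i$ from Borel measures on $D_i$ to distributions on $D_i$ with $X^\gamma_i(e^{\gamma G_i} m_i) = G_i$ almost surely. Pulling back by $\kappa_i$ yields, for $\mu_\mathrm{ref}$-a.e. $\psi$, a distribution $\psi_i$ on $U_i$ which coincides with $\psi|_{U_i}$. The reconstruction map is then defined by $X^\gamma(\nu) := \sum_{i=1}^N \chi_i \cdot \bigl(\kappa_i^* X^\gamma_i\bigr)\bigl( (\kappa_i)_*(\nu|_{U_i}) \bigr)$, an element of $\CS'(M)$; measurability of $X^\gamma$ as a map $\CM \to \CS'(M)$ follows from measurability of each $X^\gamma_i$ together with the fact that $\nu \mapsto (\kappa_i)_*(\nu|_{U_i})$ and the operations of multiplication by $\chi_i$ and summation are all continuous, hence measurable. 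For $\nu = M^\gamma_\mathrm{ref}(\psi)$ we get $X^\gamma(\nu) = \sum_i \chi_i \cdot (\psi|_{U_i}) = \psi$ almost surely, using $\sum_i \chi_i = 1$; since $\CS'(M)$ embeds into (any) $\mathring{H}^{-s}_\mathrm{ref}$-compatible distribution space up to the constant mode, and the lemma only asserts the identity as distributions, this gives $X^\gamma(M^\gamma_\mathrm{ref}(\psi)) = \psi$.

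The main obstacle I anticipate is \emph{consistency of the local reconstructions on overlaps together with the null-set bookkeeping}: Proposition \ref{gmc-inversion} only guarantees $X^\gamma_i(e^{\gamma G_i} m_i) = G_i$ off a $\mu$-null set depending on $i$, and one must ensure that the finitely many exceptional sets can be discarded simultaneously (trivial, as $N < \infty$) and that on the good event the patched distribution genuinely equals $\psi$ rather than merely a field with the same law. The cleanest way around this is to observe that the statement of Proposition \ref{gmc-inversion} is an almost-sure \emph{identity} for the specific field $G$, not just an equality in law, so $\kappa_i^* X^\gamma_i\bigl((\kappa_i)_* M^\gamma_\mathrm{ref}(\psi)|_{U_i}\bigr) = \psi|_{U_i}$ holds $\mu_\mathrm{ref}$-a.s. as distributions on $U_i$, and distributions that agree on each $U_i$ of an open cover agree globally; the partition of unity then merely assembles them. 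A secondary technical point is verifying that the Jacobian/metric-density corrections really do land inside the "continuous remainder" hypothesis of Vihko's theorem and do not, for instance, degenerate near $\partial D_i$ — this is handled by working on the relatively compact $U_i' \Subset U_i$ where all the relevant densities and kernels are bounded above and below, which is why the slightly-larger-chart setup is needed.
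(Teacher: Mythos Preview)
Your overall architecture matches the paper's: cover $M$ by finitely many coordinate patches, push the field and the CGMC measure to bounded Euclidean domains, invoke a Vihko-type inversion there, and reassemble. Where you diverge is in handling the volume density. The paper works in normal coordinates (so that $g_{ij}$ is uniformly close to $\delta_{ij}$) but keeps the pushforward volume $\tilde\omega_\mathrm{ref}$ as the ground measure for the Euclidean GMC; this forces it to prove a genuine extension of Vihko's theorem to GMC with a ground measure of density $\lambda\in[1-\rho,1+\rho]$ (Lemma~\ref{log-GMC-generalized} in the appendix), which is a nontrivial reworking of Vihko's argument. Your proposal to instead divide the pushed-forward measure by the known deterministic density $\lambda$ and then apply Proposition~\ref{gmc-inversion} verbatim is a legitimate shortcut that, if it goes through, bypasses that appendix entirely.

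The step you underweight is the one you phrase as ``one also checks that the pushforward of $M^\gamma_\mathrm{ref}(\psi)$ \ldots\ agrees, up to the smooth Jacobian density, with the Euclidean GMC measure $e^{\gamma G_i}m_i$.'' This is where the paper spends most of its effort, and it is not a formality: the CGMC measure on $M$ and the Euclidean GMC on $D_i$ are built via different approximation schemes with different renormalizations, and matching them is a uniqueness-of-GMC statement. The paper constructs explicit approximations $\tilde\psi^j$ on $\tilde U$ by transporting the manifold mollifications, and then verifies the three hypotheses of Shamov's Theorem~25 (uniform integrability of the approximating masses, a.e.\ convergence of mollified Cameron--Martin shifts, a.e.\ convergence of the kernels) to conclude that the pushed-forward approximants converge to $e^{\gamma\tilde\psi}\tilde\omega_\mathrm{ref}$. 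Your plan is sound, but you should expect this identification---not the gluing or the null-set bookkeeping you flag---to be the substantive part of the proof.
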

\begin{proof}
	We will first construct inverse maps locally on subsets of $M$, then use compactness to piece them together. Since $M$ is locally conformally flat, for any $p \in M$ there is a neighborhood $U$ of $p$ and an isometry $i: (U, g_\mathrm{ref}) \to (\tilde{U}, e^{2F} g_e)$, where $\tilde{U} \subset \R^n$, $F \in C^\infty (\tilde{U})$, and $g_e$ is the Euclidean metric. 
	
	We claim that without loss of generality, we can assume $F(i(p)) = 0$. Indeed, consider the map $s: \tilde{U} \to e^{F(i(p))} \tilde{U}$ given by $s(x) = e^{F(i(p))} x$. This induces an isometry from $(\tilde{U}, e^{2F} g_e)$ to $(e^{F(i(p))} \tilde{U}, s_* (e^{2F} g_e))$. The induced push-forward metric is equal to $g_e$ at the point $s(i(p))$, so if we take our isometry $i$ to be $s \circ i$ then we will have $F(i(p)) = 0$.
	
	 Since $F$ and $i$ are smooth, we can shrink $U$ to a small ball around $p$ so that $e^{nF(i(x))} \in [1-\eps, 1+\eps]$ for all $x \in U$, where $\eps < \min ( 1, \tfrac{\sqrt{2n}}{\gamma}-1)$. Since the construction of a CGF and its corresponding CGMC measure are invariant under isometries, to construct an inverse CGMC map on $(U, g_\mathrm{ref})$ it suffices to construct one on the isometric copy $(\tilde{U}, e^{2F} g_e)$.
	
	Consider a CGF on $(\tilde{U}, e^{2F} g_e)$. On $(\tilde{U}, g_e)$, the co-polyharmonic operator is just $(-\Delta)^{n/2}$ (Example 1.6 of \cite{DHKS}), so by Proposition \ref{copoly-ops}(iii), the co-polyharmonic operator on $(\tilde{U}, e^{2F} g_e)$ is $e^{-nF} (-\Delta)^{n/2}$. With Dirichlet boundary conditions, its inverse has a kernel proportional to $k(x, y) = \log (\lvert x - y \rvert^{-1}) + l(x, y)$ for some $l \in C^\infty (\tilde{U} \times \tilde{U})$; here the conformal change of metric leaves the kernel unchanged by the same argument as in Proposition 2.20 of \cite{DHKS}. By elliptic regularity, this kernel differs from the restriction of the original kernel of the co-polyharmonic operator on $M$ by another smooth function on $\tilde{U} \times \tilde{U}$. Absorbing this smooth function into $l$, we see that the covariances of $\psi$ restricted to $\tilde{U}$ have the form
	\[
		\E [(\psi, u) (\psi, v)] = \int_{\tilde{U}} \int_{\tilde{U}} u(x) v(y) (\log (\lvert x - y \rvert^{-1}) + l(x, y)) \, e^{nF(x)} dx \, e^{nF(y) }dy \;.
	\]
	where $l$ is smooth. Since $\eps \leq \tfrac{\sqrt{2n}}{\gamma} - 1$, it follows from Lemma \ref{log-GMC-generalized}, which is a slight generalization of Lemma \ref{gmc-inversion}, that if we shrink $\tilde{U}$ to a slightly smaller ball so that it is compactly contained in the original $\tilde{U}$, then there is a measurable inverse map $\tilde{X}^\gamma$ from the space of Borel measures on $\tilde{U}$ to $\CD'(\tilde{U})$ such that $\tilde{X}^\gamma (e^{\gamma \psi} e^{nF} dx) = \psi$ almost surely.
	
	Pulling everything back by the isometry $i$, we thus have an inverse map $X^\gamma$ to $M^\gamma_\mathrm{ref}$ on a neighborhood of $p$.  We can repeat this construction for any $p \in M$. By compactness, there is a finite sub-collection of these inverse maps whose respective domains cover $M$. We claim that these inverse maps can be glued together to obtain a global inverse map on all of $M$. If $p \in U_1 \subset U_2$ and we have constructed inverse maps as above on $U_1$ and $U_2$, then the outputs of the two maps agree as fields on $U_1$, i.e. they give equal outputs when paired with test functions compactly supported in $U_1$. This is clear from the construction in Lemma \ref{log-GMC-generalized}. Now if $p, q \in M$ have neighborhoods $U_p, U_q$ on which we have constructed inverse maps and $U_p \cap U_q \neq \emptyset$, the two inverses must agree on functions compactly supported in $U_p \cap U_q$. This shows that the fields obtained from these local inverse maps are compatible, and can thus be patched together via a partition of unity to obtain a global inverse $X^\gamma (M^\gamma_\mathrm{ref} (\psi))$.

\end{proof}

\subsection{Symmetrizing measure for NQF}\label{sym-nqf}

Now we will use CGMC measures to interpret the densities \eqref{nqf-measure} and \eqref{qf-measure}, which we expect to be symmetric for the corresponding stochastic dynamics. We will analyze them one at a time, starting with the NQF density \eqref{nqf-measure}. For convenience, recall the formal expression: \begin{align*}
	\omega (f)^{2 Q_\mathrm{ref} (1) / (n \sigma^2)} \exp \left( - \sigma^{-2} \omega_\mathrm{ref} (\phi P_\mathrm{ref} \phi + 2 Q_\mathrm{ref} \phi) \right) \, \omega_\mathbf{g} (d\omega)
\end{align*}
where we assume that $f \in C^\infty (M)$ is positive and that conditions (A1) and (A2) hold. Letting $\psi = \sqrt{2 / (a_n \sigma^2)} \phi$, this can be rewritten as \[
	\omega (f)^{2 Q_\mathrm{ref} (1) / (n \sigma^2)} \exp \left(- Q_\mathrm{ref} \sqrt{\frac{2a_n}{\sigma^2}} \omega_\mathrm{ref} (\psi) \right) \exp \left( -\frac{1}{2} \omega_\mathrm{ref} (\psi p_\mathrm{ref} \psi) \right) \, \omega_\mathbf{g} (d\omega) \;.
\]
We recognize the last exponential as the formal density of an ungrounded CGF. Making this identification, the expression becomes \begin{align}\label{nqf-measure-calc}
	\omega (f)^{2 Q_\mathrm{ref} (1) / (n \sigma^2)} \exp \left(- Q_\mathrm{ref} \sqrt{\frac{2a_n}{\sigma^2}} \omega_\mathrm{ref} (\psi + c) \right) \, \mu_\mathrm{ref} (d\psi) \, dc \;.
\end{align}
Since $\psi \sim \mu_\mathrm{ref}$ is grounded, $\omega_\mathrm{ref} (\psi) = 0$ almost surely. This lets us simplify to obtain \begin{align}\label{nqf-measure-calc-2}
	\omega (f)^{2 Q_\mathrm{ref} (1) / (n \sigma^2)} \exp \left(-c Q_\mathrm{ref} \sqrt{\frac{2a_n}{\sigma^2}} \omega_\mathrm{ref} (1) \right) \, \mu_\mathrm{ref} (d\psi) \, dc \;.
\end{align}
If we momentarily fix $c \in \R$ and look at the marginal distribution of $\psi$, we see that it can be normalized to a probability measure whenever \begin{align}\label{nqf-marginal}
	\E_{\mu_\mathrm{ref}} \left[ \omega (f)^{2 Q_\mathrm{ref} (1) / (n \sigma^2)} \right] < \infty \;.
\end{align}
Since $\omega = e^{n \phi} \omega_\mathrm{ref}$ but we have changed variables, we must recompute an expression for $\omega$ in terms of $\psi$ and $c$. We find that
 \[
	\omega = e^{n \phi} \omega_\mathrm{ref} = e^{\gamma c} M^\gamma_\mathrm{ref} (\psi)
\]
where \[
	\gamma = \frac{n \sqrt{a_n \sigma^2}}{\sqrt{2}} = \frac{n \sigma}{(4\pi)^{n/4} \sqrt{(n/2 - 1)!}} \;.
\]
In order for this measure to be well-defined in the sense of Proposition \ref{CGMC}, we need $\gamma < \sqrt{2n}$, or equivalently \begin{align}\label{sigma-bound}
	\sigma^2 < \frac{2 (4\pi)^{n/2} (n/2 - 1)!}{n} \;.
\end{align}
The right-hand side of this inequality grows rapidly in $n$, so this condition is most strict when $n=2$, where it becomes $\sigma^2 < 4\pi$. This precisely matches the condition found in the main result of \cite{DS22}. 

For any smooth $f$, $\omega (f)$ is bounded above in absolute value by a constant times $\omega (1)$. Therefore, moments of $M^\gamma_\mathrm{ref} (\psi) (f)$ are bounded so long as the corresponding moments of $M^\gamma_\mathrm{ref} (\psi) (1)$ are. By Proposition \ref{CGMC}(ii), for inequality \eqref{nqf-marginal} to hold we need $2Q_\mathrm{ref} (1) / (n \sigma^2) < 2n / \gamma^2$. Solving for $Q_\mathrm{ref} (1)$, this is equivalent to \[
	Q_\mathrm{ref} (1) < (4\pi)^{n/2} (n/2 - 1)! \;.
\]
Recall that in Section \ref{q-curv} we assumed that $Q_\mathrm{ref} (1) < Q_r (1)$. Let us compare that to the condition we just obtained. The $Q$-curvature of $S^n$ is the constant function $Q_r = (n-1)!$ (see \cite{Ndiaye07}), so \[
	Q_r (1) = (n-1)! \omega_r (1) = \frac{2^{n/2 + 1} \pi^{n/2} (n-1)!}{(n-1)!!} = (4\pi)^{n/2} (n/2-1)!
\]
which is exactly the same bound. In other words, integrability of the marginal always holds because we have assumed condition (A2). 

Using $\omega(f) = e^{\gamma c} M^\gamma_\mathrm{ref} (\psi)$ in \eqref{nqf-measure-calc-2} and expanding the definition of $\gamma$, we see that the density simplifies to \[
	M^\gamma_\mathrm{ref} (\psi) (f)^{2 Q_\mathrm{ref} (1) / (n \sigma^2)} \, \mu_{\mathrm{ref}} (d\psi) \, dc \;.
\]
Note that the density is entirely independent of $c$, so the measure is translation invariant. In particular, this implies that whenever the marginals for fixed $c$ have finite measure, the measure is $\sigma$-finite. We denote this measure, which can now be interpreted rigorously as a measure on $H^{0-}_\mathrm{ref}$ using \eqref{nqf-measure-calc}, by $\nunqf$. We will make use of the following integrability properties for this measure.

\begin{lemma}\label{nqf-int}
	For any $\eps \in (0, 1)$, \[
		\nunqf (\{\psi: M^\gamma_\mathrm{ref} (\psi) (1) \in (\eps, \eps^{-1}) \}) < \infty \;.
	\]
	Moreover, suppose $Z: \mathring{H}^{0-}_\mathrm{ref} \to \R$ is such that $Z(\psi)$ is a centered Gaussian when $\psi \sim \mu_\mathrm{ref}$. Then the random variable \[
	Z(\psi - \omega_\mathrm{ref} (\psi) / \omega_\mathrm{ref} (1)) \one_{\{M^\gamma_\mathrm{ref} (\psi) (1) \in (\eps, \eps^{-1}) \}}
\]
is in $L^p (\nunqf)$ for all $p \geq 1$. 
\end{lemma}
\begin{proof}
	Starting with the first claim, we compute \begin{align*}
		&\int_{H^{0-}_\mathrm{ref}} \one_{\{M^\gamma_\mathrm{ref} (\psi) (1) \in (\eps, \eps^{-1}) \}} \, \nunqf(d\psi) \\
		&= \int_\R \int_{\mathring{H}^{0-}_\mathrm{ref}} \one_{\{\psi: M^\gamma_\mathrm{ref} (\psi+c) (1) \in (\eps, \eps^{-1}) \}} M^\gamma_\mathrm{ref} (\psi) (f)^{2 Q_\mathrm{ref} (1) / (n \sigma^2)}\, \mu_\mathrm{ref} (d\psi) \, dc \\
		&\lesssim \int_\R \int_{\mathring{H}^{0-}_\mathrm{ref}}\one_{\{M^\gamma_\mathrm{ref} (\psi) (1) \in (\eps e^{-\gamma c}, \eps^{-1} e^{-\gamma c}) \}} M^\gamma_\mathrm{ref} (\psi) (1)^{2 Q_\mathrm{ref} (1) / (n \sigma^2)}\, \mu_\mathrm{ref} (d\psi) \, dc \\
		&\lesssim \sum_{k=-\infty}^\infty \int_{k \lvert \log \eps \rvert}^{(k+1) \lvert \log \eps \rvert}\int_{\mathring{H}^{0-}_\mathrm{ref}} \one_{M^\gamma_\mathrm{ref} (\psi) (1) \in (\eps^{1+\gamma(k+1)}, \eps^{-1+\gamma k})} \eps^{2 \gamma k Q_\mathrm{ref} (1) / (n \sigma^2)}\, \mu_\mathrm{ref} (d\psi) \, dc  \\
		&\lesssim \sum_{k=-\infty}^\infty \int_{\mathring{H}^{0-}_\mathrm{ref}} \one_{M^\gamma_\mathrm{ref} (\psi) (1) \in (\eps^{1+\gamma(k+1)}, \eps^{-1+\gamma k})} \eps^{2 \gamma k Q_\mathrm{ref} (1) / (n \sigma^2)}\, \mu_\mathrm{ref} (d\psi)  \\
		&\lesssim \E_{\mu_\mathrm{ref}} [M^\gamma_\mathrm{ref} (\psi) (1)^{2 Q_\mathrm{ref} (1) / (n \sigma^2)}] < \infty
	\end{align*}
	where the constants from line to line only depend on $\eps, Q_\mathrm{ref} (1), \sigma$, $n$, and $f$.
	
	Next we consider the second claim. Including $\lvert Z(\psi - \omega_\mathrm{ref} (\psi) / \omega_\mathrm{ref} (1)) \rvert^p$ in the integrand in the first line above, we can apply the same argument to obtain the upper bound \[
		\sum_{k=-\infty}^\infty \int_{\mathring{H}^{0-}_\mathrm{ref}} \lvert Z(\psi) \rvert^p \one_{M^\gamma_\mathrm{ref} (\psi) (1) \in (\eps^{1+\gamma(k+1)}, \eps^{-1+\gamma k})} \eps^{2 \gamma k Q_\mathrm{ref} (1) / (n \sigma^2)}\, \mu_\mathrm{ref} (d\psi) \;.
	\]
	Applying Hölder's inequality with some conjugate $r$ and $r^*$ in $(1, \infty)$ allows us to bound this from above by \begin{align*}
		&\sum_{k=-\infty}^\infty \| \lvert Z(\psi)\rvert^p \|_{L^{r^*} (\mu_\mathrm{ref})} \left( \int_{\mathring{H}^{0-}_\mathrm{ref}} \one_{M^\gamma_\mathrm{ref} (\psi) (1) \in (\eps^{1+\gamma(k+1)}, \eps^{-1+\gamma k})} \eps^{2 \gamma k r Q_\mathrm{ref} (1) / (n \sigma^2)}\, \mu_\mathrm{ref} (d\psi) \right)^{1/r} \\
		&\lesssim \sum_{k=-\infty}^\infty \eps^{2 \gamma k Q_\mathrm{ref} (1) / (n \sigma^2)} P_{\mu_\mathrm{ref}} ( M^\gamma_\mathrm{ref} (\psi) (1) \in (\eps^{1+\gamma(k+1)}, \eps^{-1+\gamma k}))^{1/r} \;.
	\end{align*}
	By our moment bounds on $M^\gamma_\mathrm{ref} (\psi) (1)$ together with Markov's inequality, we have \[
		P_{\mu_\mathrm{ref}} ( M^\gamma_\mathrm{ref} (\psi) (1) \leq \eps^{-1+\gamma k}) = P_{\mu_\mathrm{ref}} (( M^\gamma_\mathrm{ref} (\psi) (1))^{-\alpha} \geq \eps^{-\alpha(-1+\gamma k)}) \lesssim \eps^{\alpha \gamma k}
	\]
	for any $\alpha > 0$ and \[
		P_{\mu_\mathrm{ref}} ( M^\gamma_\mathrm{ref} (\psi) (1) \geq \eps^{1+\gamma (k+1)}) \lesssim \eps^{-\beta \gamma k}
	\]
	for any $\beta \in (0, 2n/\gamma^2)$, where the constants in the inequalities are uniform in $k$. Therefore, the sum above is bounded (up to a constant depending on $\alpha$, $\beta$, and $r$ as above) by \[
		\sum_{k=0}^\infty \eps^{2\gamma k Q_\mathrm{ref} (1) / (n \sigma^2)} \eps^{\alpha \gamma k/r} + \sum_{k=1}^{\infty} \eps^{-2\gamma k Q_\mathrm{ref} (1) / (n \sigma^2)} \eps^{\beta \gamma k/r} \;.
	\]
	If $Q_\mathrm{ref} (1) \in [0, Q_r (1))$ then the first sum is finite for any $\alpha > 0$ and the second is finite as long as $\beta$ is sufficiently close to $2n/\gamma^2$ and $r$ is sufficiently close to $1$. If $Q_\mathrm{ref} (1) < 0$, then the first sum is finite for sufficiently large $\alpha$ and the second sum is finite for any $\beta > 0$. 
\end{proof}

By Lemma \ref{CGMC-inversion}, we can equivalently consider $\nunqf$ as a measure on $H_\mathrm{ref}^{0-}$ with respect to $\psi$, or on $\CM$ with respect to $\omega$. We will abuse notation and write $\nunqf (d\psi)$ or $\nunqf (d\omega)$ in each case even though one is, strictly speaking, a pushforward of the other under an invertible measurable map. 

\subsection{Symmetrizing measure for LQF}

Next we define the LQF measure \eqref{qf-measure}, which has formal density \[
	\exp \left( - \sigma^{-2} \omega_\mathrm{ref} (\phi P_\mathrm{ref} \phi + 2\rho Q_\mathrm{ref} \phi) + 2 (n \sigma^2)^{-1} \omega (f) \right) \, \omega_\mathbf{g} (d\omega)
\]
where we assume that $f \in C^\infty (M)$ is nonpositive and that conditions (A1) and (A2') hold. This is in the family of Polyakov-Liouville measures defined in \cite{DHKS}. They show that, provided $f$ is constant and some additional constraints on the parameters hold, this measure is finite. This is an interesting special case because it extends the connection between conformal flows and Liouville quantum gravity observed in \cite{DS22} to higher dimensions. We will discuss this connection further in Section \ref{lqg}. However, the measure is still $\sigma$-finite under much more general conditions.

Proceeding as we did with the NQF measure, write the formal density as \[
	\exp \left( -\rho Q_\mathrm{ref} \sqrt{\frac{2a_n}{\sigma^2}} \omega_\mathrm{ref} (\psi) + \frac{2}{n \sigma^2} \omega (f) \right) \exp \left( -\frac{1}{2} \psi p_0 \psi \right) \, d\omega_\mathbf{g} \;.
\]
Interpreting the last exponential as the density for an ungrounded CGF, this becomes \[
	\exp \left( -\rho Q_\mathrm{ref} \sqrt{\frac{2a_n}{\sigma^2}}  \omega_\mathrm{ref} (\psi+c) + \frac{2}{n \sigma^2} e^{\gamma c} M^\gamma_\mathrm{ref} (\psi) (f) \right) \, \mu_\mathrm{ref} (d\psi) \, dc\;.
\]
Here we still require the inequality \eqref{sigma-bound} as in the previous subsection to ensure that the CGMC measure is well-defined. Once again we have $\omega_\mathrm{ref} (\psi) = 0$ almost surely when $\psi \sim \mu_\mathrm{ref}$, so this simplifies to \begin{align}\label{qf-measure-calc}
	\exp \left( -c \rho Q_\mathrm{ref} \sqrt{\frac{2a_n}{\sigma^2}}  \omega_\mathrm{ref} (1) + \frac{2}{n \sigma^2} e^{\gamma c} M^\gamma_\mathrm{ref} (\psi) (f) \right) \, \mu_\mathrm{ref} (d\psi) \, dc\;.
\end{align}

Denote this measure by $\nuqf$. Unlike for NQF, we do not expect the marginals to be finite when $c$ is fixed. However, we still have integrability properties analogous to Lemma \ref{nqf-int}.

\begin{lemma}\label{qf-int}
	For any $\eps \in (0, 1)$, \[
		\nuqf (\{\psi: M^\gamma_\mathrm{ref} (\psi) (1) \in (\eps, \eps^{-1}) \}) < \infty \;.
	\]
	In particular, $\nuqf$ is $\sigma$-finite. Moreover, suppose $Z: \mathring{H}^{0-}_\mathrm{ref} \to \R$ is such that $Z(\psi)$ is a centered Gaussian when $\psi \sim \mu_\mathrm{ref}$. Then the random variable \[
	Z(\psi - \omega_\mathrm{ref} (\psi) / \omega_\mathrm{ref} (1)) \one_{\{M^\gamma_\mathrm{ref} (\psi) (1) \in (\eps, \eps^{-1}) \}}
\]
is in $L^p (\nuqf)$ for all $p \geq 1$. 
\end{lemma}
\begin{proof}
	Let us first bound the following integral: \begin{align*}
		&\int_\R \int_{\mathring{H}^{0-}_\mathrm{ref}} \exp \left( -c \rho Q_\mathrm{ref} \sqrt{\frac{2a_n}{\sigma^2}} \omega_\mathrm{ref} (1) \right) \one_{M^\gamma_\mathrm{ref} (\psi+c) (1) \in (\eps, \eps^{-1})} \, \mu_\mathrm{ref} (d\psi) \, dc\\
		&\lesssim \int_\R \int_{\mathring{H}^{0-}_\mathrm{ref}} \exp \left( -c \rho Q_\mathrm{ref} \sqrt{\frac{2a_n}{\sigma^2}} \omega_\mathrm{ref} (1) \right) \one_{\{M^\gamma_\mathrm{ref} (\psi) (1) \in (\eps e^{-\gamma c}, \eps^{-1} e^{-\gamma c}) \}} \, \mu_\mathrm{ref} (d\psi) \, dc \\
		&\lesssim \sum_{k=-\infty}^\infty \int_{k \lvert \log \eps \rvert}^{(k+1) \lvert \log \eps \rvert}\int_{\mathring{H}^{0-}_\mathrm{ref}} \eps^{k \rho Q_\mathrm{ref} \sqrt{2a_n / \sigma^2} \omega_\mathrm{ref} (1)} \one_{M^\gamma_\mathrm{ref} (\psi) (1) \in (\eps^{1+\gamma(k+1)}, \eps^{-1+\gamma k})} \, \mu_\mathrm{ref} (d\psi) \, dc  \\
		&\lesssim \sum_{k=-\infty}^\infty \int_{\mathring{H}^{0-}_\mathrm{ref}} \eps^{2 \rho \gamma k Q_\mathrm{ref}(1) / (n \sigma^2)}  \one_{M^\gamma_\mathrm{ref} (\psi) (1) \in (\eps^{1+\gamma(k+1)}, \eps^{-1+\gamma k})} \, \mu_\mathrm{ref} (d\psi)  \\
		&\lesssim \E_{\mu_\mathrm{ref}} [M^\gamma_\mathrm{ref} (\psi) (1)^{2 \rho Q_\mathrm{ref} (1) / (n \sigma^2)}]
	\end{align*}
	which is finite whenever condition (A2') holds, by Proposition \ref{CGMC}(ii). 
	
	To compute the $\nuqf$ measure of the set in the lemma, we still need to include the exponential factor $\exp (2e^{\gamma c} M^\gamma_\mathrm{ref} (\psi) (f) / ( n \sigma^2))$ in the integrand. Since $f$ is assumed non-positive for LQF, this exponential lies in $(0, 1]$ pointwise. The integrand was otherwise non-negative, so including this term does not affect the convergence of the integral and we can conclude the first claim of the lemma.
	
	The second claim follows in exactly the same way the analogous claim was proved in Lemma \ref{nqf-int}. Indeed, a similar application of Hölder's inequality yields that the desired $L^p$ norm is bounded up to a constant by \[
		\sum_{k=-\infty}^\infty \eps^{2 \rho \gamma k Q_\mathrm{ref} (1) / (n \sigma^2)} P_{\mu_\mathrm{ref}} ( M^\gamma_\mathrm{ref} (\psi) (1) \in (\eps^{1+\gamma(k+1)}, \eps^{-1+\gamma k}))^{1/r} \;.
	\]
	This sum can be controlled for all $Q_\mathrm{ref} (1) \in (-\infty, \rho^{-1} Q_r (1))$ using Markov's inequality in exactly the same way as before.
\end{proof}

Let us summarize the results of the last two subsections. We now have explicit meanings for the measures $\nunqf$ and $\nuqf$ associated to NQF and LQF respectively. Provided that inequality \eqref{sigma-bound} and condition (A2) hold, the NQF measure $\nunqf$ is $\sigma$-finite for any smooth $f > 0$. If instead $f \leq 0$ and inequality \eqref{sigma-bound} and condition (A2') hold, then $\nuqf$ is $\sigma$-finite. 

\section{Integration by parts}

The standard Dirichlet inner product for smooth compactly-supported functions on $\R^n$ has the integration-by-parts formula \[
	\int_{\R^n} DF \cdot DG \, dx = \int_{\R^n} F (-\Delta G) \, dx \;.
\]
To construct weak solutions to NQF and LQF, we will make use of bilinear forms $\CE (F, G)$ defined similarly to the left-hand side above, but using our newly constructed measures $\nunqf$ and $\nuqf$. To do computations with such a bilinear form, it will be convenient to rewrite it as an integral of $F (-\CL G)$ for some operator $\CL$ which plays the role of $\Delta$. In this section we will derive integration-by-parts formulas for $\nunqf$ and $\nuqf$ which allow us to compute the corresponding operators $\CL$. 

Suppose $\psi$ is a CGF with law $\mu_\mathrm{ref}$. Since $\psi$ has covariance operator $p_\mathrm{ref}^{-1}$, the Cameron-Martin directions in $\mathring{H}^{0-}_\mathrm{ref}$ are given by $p_\mathrm{ref}^{-1/2} (\mathring{H}^{0-}_\mathrm{ref}) = \mathring{H}_\mathrm{ref}^{(n/2)-}$. With no loss we can work with the slightly smaller space $\mathring{E} \coloneq \mathring{H}_\mathrm{ref}^{n/2}$, equipped with inner product \[
	\langle h_1, h_2 \rangle_E \coloneq \langle \sqrt{p_\mathrm{ref}} h_1, \sqrt{p_\mathrm{ref}} h_2 \rangle_{L^2 (\omega_\mathrm{ref})} = \langle p_\mathrm{ref} h_1, h_2 \rangle_{L^2 (\omega_\mathrm{ref})} \;.
\]
This inner product extends to the ungrounded space $E \coloneq H_{\mathrm{ref}}^{n/2}$ by letting $\langle 1, h \rangle_E = 0$ for all $h$, which makes sense because $p_\mathrm{ref} 1 = 0$. 

We will need dense subclasses of functionals on $H^{0-}_\mathrm{ref}$ for which we can prove our integration-by-parts formulas. The following classes were used to a similar effect in \cite{DS22}: 
\begin{definition}
	Denote by $\tilde{\CC}$ the space of functionals on $H^{0-}_\mathrm{ref}$ of the form \begin{align}\label{test-functional}
		G(\psi) = q(M^\gamma_\mathrm{ref} (\psi) (h_0), \dots, M^\gamma_\mathrm{ref} (\psi) (h_k))
	\end{align}
	where $q \in C^2 (\R^{k+1})$ and $h_i \in C^\infty (M)$, with $h_0$ equal to the constant function $1$. Let $\CC \subset \tilde{\CC}$ be the subset of functionals where $q$ can be chosen with support contained in $(\eps, \eps^{-1}) \times Q$ for some $\eps > 0$ and compact $Q \subset \R^k$.
\end{definition}

We start by computing Fréchet derivatives of these functionals in Cameron-Martin directions.

\begin{lemma}\label{frechet}
	Suppose $G \in \tilde{\CC}$ is of the form \eqref{test-functional} and $h \in E$ is continuous. Then for $\mu_\mathrm{ref}$-a.e. $\psi$, \[
		D_h G (\psi) = \gamma \sum_{i=0}^k \partial_i q (M^\gamma_\mathrm{ref} (\psi) (h_0), \dots, M^\gamma_\mathrm{ref} (\psi) (h_k)) M^\gamma_\mathrm{ref} (\psi) (h_i h) \;.	
	\]
	Moreover, if $G \in \CC$ then for all continuous $h \in E$ there is a constant $C$ depending only on $G$ and $h$ such that $ \lvert D_h G (\psi) \rvert \leq C$ for all $\psi \in \mathring{H}^{0-}_\mathrm{ref}$.
\end{lemma}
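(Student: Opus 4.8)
The plan is to reduce the whole computation to the shift identity for CGMC from Proposition \ref{CGMC}(i) combined with differentiation under the integral sign, and then read off the bound in the $\CC$ case from the support constraint on $q$. First I would fix $\psi$ in the $\mu_\mathrm{ref}$-full set on which $M^\gamma_\mathrm{ref}(\psi + k) = e^{\gamma k}M^\gamma_\mathrm{ref}(\psi)$ holds simultaneously for all $k \in \mathring{E} = \mathring{H}^{n/2}_\mathrm{ref}$ --- this uniformity in $k$ is built into the statement of Proposition \ref{CGMC}(i). Since the extension of $M^\gamma_\mathrm{ref}$ to the ungrounded space, $M^\gamma_\mathrm{ref}(\psi + c) = e^{\gamma c}M^\gamma_\mathrm{ref}(\psi)$, is deterministic, on this same set the identity extends to every $k \in E$; in particular $M^\gamma_\mathrm{ref}(\psi + th) = e^{\gamma t h}M^\gamma_\mathrm{ref}(\psi)$ for all $t \in \R$.

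Writing $D_hG(\psi) = \tfrac{d}{dt}\big|_{t=0}G(\psi + th)$, for each $i$ we then have
\[
	M^\gamma_\mathrm{ref}(\psi + th)(h_i) = \int_M h_i(x)\, e^{\gamma t h(x)}\, M^\gamma_\mathrm{ref}(\psi)(dx).
\]
Because $h$ is continuous on the compact manifold $M$ it is bounded, so for $\lvert t\rvert \le 1$ the difference quotients $(e^{\gamma t h(x)} - 1)/t$ are bounded by $\gamma\lVert h\rVert_\infty e^{\gamma\lVert h\rVert_\infty}$ and converge pointwise to $\gamma h(x)$; dominated convergence against the finite measure $\lvert h_i\rvert\,M^\gamma_\mathrm{ref}(\psi)$ gives $\tfrac{d}{dt}\big|_{t=0}M^\gamma_\mathrm{ref}(\psi+th)(h_i) = \gamma M^\gamma_\mathrm{ref}(\psi)(h_i h)$. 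The chain rule applied to $q \in C^1$ then produces the claimed formula.

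For the uniform bound when $G \in \CC$, I would pick $\eps > 0$ and compact $Q \subset \R^k$ with $\supp q \subseteq (\eps,\eps^{-1}) \times Q$. Each $\partial_i q$ is continuous with $\supp\partial_i q \subseteq \supp q$, hence bounded. If $\partial_i q\big(M^\gamma_\mathrm{ref}(\psi)(h_0),\dots,M^\gamma_\mathrm{ref}(\psi)(h_k)\big) \ne 0$, then its first argument $M^\gamma_\mathrm{ref}(\psi)(1)$ lies in $(\eps,\eps^{-1})$, so the total mass satisfies $M^\gamma_\mathrm{ref}(\psi)(1) \le \eps^{-1}$ and therefore $\lvert M^\gamma_\mathrm{ref}(\psi)(h_i h)\rvert \le \lVert h_i\rVert_\infty\lVert h\rVert_\infty\eps^{-1}$; otherwise that summand is zero. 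Summing over $i$ yields $\lvert D_hG(\psi)\rvert \le \gamma\eps^{-1}\lVert h\rVert_\infty\sum_{i=0}^k\lVert\partial_i q\rVert_\infty\lVert h_i\rVert_\infty$, a constant depending only on $G$ (through $q$, the $h_i$, and $\eps$) and on $h$ (through $\lVert h\rVert_\infty$).

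The one genuinely delicate point is that the shift identity must hold off a single $\psi$-null set uniformly over directions, so that the $t$-derivative can be taken pathwise along one fixed realization; this is precisely why Proposition \ref{CGMC}(i) is stated with the ``for every $h$'' quantifier inside the almost-sure statement, and why the extension to the ungrounded space is phrased as a deterministic identity. Everything else is routine dominated-convergence bookkeeping, using only that continuous functions on $M$ are bounded and that each $M^\gamma_\mathrm{ref}(\psi)$ is a finite Borel measure.
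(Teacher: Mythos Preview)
Your proof is correct and follows essentially the same route as the paper: invoke the shift identity from Proposition~\ref{CGMC}(i), differentiate $t \mapsto M^\gamma_\mathrm{ref}(\psi+th)(h_i)$ under the integral by dominated convergence, and then read off the bound from the support constraint on $q$. The only cosmetic difference is that the paper writes out the difference $G(\psi+th)-G(\psi)$ as a telescoping sum and applies the mean-value theorem to each term before passing to the limit, whereas you simply observe that each coordinate map is differentiable and apply the chain rule to $q \in C^1$; your packaging is slightly cleaner, and your explicit remark that the null set is uniform in the direction $h$ (and that the extension to ungrounded $E$ is deterministic) is a point the paper leaves implicit.
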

\begin{proof}
	By part (i) of Proposition \ref{CGMC}, \[
		M^\gamma_\mathrm{ref} (\psi + th) = e^{t \gamma h} M^\gamma_\mathrm{ref} (\psi)
	\]
	for $\mu_\mathrm{ref}$-a.e. $\psi$. Using this, we can compute discrete differences of $G$:
	\begin{align*}
		&G(\psi + th) - G(\psi) \\
		&= q((e^{t \gamma h} M^\gamma_\mathrm{ref} (\psi)) (h_0), \dots, (e^{t \gamma h} M^\gamma_\mathrm{ref} (\psi)) (h_k)) \\
		&\qquad- q(M^\gamma_\mathrm{ref} (\psi) (h_0), \dots, M^\gamma_\mathrm{ref} (\psi) (h_k))  \\
		&= \sum_{i=0}^k \bigg( q(M^\gamma_\mathrm{ref} (\psi)(h_0), \dots, M^\gamma_\mathrm{ref} (\psi) (h_{i-1}), \\
		&\qquad\qquad (e^{t \gamma h} M^\gamma_\mathrm{ref} (\psi)) (h_i), \dots, (e^{t \gamma h} M^\gamma_\mathrm{ref} (\psi)) (h_k))\\
		&\qquad - q(M^\gamma_\mathrm{ref} (\psi)(h_0), \dots, M^\gamma_\mathrm{ref} (\psi) (h_{i}), \\
		&\qquad\qquad (e^{t \gamma h} M^\gamma_\mathrm{ref} (\psi)) (h_{i+1}), \dots, (e^{t \gamma h} M^\gamma_\mathrm{ref} (\psi)) (h_k)) \bigg) \\
		&= t \sum_{i=0}^k \partial_i q (M^\gamma_\mathrm{ref} (\psi) (h_0), \dots, (e^{t'_i \gamma h} M^\gamma_\mathrm{ref} (\psi))(h_i), \dots, (e^{t \gamma h} M^\gamma_\mathrm{ref} (\psi)) (h_k)) \\
		&\qquad \cdot \frac{d}{ds}\bigg\vert_{s=t'_i} (e^{s \gamma h} M^\gamma_\mathrm{ref} (\psi)) (h_i)
	\end{align*}
	where the last equality uses the mean-value theorem with $t'_i \in [0, t]$ for each $i$. We can rewrite the derivative term as \begin{align*}
		\frac{d}{ds} \bigg\vert_{s=t'_i} \int_{\mathring{H}_\mathrm{ref}^{0-}} e^{s \gamma h} h_i \, M^\gamma_\mathrm{ref} (\psi)
	\end{align*}
	which equals $\gamma M^\gamma_\mathrm{ref} (\psi) (e^{t'_i \gamma h} h_i h)$ by dominated convergence. Dividing the expression for $G(\psi+th) - G(\psi)$ by $t$ and taking a limit as $t$ approaches zero, we recover the desired formula for $D_h G$. 
	
	For the last claim, suppose $G \in \CC$. By continuity of $h_i$ and $h$, there is a constant $C$ depending only on $G$ and $h$ such that $\lvert M^\gamma_\mathrm{ref} (\psi) (h_i h) \rvert \leq C M^\gamma_\mathrm{ref} (\psi) (1)$ for all $i$. If $M^\gamma_\mathrm{ref} (\psi) (1)$ is outside of $(\eps, \eps^{-1})$ then all of the partial derivatives of $q$ are zero by the definition of $\CC$. The partial derivatives of $q$ are uniformly bounded above by another constant $C'$, so we conclude from the formula for $D_h G (\psi)$ that \[
		\lvert D_h G (\psi) \rvert \leq \gamma (k+1) C' C \eps^{-1}
	\]
	for all $\psi \in \mathring{H}^{0-}_\mathrm{ref}$. 
\end{proof}

Before proving integration-by-parts formulas for $\nunqf$ and $\nuqf$, we start with similar formulas for grounded and ungrounded CGFs.

\begin{lemma}\label{grounded-cgf-ibp}
	For all continuous $h \in E$ and $G \in \CC$, \begin{equation}\label{grounded-cgf-ibp-formula}
		\int_{\mathring{H}^{0-}_\mathrm{ref}} D_h G(\psi) - D_{\overline{h}} G(\psi) \, \mu_\mathrm{ref} (d\psi) = \int_{\mathring{H}^{0-}_\mathrm{ref}} G(\psi) \langle h, \psi \rangle_E \, \mu_\mathrm{ref} (d\psi)
	\end{equation}
	where $\overline{h}$ is the constant function $\omega_\mathrm{ref} (h) / \omega_\mathrm{ref} (1)$. 
\end{lemma}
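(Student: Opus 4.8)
The plan is to obtain this from the Cameron--Martin quasi-invariance of the grounded CGF law $\mu_\mathrm{ref}$: I would write down the shift identity for the pushforward of $\mu_\mathrm{ref}$ under $\psi \mapsto \psi + t\tilde h$ in a genuine Cameron--Martin direction $\tilde h$, and differentiate it at $t = 0$. The first step is the reduction to such a direction. The explicit formula in Lemma~\ref{frechet} shows that $h \mapsto D_h G(\psi)$ is linear, so $D_h G(\psi) - D_{\overline h} G(\psi) = D_{h - \overline h} G(\psi)$; and on the right-hand side $\langle \overline h, \psi\rangle_E = \overline h\,\langle 1, \psi\rangle_E = 0$ by the definition of the extended inner product on $E$, so $\langle h, \psi\rangle_E = \langle h - \overline h, \psi\rangle_E$. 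Setting $\tilde h := h - \overline h$, which is continuous and has zero $\omega_\mathrm{ref}$-mean and hence lies in $\mathring E = \mathring H^{n/2}_\mathrm{ref}$, the claimed identity becomes
\[
\int_{\mathring H^{0-}_\mathrm{ref}} D_{\tilde h} G(\psi)\, \mu_\mathrm{ref}(d\psi) = \int_{\mathring H^{0-}_\mathrm{ref}} G(\psi)\, \langle \tilde h, \psi\rangle_E\, \mu_\mathrm{ref}(d\psi),
\]
the standard Gaussian integration-by-parts formula for $\mu_\mathrm{ref}$ along the Cameron--Martin direction $\tilde h$, where $\langle \tilde h, \psi\rangle_E$ is the associated first-chaos (Paley--Wiener) random variable, a centered Gaussian of variance $\|\tilde h\|_E^2$.

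Next I would use that $\mathring E$ sits inside the Cameron--Martin space $\mathring H^{(n/2)-}_\mathrm{ref}$ of $\psi$, so that for every $t \in \R$ the pushforward of $\mu_\mathrm{ref}$ under $\psi \mapsto \psi + t\tilde h$ is equivalent to $\mu_\mathrm{ref}$ with density $\exp(t\langle\tilde h,\psi\rangle_E - \tfrac{t^2}{2}\|\tilde h\|_E^2)$. Changing variables gives, for all $t$,
\[
\int G(\psi + t\tilde h)\, \mu_\mathrm{ref}(d\psi) = \int G(\psi)\, \exp(t\langle\tilde h,\psi\rangle_E - \tfrac{t^2}{2}\|\tilde h\|_E^2)\, \mu_\mathrm{ref}(d\psi).
\]
I would then differentiate both sides at $t = 0$. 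On the left, the difference quotient $t^{-1}(G(\psi + t\tilde h) - G(\psi))$ converges $\mu_\mathrm{ref}$-a.e.\ to $D_{\tilde h}G(\psi)$ by Lemma~\ref{frechet}; to pass the limit under the integral I would dominate it, via the mean value theorem, by $\sup_{|t'|\le 1}|D_{\tilde h}G(\psi + t'\tilde h)|$ and argue this is bounded by a fixed constant: since $G \in \CC$ its defining function $q$ has support in $\{M^\gamma_\mathrm{ref}(\cdot)(1) \in (\eps,\eps^{-1})\} \times Q$, and since $\tilde h$ is continuous (hence bounded) on the compact manifold $M$, the relation $M^\gamma_\mathrm{ref}(\psi + t'\tilde h) = e^{t'\gamma\tilde h} M^\gamma_\mathrm{ref}(\psi)$ keeps $M^\gamma_\mathrm{ref}(\psi + t'\tilde h)(1)$ within a fixed multiplicative factor of $M^\gamma_\mathrm{ref}(\psi)(1)$ for $|t'|\le 1$; the uniform bound of Lemma~\ref{frechet} (which is a $\mu_\mathrm{ref}$-a.e.\ statement, hence also holds a.e.\ for the equivalent shifted laws) then gives the required domination. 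On the right, for $|t| \le 1$ the integrand is bounded by $\|q\|_\infty (|\langle\tilde h,\psi\rangle_E| + \|\tilde h\|_E^2)\, e^{|\langle\tilde h,\psi\rangle_E|}$, which lies in $L^1(\mu_\mathrm{ref})$ because $\langle\tilde h,\psi\rangle_E$ is Gaussian and thus has all exponential moments; differentiating under the integral yields $\int G(\psi)\langle\tilde h,\psi\rangle_E\,\mu_\mathrm{ref}(d\psi)$. Equating the two derivatives gives the formula.

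The only genuinely delicate point is the dominated-convergence step for the left-hand derivative: one needs the directional derivative $D_{\tilde h}G$ to stay uniformly bounded along the whole segment $\{\psi + t'\tilde h : |t'|\le 1\}$, not merely at $\psi$, and this is exactly where the compact support of $q$ built into the class $\CC$ and the multiplicative stability of the CGMC total mass $M^\gamma_\mathrm{ref}(\cdot)(1)$ under bounded shifts are essential. Everything else — the reduction to a grounded direction, the Cameron--Martin density formula, and the differentiation of the right-hand side — is routine Gaussian calculus once the identification of $\mathring E$ as a subspace of the Cameron--Martin space and the meaning of $\langle\tilde h,\psi\rangle_E$ are fixed.
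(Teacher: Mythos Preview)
Your proof is correct and follows essentially the same route as the paper: both use the Cameron--Martin shift identity for $\mu_\mathrm{ref}$ along a grounded direction and differentiate at $t=0$, with Lemma~\ref{frechet} supplying the domination on the left and Gaussian exponential integrability on the right. The only cosmetic difference is that the paper splits into the cases $h\in\mathring E$ and $h=1$ by linearity, whereas you pass directly to $\tilde h = h-\overline h$; your treatment of the dominated-convergence step along the segment $\{\psi+t'\tilde h\}$ is in fact slightly more explicit than the paper's, which simply cites the uniform bound in Lemma~\ref{frechet}.
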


\begin{remark}
	In \eqref{grounded-cgf-ibp-formula} we wrote $\langle h, \psi \rangle_E$ even though $\psi$ may not be in $E$.  Instead, inner products of the form $\langle h, \psi \rangle_E$ for $h \in E$ are defined using the Paley-Wiener map. For details of this construction see Section 1.7 of \cite{DP06}.
\end{remark}

\begin{proof}
	Since both sides are linear in $h$ and $E = \mathring{E} \oplus \R$, it suffices to first show that the formula holds if $h \in \mathring{E}$, then show it holds when $h=1$. For $h \in \mathring{E}$, we have by the Cameron-Martin formula that \begin{align*}
		&\int_{\mathring{H}^{0-}_\mathrm{ref}} G(\psi+th) \, \mu_\mathrm{ref} (d\psi) \, dc =\int_{\mathring{H}^{0-}_\mathrm{ref}} G(\psi) \exp \left( -\frac{t^2}{2} \| h \|^2_E + t \langle h, \psi \rangle_E \right) \, \mu_\mathrm{ref} (d\psi) \, dc \;.
	\end{align*}
	Viewing both sides as functions of $t$, we would like to differentiate at $t=0$. The derivative of the integrand on the left-hand side is bounded by the previous lemma, so we can swap the derivative and the integral by the Leibniz integral rule: \begin{align*}
		&\frac{d}{dt} \left[ \int_{\mathring{H}^{0-}_\mathrm{ref}} G(\psi + th) \, \mu_\mathrm{ref} (d\psi) \right] \bigg\vert_{t=0} \\
		&= \int_{\mathring{H}^{0-}_\mathrm{ref}} \frac{d}{dt} \left[ G(\psi + th) \right] \bigg\vert_{t=0} \mu_\mathrm{ref} (d\psi) \\
		&= \int_{\mathring{H}^{0-}_\mathrm{ref}}  D_h G (\psi)) \, \mu_\mathrm{ref} (d\psi) \;.
	\end{align*}
	
	 To apply the same argument to the right-hand side, note that $G(\psi)$ is bounded and the exponential term satisfies \[
		\left\lvert \frac{d}{dt} \exp \left( -\frac{t^2}{2} \| h\|^2_E + t \langle h, \psi \rangle_E \right) \right\rvert \leq C \exp (C' \lvert \langle h, \psi \rangle_E \rvert )
	\]
	for some constants $C, C' > 0$, uniformly over $t$ in a small interval around zero. This upper bound is integrable with respect to $\mu_\mathrm{ref}$ because $\langle h, \psi \rangle_E$ is Gaussian. Therefore, we can apply the Leibniz integral rule to the right-hand side as well, so we obtain 
		\begin{align*}
		&\frac{d}{dt} \left[ \int_{\mathring{H}^{0-}_\mathrm{ref}}G(\psi ) \exp \left( -\frac{t^2}{2} \|h\|^2_E + t \langle h, \psi \rangle_E \right) \, \mu_\mathrm{ref} (d\psi) \right] \bigg\vert_{t=0} \\
		&= \int_{\mathring{H}^{0-}_\mathrm{ref}} \frac{d}{dt} \left[G(\psi) \exp \left( -\frac{t^2}{2} \|h\|^2_E + t \langle h, \psi \rangle_E \right) \right] \bigg\vert_{t=0} \mu_\mathrm{ref} (d\psi)\\
		&= \int_{\mathring{H}^{0-}_\mathrm{ref}} G (\psi) \langle h, \psi \rangle_E \, \mu_\mathrm{ref} (d\psi) \;.
	\end{align*}
	Equating these two derivatives yields \eqref{grounded-cgf-ibp-formula} in the case $h \in \mathring{E}$.
	
	Next suppose $h = 1$. Then $h = \overline{h}$ and $\langle h, \psi \rangle_E = 0$, so \eqref{grounded-cgf-ibp-formula} still holds. 
\end{proof}

From this, the next result for the ungrounded CGF follows quickly.

\begin{lemma}\label{cgf-ibp}
	For all continuous $h \in E$, $G \in \CC$, and $r \in C^\infty (\R)$, \begin{equation}\label{cgf-ibp-formula}
		\int_{H^{0-}_\mathrm{ref}}  D_h (r(\omega_\mathrm{ref} (\psi)) G (\psi)) \, \tilde{\mu}_\mathrm{ref} (d\psi) = \int_{H^{0-}_\mathrm{ref}} r(\omega_\mathrm{ref} (\psi)) G(\psi) \langle h, \psi \rangle_E \, \tilde{\mu}_\mathrm{ref} (d\psi)
\end{equation}
whenever both integrals converge.
\end{lemma}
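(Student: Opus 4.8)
We prove Lemma~\ref{cgf-ibp} by disintegrating the $\sigma$-finite measure $\tilde\mu_\mathrm{ref}$ and reducing to the grounded formula of Lemma~\ref{grounded-cgf-ibp}. Recall $\tilde\mu_\mathrm{ref}$ is the pushforward of $\mu_\mathrm{ref}\otimes dc$ under $(\psi_0,c)\mapsto\psi_0+c$, so every $\tilde\mu_\mathrm{ref}$-integral becomes an iterated integral $\int_\R\int_{\mathring H^{0-}_\mathrm{ref}}(\cdots)\,\mu_\mathrm{ref}(d\psi_0)\,dc$. Two elementary identities drive the computation: first $\omega_\mathrm{ref}(\psi_0+c)=c\,\omega_\mathrm{ref}(1)$, since $\psi_0$ is grounded; second $M^\gamma_\mathrm{ref}(\psi_0+c)=e^{\gamma c}M^\gamma_\mathrm{ref}(\psi_0)$ from Proposition~\ref{CGMC}, which shows that for each fixed $c$ the functional $\psi_0\mapsto G(\psi_0+c)$ again lies in $\CC$ and, crucially, that for $\mu_\mathrm{ref}$-a.e.\ $\psi_0$ the set $\{c:G(\psi_0+c)\neq 0\}$ is contained in an interval of length $\tfrac{2}{\gamma}\log(1/\eps)$ centered near $-\gamma^{-1}\log M^\gamma_\mathrm{ref}(\psi_0)(1)$.

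Next I would split $h=\mathring h+\overline h$ with $\overline h$ the constant $\omega_\mathrm{ref}(h)/\omega_\mathrm{ref}(1)$ and $\mathring h\in\mathring E$. Adding $t\mathring h$ shifts only the $\psi_0$-coordinate and leaves $\omega_\mathrm{ref}(\psi)$ fixed, while adding $t\overline h$ shifts only $c$; hence, by the product and chain rules (legitimate since $r\in C^\infty$ and $G,D_hG$ are differentiable along $E$ by Lemma~\ref{frechet}), in the $(\psi_0,c)$ coordinates one has $D_h\big(r(\omega_\mathrm{ref}(\psi))G(\psi)\big)=r(c\,\omega_\mathrm{ref}(1))\,D_{\mathring h}G(\psi_0+c)+\overline h\,\partial_c\big(r(c\,\omega_\mathrm{ref}(1))G(\psi_0+c)\big)$. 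For the first term, apply Lemma~\ref{grounded-cgf-ibp} to $G(\cdot+c)\in\CC$ in the grounded direction $\mathring h$ (so the $\overline{\mathring h}$-correction vanishes), giving $\int_{\mathring H^{0-}_\mathrm{ref}}D_{\mathring h}G(\psi_0+c)\,\mu_\mathrm{ref}(d\psi_0)=\int_{\mathring H^{0-}_\mathrm{ref}}G(\psi_0+c)\langle\mathring h,\psi_0\rangle_E\,\mu_\mathrm{ref}(d\psi_0)$; multiplying by $r(c\,\omega_\mathrm{ref}(1))$, integrating $dc$, and re-folding the disintegration produces exactly $\int r(\omega_\mathrm{ref}(\psi))G(\psi)\langle h,\psi\rangle_E\,\tilde\mu_\mathrm{ref}(d\psi)$, where we used $\langle\mathring h,\psi\rangle_E=\langle h,\psi\rangle_E$ because $\langle\cdot,\cdot\rangle_E$ annihilates the constant part of both $h$ and $\psi$. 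For the second term, the inner integral $\int_\R\partial_c\big(r(c\,\omega_\mathrm{ref}(1))G(\psi_0+c)\big)\,dc$ vanishes by the fundamental theorem of calculus, since the integrand has compact $c$-support for a.e.\ $\psi_0$; thus the $\overline h$-contribution is $0$, matching the absence of any such term on the right-hand side of Equation~\ref{cgf-ibp-formula}. Summing the two contributions yields Equation~\ref{cgf-ibp-formula}.

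The step requiring the most care is making this rigorous at the level of the $\sigma$-finite $\tilde\mu_\mathrm{ref}$: justifying Fubini for the iterated integrals and the passage of $D_h$ under the $dc$-integral as the decomposition $D_{\mathring h}+\overline h\,\partial_c$. This is where the structure of $\CC$ is essential. By Lemma~\ref{frechet}, $G$ and $D_hG$ are bounded; by the observation above they are, for a.e.\ $\psi_0$, supported in a $c$-interval of length at most $\tfrac{2}{\gamma}\log(1/\eps)$; the Gaussian $\langle\mathring h,\psi_0\rangle_E$ has all $L^p(\mu_\mathrm{ref})$ norms finite; and on that bounded $c$-window $|r(c\,\omega_\mathrm{ref}(1))|$ is dominated by a power of $M^\gamma_\mathrm{ref}(\psi_0)(1)$, whose moments of every negative order (and every order below $2n/\gamma^2$) are finite by Proposition~\ref{CGMC}(ii). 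Combining these via Hölder's inequality gives absolute convergence of all integrals appearing in the argument, so the applications of Tonelli/Fubini and of the fundamental theorem of calculus are justified, completing the proof.
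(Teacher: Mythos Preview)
Your proof is correct and follows essentially the same approach as the paper: decompose $h=\mathring h+\overline h$, apply Lemma~\ref{grounded-cgf-ibp} in the grounded direction after disintegrating $\tilde\mu_\mathrm{ref}$, and observe that the constant-direction contribution vanishes (you phrase this via the fundamental theorem of calculus with compact $c$-support, the paper phrases it as translation invariance of $\tilde\mu_\mathrm{ref}$---these are the same observation). One small caveat: your claim that $|r(c\,\omega_\mathrm{ref}(1))|$ is dominated by a power of $M^\gamma_\mathrm{ref}(\psi_0)(1)$ on the relevant $c$-window fails for general $r\in C^\infty(\R)$ (take $r(x)=e^{x^2}$), though it holds for the exponential $r$ used in the applications; the paper sidesteps this by simply writing ``whenever they converge,'' so you are no less rigorous here.
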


\begin{proof}
	As in the previous lemma, suppose first that $h \in \mathring{E}$. Then \[
		D_h (r(\omega_\mathrm{ref} (\psi)) G (\psi)) = r(\omega_\mathrm{ref} (\psi)) D_h G(\psi)
	\]
	by the Leibniz rule. This means that the left-hand side of the formula in the lemma is \[
		\int_\R r(c \omega_\mathrm{ref} (1)) \int_{\mathring{H}^{0-}_\mathrm{ref}} D_h G(\psi+c) \, \mu_\mathrm{ref} (d\psi) \, dc
	\]
	and the right-hand side is \[
		\int_\R r(c \omega_\mathrm{ref} (1)) \int_{\mathring{H}^{0-}_\mathrm{ref}} G(\psi+c) \langle h, \psi\rangle_E \, \mu_\mathrm{ref} (d\psi) \, dc \;.
	\]
	Since $G(\cdot + c)$ can be written as a functional in $\CC$, the previous lemma allows us to equate the inner integrals for fixed $c$, so the outer integrals are equal whenever they converge.
	
	Next suppose $h = 1$. Provided they converge, both sides of the formula in the lemma are zero by translation invariance of $\tilde{\mu}_\mathrm{ref}$, so equality still holds.
\end{proof}

We will now use this to prove similar formulas for $\nunqf$ and $\nuqf$, which will be central to the proof of Theorem \ref{main-theorem}.

\begin{theorem}[Integration-by-parts for NQF]\label{nqf-ibp}
	For all continuous $h \in E$ and $G \in \CC$,
	 \begin{align*}
		\int_{H^{0-}_\mathrm{ref}} G(\psi) \langle h, \psi \rangle_E \, \nunqf (d\psi) = &\int_{H^{0-}_\mathrm{ref}} D_h G (\psi) + \frac{2 \gamma Q_\mathrm{ref} (1)}{n \sigma^2} G(\psi) \frac{M^\gamma_\mathrm{ref} (\psi) (fh)}{M^\gamma_\mathrm{ref} (\psi) (f)} \\
		&- \frac{2\gamma Q_\mathrm{ref} \omega_\mathrm{ref} (h)}{n \sigma^2} G(\psi)  \, \nunqf (d\psi)
	\end{align*}
\end{theorem}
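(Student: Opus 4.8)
The plan is to realize $\nunqf$ as an explicitly weighted copy of the law $\tilde{\mu}_\mathrm{ref}$ of an ungrounded CGF, and then transport the integration-by-parts identity of Lemma \ref{cgf-ibp} through that weight. The key point is that the weight factors into a piece that can be absorbed into the class $\CC$ together with a smooth function of $\omega_\mathrm{ref}(\psi)$, which is exactly the structure Lemma \ref{cgf-ibp} is designed to handle.

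Write $\alpha \coloneq 2 Q_\mathrm{ref}(1)/(n\sigma^2)$ and $\beta \coloneq 2\gamma Q_\mathrm{ref}/(n\sigma^2) = Q_\mathrm{ref}\sqrt{2 a_n/\sigma^2}$. First I would rewrite $\nunqf$: starting from Equation \ref{nqf-measure-calc}, substituting $\omega = e^{\gamma c} M^\gamma_\mathrm{ref}(\psi)$ and using $\omega_\mathrm{ref}(\psi + c) = c\,\omega_\mathrm{ref}(1)$ for grounded $\psi$, the same cancellation $\gamma\alpha = \beta\,\omega_\mathrm{ref}(1)$ already used to show the density is $c$-independent shows that, with $\psi$ now denoting the ungrounded field,
\[
	\nunqf(d\psi) = M^\gamma_\mathrm{ref}(\psi)(f)^{\alpha}\, e^{-\beta\,\omega_\mathrm{ref}(\psi)}\, \tilde{\mu}_\mathrm{ref}(d\psi) \;.
\]
Next I would set $\hat G(\psi) \coloneq G(\psi)\, M^\gamma_\mathrm{ref}(\psi)(f)^{\alpha}$ and $r(x) \coloneq e^{-\beta x}$, so that $G(\psi)\,\nunqf(d\psi) = r(\omega_\mathrm{ref}(\psi))\,\hat G(\psi)\,\tilde{\mu}_\mathrm{ref}(d\psi)$, and check that $\hat G \in \CC$. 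This is the one step requiring care: since $M$ is compact and $f$ is smooth and positive, $f$ is bounded above and below by positive constants, so on the support of $G$ (where $M^\gamma_\mathrm{ref}(\psi)(1) \in (\eps,\eps^{-1})$) the quantity $M^\gamma_\mathrm{ref}(\psi)(f)$ lies in a fixed interval bounded away from $0$ and $\infty$; appending $f$ to the list of test functions of $G$ and multiplying the profile $q$ by a smooth cutoff of $y \mapsto y^{\alpha}$ supported away from $y=0$ then exhibits $\hat G$ in the form \ref{test-functional} with a compactly supported $C^2$ profile.

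With this in hand I would apply Lemma \ref{cgf-ibp} taking its $G$ to be $\hat G$ and its $r$ to be $r$. Its right-hand side is precisely $\int G(\psi)\langle h,\psi\rangle_E\,\nunqf(d\psi)$, the left-hand side of the theorem. For its left-hand side I would expand $D_h\bigl(r(\omega_\mathrm{ref}(\psi))\,\hat G(\psi)\bigr)$ by the product rule: since $\omega_\mathrm{ref}(\psi+th) = \omega_\mathrm{ref}(\psi) + t\,\omega_\mathrm{ref}(h)$ one has $D_h\, r(\omega_\mathrm{ref}(\psi)) = -\beta\,\omega_\mathrm{ref}(h)\, r(\omega_\mathrm{ref}(\psi))$, and Lemma \ref{frechet} applied to $\hat G$ (differentiating the appended $y^{\alpha}$ slot, on whose relevant range the cutoff is constant) gives $D_h\hat G(\psi) = M^\gamma_\mathrm{ref}(\psi)(f)^{\alpha}\, D_h G(\psi) + \gamma\alpha\, \frac{M^\gamma_\mathrm{ref}(\psi)(fh)}{M^\gamma_\mathrm{ref}(\psi)(f)}\, \hat G(\psi)$. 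Combining and using $M^\gamma_\mathrm{ref}(\psi)(f)^{\alpha}\, r(\omega_\mathrm{ref}(\psi))\, \tilde{\mu}_\mathrm{ref}(d\psi) = \nunqf(d\psi)$, the left-hand side becomes
\[
	\int_{H^{0-}_\mathrm{ref}} \Bigl( D_h G(\psi) + \gamma\alpha\, G(\psi)\, \tfrac{M^\gamma_\mathrm{ref}(\psi)(fh)}{M^\gamma_\mathrm{ref}(\psi)(f)} - \beta\,\omega_\mathrm{ref}(h)\, G(\psi) \Bigr)\, \nunqf(d\psi) \;,
\]
and substituting $\gamma\alpha = 2\gamma Q_\mathrm{ref}(1)/(n\sigma^2)$ and $\beta = 2\gamma Q_\mathrm{ref}/(n\sigma^2)$ produces the stated identity. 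The integrability needed to invoke Lemma \ref{cgf-ibp} and to interchange derivative and integral comes from Lemma \ref{nqf-int}: every term is supported on $\{M^\gamma_\mathrm{ref}(\psi)(1) \in (\eps,\eps^{-1})\}$, a set of finite $\nunqf$-measure; there $G$, $D_h G$ (by the second part of Lemma \ref{frechet}), $\omega_\mathrm{ref}(h)\,G$ and the ratio $M^\gamma_\mathrm{ref}(\psi)(fh)/M^\gamma_\mathrm{ref}(\psi)(f)$ (using $f \geq \min f > 0$) are bounded, while $G(\psi)\langle h,\psi\rangle_E$ is integrable by the second assertion of Lemma \ref{nqf-int} applied with $Z(\psi) = \langle h,\psi\rangle_E$.

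The part I expect to be the genuine obstacle is the structural reduction above — recognizing that $\nunqf$ is an ungrounded CGF weighted by a functional that factors exactly as an element of $\CC$ times a smooth function of $\omega_\mathrm{ref}(\psi)$, which relies both on the positivity and smoothness of $f$ (so that $M^\gamma_\mathrm{ref}(\psi)(f)$ is comparable to $M^\gamma_\mathrm{ref}(\psi)(1)$ and the cutoff of $y^{\alpha}$ does not interfere with the support of $G$) and on the exact identity $\gamma\alpha = \beta\,\omega_\mathrm{ref}(1)$. Once that reduction is in place, the rest is the product rule plus the routine integrability bookkeeping supplied by Lemmas \ref{nqf-int} and \ref{frechet}.
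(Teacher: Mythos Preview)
Your proposal is correct and follows essentially the same route as the paper: factor the $\nunqf$-density as $M^\gamma_\mathrm{ref}(\psi)(f)^{\alpha}\,r(\omega_\mathrm{ref}(\psi))$ with $r(x)=e^{-\beta x}$, apply Lemma~\ref{cgf-ibp} to the product $\hat G = G\cdot M^\gamma_\mathrm{ref}(\psi)(f)^{\alpha}$, expand $D_h$ by the Leibniz rule via Lemma~\ref{frechet}, and reabsorb the density. You are in fact slightly more careful than the paper in explicitly verifying $\hat G\in\CC$ by appending $f$ to the test-function list and cutting off $y\mapsto y^{\alpha}$; the paper simply asserts the functional is ``of the form needed for Lemma~\ref{cgf-ibp}.''
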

\begin{proof}
	Let us first check that both sides are integrable. Up to a constant multiple, we can bound $G(\psi)$ and $D_h G$ by $\one_{M^\gamma_\mathrm{ref} (\psi) (1) \in [\eps, \eps^{-1}]}$. This gives integrability of all the terms on the right-hand side. For the left-hand side, we need to show that $ \langle h, \psi \rangle_E \one_{M^\gamma_\mathrm{ref} (\psi) (1) \in [\eps, \eps^{-1}]}$ is integrable with respect to $\nunqf$. In fact, since $\langle h, \psi \rangle_E$ is Gaussian we know this is in $L^p (\nunqf)$ for all $p \geq 1$ by Lemma \ref{nqf-int}.
	
	By the definition of $\nunqf$, the left-hand side equals \[
		\int_{H^{0-}_\mathrm{ref}} G(\psi) \langle h, \psi \rangle_E  (M^\gamma_\mathrm{ref} (\psi - \frac{\omega_\mathrm{ref} (\psi)}{\omega_\mathrm{ref} (1)})(f))^{2Q_\mathrm{ref} (1)) / (n \sigma^2)} \, \tilde{\mu}_\mathrm{ref} (d\psi)		
	\]
	which is of the form needed for Lemma \ref{cgf-ibp} with functional \[
		F(\psi) = G(\psi) M^\gamma_\mathrm{ref} (\psi) (f)^{2Q_\mathrm{ref} (1) / (n \sigma^2)}
	\]
	and \[
		r(x) = e^{-2 \gamma Q_\mathrm{ref} x / (n \sigma^2)} \;.
	\]
	Note that we are using the identity $Q_\mathrm{ref} (1) / \omega_\mathrm{ref} (1) = Q_\mathrm{ref}$ to deduce $r$. At first glance it seems that $F$ is only in $\tilde{\CC}$ and not $\CC$ because the exponential is not compactly supported. However, if $M^\gamma_\mathrm{ref} (\psi) (f)$ is sufficiently large then $M^\gamma_\mathrm{ref} (\psi) (1)$ will be large enough to make $G(\psi)$, and hence $F(\psi)$, zero. Thus, $F$ is only nonzero when $M^\gamma_\mathrm{ref} (\psi) (f)$ is in a compact interval, and so $F$ can be written as a functional in $\CC$.
	
	Using Lemma \ref{frechet} and the Leibniz rule, we can compute the derivative \begin{align*}
		&D_h (r(\omega_\mathrm{ref} (\psi)) F(\psi)) = \\
&D_h G (\psi) e^{-2 \gamma Q_\mathrm{ref} \omega_\mathrm{ref} (\psi) / (n \sigma^2)} M^\gamma_\mathrm{ref} (\psi) (f)^{2Q_\mathrm{ref} (1) / (n \sigma^2)} \\
		&+ \frac{2 \gamma Q_\mathrm{ref} (1)}{n \sigma^2} G (\psi) e^{-2 \gamma Q_\mathrm{ref} \omega_\mathrm{ref} (\psi) / (n \sigma^2)} M^\gamma_\mathrm{ref} (\psi) (f)^{2Q_\mathrm{ref} (1) / (n \sigma^2) - 1} M^\gamma_\mathrm{ref} (\psi) (fh) \\
		&- \frac{2\gamma Q_\mathrm{ref}}{n \sigma^2} G (\psi) e^{-2 \gamma Q_\mathrm{ref} \omega_\mathrm{ref} (\psi) / (n \sigma^2)} M^\gamma_\mathrm{ref} (\psi) (f)^{2Q_\mathrm{ref} (1) / (n \sigma^2)} \omega_\mathrm{ref} (h) \;.
	\end{align*}
	Applying Lemma \ref{cgf-ibp}, we thus obtain the integral \begin{align*}
		&\int_{H^{0-}_\mathrm{ref}} D_h G (\psi) e^{-2 \gamma Q_\mathrm{ref} \omega_\mathrm{ref} (\psi) / (n \sigma^2)} M^\gamma_\mathrm{ref} (\psi) (f)^{2Q_\mathrm{ref} (1) / (n \sigma^2)} \\
		&+ \frac{2 \gamma Q_\mathrm{ref} (1)}{n \sigma^2} G (\psi) e^{-2 \gamma Q_\mathrm{ref} \omega_\mathrm{ref} (\psi) / (n \sigma^2)} M^\gamma_\mathrm{ref} (\psi) (f)^{2Q_\mathrm{ref} (1) / (n \sigma^2) - 1} M^\gamma_\mathrm{ref} (\psi) (fh) \\
		&- \frac{2\gamma Q_\mathrm{ref}}{n \sigma^2} G (\psi) e^{-2 \gamma Q_\mathrm{ref} \omega_\mathrm{ref} (\psi) / (n \sigma^2)} M^\gamma_\mathrm{ref} (\psi) (f)^{2Q_\mathrm{ref} (1) / (n \sigma^2)} \omega_\mathrm{ref} (h) \, \tilde{\mu}_\mathrm{ref} (d\psi)
	\end{align*}
	Reabsorbing the density of $\nunqf$ from the integrand, this becomes \begin{align*}
		\int_{H^{0-}_\mathrm{ref}} D_h G (\psi) + \frac{2 \gamma Q_\mathrm{ref} (1)}{n \sigma^2} G (\psi) \frac{M^\gamma_\mathrm{ref} (\psi) (fh)}{M^\gamma_\mathrm{ref} (\psi) (f)} - \frac{2\gamma Q_\mathrm{ref}}{n \sigma^2} G (\psi) \omega_\mathrm{ref} (h) \, \nunqf (d\psi)
	\end{align*}
	as desired.
\end{proof}

\begin{theorem}[Integration-by-parts for LQF]\label{qf-ibp}
	For all continuous $h \in E$ and $G \in \CC$, \begin{align*}
		\int_{H_\mathrm{ref}^{0-}} G(\psi) \langle h, \psi \rangle_E \, \nuqf(d\psi) = &\int_{H_\mathrm{ref}^{0-}} D_h G (\psi) + \frac{2 \gamma}{n \sigma^2} G(\psi) M^\gamma_\mathrm{ref} (\psi) (fh) \\
		&- \rho Q_\mathrm{ref} \sqrt{2a_n / \sigma^2} G(\psi) \omega_\mathrm{ref} (h) \, \nuqf (d\psi) \;.
	\end{align*}
\end{theorem}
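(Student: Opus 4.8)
The plan is to follow the proof of Theorem~\ref{nqf-ibp} almost verbatim, replacing the power $M^\gamma_\mathrm{ref}(\psi)(f)^{2Q_\mathrm{ref}(1)/(n\sigma^2)}$ appearing in the NQF density by the exponential factor $\tilde F(\psi) \coloneq \exp\!\big(\tfrac{2}{n\sigma^2}M^\gamma_\mathrm{ref}(\psi)(f)\big)$ that appears in Equation~\ref{qf-measure-calc}. Writing $\psi = \psi_0 + c$ with $\psi_0$ grounded and using $\omega_\mathrm{ref}(\psi) = c\,\omega_\mathrm{ref}(1)$ together with $M^\gamma_\mathrm{ref}(\psi)(f) = e^{\gamma c}M^\gamma_\mathrm{ref}(\psi_0)(f)$, the Radon--Nikodym density of $\nuqf$ with respect to $\tilde\mu_\mathrm{ref}$ factors as $r(\omega_\mathrm{ref}(\psi))\,\tilde F(\psi)$, where $r(x) = \exp\!\big(-\rho Q_\mathrm{ref}\sqrt{2a_n/\sigma^2}\,x\big) \in C^\infty(\R)$. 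Hence the left-hand side of the claimed identity equals
\[
	\int_{H^{0-}_\mathrm{ref}} r(\omega_\mathrm{ref}(\psi))\,\big[G(\psi)\tilde F(\psi)\big]\,\langle h,\psi\rangle_E\,\tilde\mu_\mathrm{ref}(d\psi),
\]
which is in exactly the form handled by Lemma~\ref{cgf-ibp} (with $G$ replaced by $G\tilde F$ and the above choice of $r$).

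First I would check integrability. Since $f\le 0$ we have $\tilde F\in(0,1]$ pointwise, so $|G\tilde F|\lesssim \one_{\{M^\gamma_\mathrm{ref}(\psi)(1)\in(\eps,\eps^{-1})\}}$ on the support of $G\in\CC$; because $\langle h,\psi\rangle_E$ is a centered Gaussian under $\mu_\mathrm{ref}$, the second claim of Lemma~\ref{qf-int} applied to $Z(\cdot)=\langle h,\cdot\rangle_E$ then gives $G(\psi)\tilde F(\psi)\langle h,\psi\rangle_E\in L^1(\nuqf)$, and the bound $|M^\gamma_\mathrm{ref}(\psi)(fh)|\lesssim M^\gamma_\mathrm{ref}(\psi)(1)\le\eps^{-1}$ on the support of $G$ shows that $D_hG$, $G\,M^\gamma_\mathrm{ref}(\psi)(fh)$ and $G$ are all $\nuqf$-integrable. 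As in the proof of Theorem~\ref{nqf-ibp}, $G\tilde F$ is not literally an element of $\CC$, but it is bounded with bounded Cameron--Martin derivative on $\{M^\gamma_\mathrm{ref}(\psi)(1)\in(\eps,\eps^{-1})\}$, and this set is stable under the perturbations $\psi\mapsto\psi+th$ ($|t|\le 1$) used inside the proof of Lemma~\ref{cgf-ibp}, since $M^\gamma_\mathrm{ref}(\psi+th)(1)=\int e^{t\gamma h}\,M^\gamma_\mathrm{ref}(\psi)$ stays comparable to $M^\gamma_\mathrm{ref}(\psi)(1)$ uniformly in $t$ by boundedness of the continuous function $h$; so the argument of Lemma~\ref{cgf-ibp} goes through with $G$ replaced by $G\tilde F$.

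Next I would apply Lemma~\ref{cgf-ibp} and expand $\int D_h\big(r(\omega_\mathrm{ref}(\psi))G(\psi)\tilde F(\psi)\big)\,\tilde\mu_\mathrm{ref}(d\psi)$ by the Leibniz rule into three pieces. Differentiating $r(\omega_\mathrm{ref}(\psi))$ produces $r'(\omega_\mathrm{ref}(\psi))\,\omega_\mathrm{ref}(h)=-\rho Q_\mathrm{ref}\sqrt{2a_n/\sigma^2}\,\omega_\mathrm{ref}(h)\,r(\omega_\mathrm{ref}(\psi))$; differentiating $G$ produces $D_hG(\psi)$ as computed in Lemma~\ref{frechet}; and differentiating $\tilde F$ produces, by the same discrete-difference computation as in the proof of Lemma~\ref{frechet} applied to the single pairing $M^\gamma_\mathrm{ref}(\psi)(f)$, the factor $\tfrac{2\gamma}{n\sigma^2}M^\gamma_\mathrm{ref}(\psi)(fh)\,\tilde F(\psi)$. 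Summing the three contributions, reabsorbing $r(\omega_\mathrm{ref}(\psi))\tilde F(\psi)$ back into the density of $\nuqf$, and using $\gamma=n\sqrt{a_n\sigma^2}/\sqrt2$ so that $\tfrac{2\gamma}{n\sigma^2}=\sqrt{2a_n/\sigma^2}$, yields exactly the stated formula.

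Two points warrant care. The restriction $h\in\mathring E$ (rather than $h\in E$, as in Theorems~\ref{nqf-ibp} and \ref{cgf-ibp}) is necessary: the constant direction $h=1$ was dispatched in Lemma~\ref{cgf-ibp} via the translation invariance of $\tilde\mu_\mathrm{ref}$, which $\nuqf$ lacks; for $h\in\mathring E$ one in fact has $\omega_\mathrm{ref}(h)=0$, so the last term in the statement vanishes, but I keep it to match the form of Theorem~\ref{nqf-ibp}. The only genuinely new feature relative to NQF is that the density carries an exponential of the linear functional $M^\gamma_\mathrm{ref}(\psi)(f)$ rather than a power of it, and the hypothesis $f\le 0$ is precisely what keeps this factor --- and its perturbations $\tilde F(\psi+th)$ --- bounded, so no estimates beyond those recorded in Lemma~\ref{qf-int} are needed. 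I expect the verification that Lemma~\ref{cgf-ibp} still applies after replacing $G$ by $G\tilde F$ to be the only mildly delicate step, and it is routine given the localization built into $\CC$.
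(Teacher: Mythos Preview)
Your proposal is correct and follows the same approach as the paper's proof: write the LQF density as $r(\omega_\mathrm{ref}(\psi))\tilde F(\psi)$ with $\tilde F(\psi)=\exp\!\big(\tfrac{2}{n\sigma^2}M^\gamma_\mathrm{ref}(\psi)(f)\big)$ and $r(x)=e^{-\rho Q_\mathrm{ref}\sqrt{2a_n/\sigma^2}\,x}$, apply Lemma~\ref{cgf-ibp} with $G$ replaced by $G\tilde F$, expand $D_h\big(r(\omega_\mathrm{ref}(\psi))G\tilde F\big)$ by the Leibniz rule, and reabsorb the density into $\nuqf$. Your explicit justification that Lemma~\ref{cgf-ibp} still applies to $G\tilde F\notin\CC$ (using $f\le 0$ to keep $\tilde F\le 1$ and the localization built into $\CC$) is a point the paper leaves implicit, just as it does in the NQF case.
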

\begin{proof}
	The proof is very similar to that of the previous result. In particular, the integrability of both sides follows from an analogous argument using Lemma \ref{qf-int} instead of Lemma \ref{nqf-int}.
	
	By the definition of $\nuqf$, the left-hand side equals \begin{equation}\label{qf-ibp-calculation}
		\int_{H^{0-}_\mathrm{ref}} e^{-\rho Q_\mathrm{ref} \sqrt{2a_n / \sigma^2} \omega_\mathrm{ref}(\psi)} \exp \left(\frac{2}{n \sigma^2} M^\gamma_\mathrm{ref} (\psi) (f) \right) G(\psi) \langle h, \psi \rangle_E \, \tilde{\mu}_\mathrm{ref} (d\psi) \;.
	\end{equation}
	Consider the functional \[
		F(\psi) = G(\psi) \exp \left(\frac{2}{n \sigma^2} M^\gamma_\mathrm{ref} (\psi) (f) \right) \;.
	\]
	As before, we note that if $M^\gamma_\mathrm{ref} (f)$ is too large in absolute value then $M^\gamma_\mathrm{ref} (1)$ will be large enough that $G(\psi)$ is zero. Thus, $F$ lies in $\CC$. Applying Lemma \ref{frechet} with $r(x) = e^{-\rho Q_\mathrm{ref} \sqrt{2a_n / \sigma^2} x}$, we use the Leibniz rule to find \begin{align*}
		&D_h (r(\omega_\mathrm{ref} (\psi)) F (\psi)) =\\
		&D_h G (\psi) e^{-\rho Q_\mathrm{ref} \sqrt{2a_n / \sigma^2} \omega_\mathrm{ref} (\psi)} \exp \left(\frac{2}{n \sigma^2} M^\gamma_\mathrm{ref} (\psi) (f) \right) \\
		&+ \gamma \frac{2}{n \sigma^2} G(\psi) e^{-\rho Q_\mathrm{ref} \sqrt{2a_n / \sigma^2} \omega_\mathrm{ref} (\psi)} \exp \left(\frac{2}{n \sigma^2} M^\gamma_\mathrm{ref} (\psi) (f) \right) M^\gamma_\mathrm{ref} (\psi) (fh) \\
		&- \rho Q_\mathrm{ref} \sqrt{2a_n / \sigma^2} G(\psi) e^{-\rho Q_\mathrm{ref} \sqrt{2a_n / \sigma^2} \omega_\mathrm{ref} (\psi)} \exp \left(\frac{2}{n \sigma^2} M^\gamma_\mathrm{ref} (\psi) (f) \right) \omega_\mathrm{ref} (h) \;.
	\end{align*}
	Applying Lemma \ref{cgf-ibp} to \eqref{qf-ibp-calculation} yields \begin{align*}
		&\int_{H^{0-}_\mathrm{ref}} D_h G (\psi) e^{-\rho Q_\mathrm{ref} \sqrt{2a_n / \sigma^2} \omega_\mathrm{ref} (\psi)} \exp \left(\frac{2}{n \sigma^2} M^\gamma_\mathrm{ref} (\psi) (f) \right) \\
		&+ \gamma \frac{2}{n \sigma^2} G(\psi) e^{-\rho Q_\mathrm{ref} \sqrt{2a_n / \sigma^2} \omega_\mathrm{ref} (\psi)} \exp \left(\frac{2}{n \sigma^2} M^\gamma_\mathrm{ref} (\psi) (f) \right) M^\gamma_\mathrm{ref} (\psi) (fh) \\
		&- \rho Q_\mathrm{ref} \sqrt{2a_n / \sigma^2} G(\psi) e^{-\rho Q_\mathrm{ref} \sqrt{2a_n / \sigma^2} \omega_\mathrm{ref} (\psi)} \exp \left(\frac{2}{n \sigma^2} M^\gamma_\mathrm{ref} (\psi) (f) \right) \omega_\mathrm{ref} (h) \, \tilde{\mu}_\mathrm{ref} (d\psi) \\
		=& \int_{H^{0-}_\mathrm{ref}} G(\psi) e^{-\rho Q_\mathrm{ref} \sqrt{2a_n / \sigma^2} \omega_\mathrm{ref}(\psi)} \exp \left(\frac{2}{n \sigma^2} M^\gamma_\mathrm{ref} (\psi) (f) \right) \langle h, \psi \rangle_E \, \tilde{\mu}_\mathrm{ref} (d\psi) \;.
	\end{align*}
	As before, turning these into integrals with respect to $\nuqf$ by absorbing the density $d\nuqf / d\tilde{\mu}_\mathrm{ref}$ on both sides finishes the proof.
\end{proof}

\section{Dirichlet form analysis}\label{dirichlet}

In this section we prove Theorem \ref{main-theorem}. The main idea will be to exploit the well-known correspondence between Dirichlet forms and symmetric Markov processes. For convenience, we start by recalling some elements of this correspondence and explaining how they will be used in the proof. We generally follow the notation of \cite{FOT11}, where many more details can be found.

Let $X$ be a locally compact separable metric space and let $m$ be a positive $\sigma$-finite Radon measure on $X$. A non-negative symmetric bilinear form $\CE$ on $L^2 (X, m)$ with dense domain $\CD [\CE]$ is simply referred to as a symmetric form. A sequence $(u_n)_{n \geq 1}$ in $\CD [\CE]$ is $\CE$-Cauchy if $\CE(u_n - u_m, u_n - u_m) \to 0$ as $n, m \to \infty$, and it $\CE$-converges to $u \in \CD[\CE]$ if $\CE (u_n - u, u_n - u) \to 0$ as $n \to \infty$.

\begin{definition}
	Let $\CE$ be a symmetric form. 
	\begin{enumerate}[(i)]
		\item $\CE$ is \emph{closed} if every sequence in $\CD [\CE]$ which is both Cauchy (in $L^2$) and $\CE$-Cauchy also $\CE$-converges to an element of $\CD [\CE]$. It is \emph{closable} if it has a closed extension.
		\item For $\eps > 0$, an \emph{$\eps$-Markovian function} is an increasing $1$-Lipschitz function $\tau_\eps: \R \to \R$ such that $\tau_\eps (t) = t$ for $t \in [0, 1]$ and $\tau_\eps (t) \in [-\eps, 1+\eps]$ for all $t \in \R$.
		\item $\CE$ is \emph{Markovian} if for all $\eps > 0$ there is an $\eps$-Markovian function $\tau_\eps$ such that for all $u \in \CD [\CE]$, $\tau_\eps (u) \in \CD [\CE]$ with $\CE (\tau_\eps (u), \tau_\eps (u)) \leq \CE (u, u)$.
	\end{enumerate}
	If $\CE$ is both closed and Markovian, it is called a \emph{Dirichlet form}. 
\end{definition}

Some results in \cite{FOT11} also assume that $\supp m = X$. However, this makes no difference for us in practice because any Dirichlet form on $L^2 (X, m)$ can be identified with one on the isometric space $L^2 (\supp m, m)$ which inherits all of its relevant properties. With this in mind, we omit this assumption without loss of generality as in \cite{DS22}. The reason why we care about Dirichlet forms in this context is the following correspondence, which combines several results from Chapter 1 of \cite{FOT11}.

\begin{proposition}
	There is a one-to-one correspondence between closed symmetric forms $\CE$ and non-positive self-adjoint operators $A$ on $L^2 (X, m)$, where an operator $A$ corresponds to the form $\CE (u, v) = (\sqrt{-A} u, \sqrt{-A} v)$. In this correspondence, $\CE$ is Markovian (and hence a Dirichlet form) if and only if the strongly continuous semigroup $(T_t)_{t \geq 0}$ generated by $A$ is Markovian. 
\end{proposition}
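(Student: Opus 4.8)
The plan is to treat the statement as two separate assertions: (a) the bijective correspondence between closed symmetric forms on $L^2(X,m)$ and non-positive self-adjoint operators $A$, with $\CE(u,v) = (\sqrt{-A}\,u,\sqrt{-A}\,v)$; and (b) the equivalence, inside this correspondence, between the Markovian property of the form and the Markovian property of the semigroup $(T_t)$. Assertion (a) is the classical ``first representation theorem'' for symmetric forms combined with the spectral theorem, and (b) is its order-theoretic refinement; I would cite \cite{FOT11}, Sections 1.3--1.4, but record the skeleton here because the rest of the section relies on knowing exactly what the correspondence produces.

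For (a), the map from operators to forms is immediate: given a non-positive self-adjoint $A$, the spectral theorem yields the non-negative self-adjoint square root $\sqrt{-A}$, and I would set $\CE(u,v) = (\sqrt{-A}\,u,\sqrt{-A}\,v)$ with $\CD[\CE] = \CD(\sqrt{-A})$. Symmetry and non-negativity are clear; the domain is dense since $A$ is densely defined and $\CD(A)\subseteq\CD(\sqrt{-A})$; and closedness holds because $\sqrt{-A}$ is a closed operator, so $\CD(\sqrt{-A})$ is complete in the norm $(\|u\|^2+\|\sqrt{-A}\,u\|^2)^{1/2}$, which is exactly the $\CE_1$-norm. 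For the inverse map, given a closed symmetric $\CE$, closedness says precisely that $\CD[\CE]$ with inner product $\CE_\alpha(u,v) := \CE(u,v)+\alpha(u,v)$ is a Hilbert space for each $\alpha>0$; the Riesz representation theorem then produces, for each $f\in L^2$, a unique $G_\alpha f\in\CD[\CE]$ with $\CE_\alpha(G_\alpha f,v)=(f,v)$ for all $v\in\CD[\CE]$. One checks $(G_\alpha)_{\alpha>0}$ is a strongly continuous symmetric resolvent with $\|\alpha G_\alpha\|\le 1$ and $\alpha G_\alpha f\to f$ in $L^2$, so Hille--Yosida gives a strongly continuous contraction semigroup $(T_t)$ whose generator $A$ is non-positive self-adjoint, and a short spectral-calculus computation shows $\CD(\sqrt{-A})=\CD[\CE]$ with $\CE(u,v)=(\sqrt{-A}\,u,\sqrt{-A}\,v)$. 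Uniqueness in the Riesz representation and in the spectral calculus shows the two constructions are mutually inverse, giving the bijection.

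For (b), I would route everything through the resolvent: $\CE$ Markovian $\iff$ $\alpha G_\alpha$ Markovian for all $\alpha>0$ $\iff$ $T_t$ Markovian for all $t>0$. The second equivalence is soft, using $G_\alpha = \int_0^\infty e^{-\alpha t}T_t\,dt$ and the Yosida approximation $T_t u=\lim_n(\tfrac{n}{t}G_{n/t})^n u$, together with the fact that the order relation ``$0\le u\le 1 \Rightarrow 0\le Su\le 1$'' is stable under positive linear combinations, compositions, strong limits, and Bochner integrals. The first equivalence is the real content. For $\CE$ Markovian $\Rightarrow$ $\alpha G_\alpha$ Markovian, I would first upgrade the hypothesis to ``the unit contraction operates on $\CE$'': for $u\in\CD[\CE]$ one has $v:=(0\vee u)\wedge 1\in\CD[\CE]$ with $\CE(v,v)\le\CE(u,u)$, obtained by applying $\tau_\eps$, observing $(\tau_\eps(u))_\eps$ is bounded in $(\CD[\CE],\CE_1)$, extracting a weak limit as $\eps\to 0$, identifying it as $v$, and invoking weak lower semicontinuity of $\CE_1$. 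Then for $0\le f\le 1$ and $u=\alpha G_\alpha f$, decompose $u=u^+-u^-$ and $u=(u\wedge 1)+(u-1)^+$, expand $\CE_\alpha(u,u)$ along these decompositions, and substitute $\CE_\alpha(u,w)=\alpha(f,w)$ for $w\in\{u^-,(u-1)^+\}$; using $f\ge 0$ in the first case and $f\le 1$ in the second forces $\CE_\alpha(u^-,u^-)\le 0$ and $\CE_\alpha((u-1)^+,(u-1)^+)\le 0$, hence $u^-=(u-1)^+=0$, i.e.\ $0\le u\le 1$. The reverse implication, $\alpha G_\alpha$ Markovian $\Rightarrow$ $\CE$ Markovian, goes by approximating $u\in\CD[\CE]$ by $u_\alpha:=\alpha G_\alpha(\tau_\eps(u))$, noting $\tau_\eps(u)\in L^2$, that $u_\alpha$ stays in $[-\eps,1+\eps]$, and that $\CE_\alpha(u_\alpha,u_\alpha)=\alpha(\tau_\eps(u),u_\alpha)$ is controlled by $\alpha(u,u)$ up to $O(\eps)$; letting $\alpha\to\infty$ then $\eps\to 0$ with a lower-semicontinuity argument recovers the desired $\eps$-Markovian image in $\CD[\CE]$ with the form inequality.

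The step I expect to be the main obstacle is exactly the passage between the \emph{approximate} Markovian functions $\tau_\eps$ and the genuine unit contraction: one never gets to substitute $(0\vee u)\wedge 1$ into $\CE$ directly, so the argument must run through weak compactness in $(\CD[\CE],\CE_1)$ plus lower semicontinuity of $\CE_1$, checking both that the weak limit of $\tau_\eps(u)$ is the right function and that the form inequality survives the limit. A secondary technical point is verifying that the order property of Markovianity is genuinely stable under every analytic operation relating form, resolvent, and semigroup (Yosida approximation, Laplace transform, resolvent equation); each such stability statement is routine but must be established in an order that avoids circularity.
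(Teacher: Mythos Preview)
The paper does not supply its own proof of this proposition; it is stated as a summary of standard material and attributed to Chapter~1 of \cite{FOT11}. So there is nothing in the paper to compare your argument against, and your sketch is doing considerably more than the paper does. That said, what you have written is essentially the standard route taken in \cite{FOT11}: the form--operator bijection via the resolvent $G_\alpha$ and the spectral square root, and the Markovian equivalence routed through the resolvent using the Laplace transform and Yosida approximation.

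One place where your outline is a bit loose is the reverse implication in (b). The claim that $\CE_\alpha(u_\alpha,u_\alpha)=\alpha(\tau_\eps(u),u_\alpha)$ is ``controlled by $\alpha(u,u)$ up to $O(\eps)$'' is not quite the right bookkeeping, and as written it is unclear how letting $\alpha\to\infty$ produces $\tau_\eps(u)\in\CD[\CE]$ with the form inequality. The standard argument in \cite{FOT11} instead uses the approximating forms $\CE^{(\beta)}(u,v)=\beta(u-\beta G_\beta u,v)$, shows directly that Markovianity of $\beta G_\beta$ forces $\CE^{(\beta)}(\tau_\eps(u),\tau_\eps(u))\le\CE^{(\beta)}(u,u)$, and then passes to the monotone limit $\beta\to\infty$. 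This is cleaner than trying to control $u_\alpha=\alpha G_\alpha(\tau_\eps(u))$ directly, and it avoids the circularity risk you flag at the end. If you want your version to stand on its own you should tighten that step; otherwise, for the purposes of this paper a citation suffices.
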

In the above, $(T_t)_{t \geq 0}$ is Markovian if for all $t \geq 0$, $T_t u \in [0, 1]$ $m$-almost everywhere whenever $u \in [0, 1]$ $m$-almost everywhere.

Now consider a Markov process on $(X, \CB (X))$ with semigroup $(T_t)_{t \geq 0}$ and generator $A$. $A$ is non-positive, and if it is also self-adjoint then this proposition allows us to associate a Dirichlet form $\CE$ to the process. A necessary and sufficient condition for $A$ to be self-adjoint is that the process is $m$-symmetric, i.e. for all $t \geq 0$ and $u, v$ in the domain of $T_t$, \[
	\int_X u(x) (T_t v)(x) \, m(dx) = \int_X (T_t u) (x) v(x) \, m(dx) \;.
\]
Therefore, one can obtain a Dirichlet form from an $m$-symmetric Markov process and vice versa. It turns out that if the Dirichlet form satisfies certain additional conditions, the Markov process does as well. To fully explain this we need a few more definitions. Denote by $C_c (X)$ the space of compactly supported continuous functions on $X$ with the uniform norm.

\begin{definition}
	Let $\CE$ be a Dirichlet form on $L^2 (X, m)$. 
	\begin{enumerate}[(i)]
		\item A \emph{core} $\mathscr{C}$ of $\CE$ is a subset of $\CD [\CE] \cap C_c (X)$ which is dense in $\CD [\CE]$ with respect to the norm $\|u\|^2 = \|u\|_{L^2 (m)}^2 + \CE (u, u)$ and dense in $C_c (X)$ with respect to the uniform norm. $\CE$ is \emph{regular} if it admits a core.
		\item A core $\mathscr{C}$ is \emph{standard} if it is a linear subspace of $C_c (X)$ and for every $\eps > 0$, there is an $\eps$-Markovian function $\tau_\eps$ such that $u \in \mathscr{C}$ implies $\tau_\eps (u) \in \mathscr{C}$. It is \emph{special standard} if it is a subalgebra of $C_c (X)$ and, for every $K \subseteq U \subseteq X$ with $K$ compact and $U$ relatively compact and open, there is a non-negative $u$ in $\mathscr{C}$ such that $u=1$ on $K$ and $u=0$ on $X \setminus U$. 
		\item $\CE$ is \emph{local} if whenever $u, v \in \CD [\CE]$ have disjoint compact supports, $\CE (u, v) = 0$. It is \emph{strongly local} if whenever $u, v \in \CD [\CE]$ have compact supports and $v$ is constant on a neighborhood of the support of $u$, $\CE (u, v) = 0$. 
	\end{enumerate}
\end{definition}

These properties holding for $\CE$ are sufficient for the existence of an $m$-symmetric Markov process with desirable properties, as the next proposition details. We say that the Markov process is killed when it first hits the cemetery state $\delta$.

\begin{proposition}[\cite{FOT11}, Chapters 4 and 7]
	If $\CE$ is a regular Dirichlet form on $L^2 (X, m)$, then there is an $m$-symmetric Hunt process on $(X, \CB (X))$ with associated Dirichlet form $\CE$. Moreover:
	\begin{enumerate}[(i)]
		\item If $\CE$ is local, then this Hunt process is a diffusion. This means that for quasi-every starting point in $X$, the process is almost surely continuous up to the (possibly infinite) time it is killed.
		\item If $\CE$ is strongly local, then this diffusion is not killed inside $X$ for quasi-every starting point in $X$. In other words, if $\zeta$ is the random time at which the process is killed, then almost surely either $\zeta = \infty$ or the left limit of the process at $\zeta$ is not in $X$.
	\end{enumerate}
\end{proposition}

\begin{remark}
	In the above proposition we used the term ``quasi-every" point in $X$. This is a notion of largeness related to the Dirichlet form $\CE$. A precise definition can be found in \cite{FOT11}; here we simply note that quasi-every implies $m$-almost every. One could strengthen Definition \ref{weak-sol} by replacing ``almost every $z \in \CM$" with ``quasi-every $z \in \CM$" and Theorem \ref{main-theorem} would still hold. For simplicity, we will ignore the distinction between these two terms and stick to the ``$m$-almost every" terminology.
	
	Also note that in Definition \ref{weak-sol}, we can now interpret ``almost every $z \in \CM$" to mean ``$\nunqf$-almost every" for NQF and ``$\nuqf$-almost every" for LQF. 
\end{remark}

With this correspondence in mind, one can imagine how the proof of Theorem \ref{main-theorem} will proceed. We will first construct Dirichlet forms on $L^2 (\CM, \nunqf)$ and $L^2 (\CM, \nuqf)$ associated to NQF and LQF, then show that they satisfy the relevant properties so that the corresponding Hunt processes are symmetric diffusions. It will then remain to show that these diffusions are actually weak solutions.

To analyze these processes, we will use a few more facts about the correspondence between Hunt processes and Dirichlet forms. In the next proposition we refer to certain processes as additive functionals. This is a certain class of processes which we will cite results about, but will not work with directly and so do not need a precise definition for. We refer the reader to Appendix A.2 and Chapter 5 of \cite{FOT11} for background on these processes.

\begin{proposition}[\cite{FOT11}, Chapter 5]\label{revuz}
	Let $(\Omega, \CF, (z_t)_{t \in [0, \infty]}, (P_z)_{z \in X})$ be a Hunt process associated to a regular Dirichlet form $\CE$, and suppose $u \in D[\CE]$. \begin{enumerate}[(i)]
	\item The process \[
		A^{[u]}_t = u(z_t) - u(z_0)
	\]
	is a continuous additive functional which decomposes uniquely as \[
		A^{[u]} = S^{[u]} + N^{[u]}
	\]
	where $S^{[u]}$ is a finite-energy martingale additive functional and $N^{[u]}$ is a zero-energy continuous additive functional. Here the energy of an additive functional $(A_t)_{t \geq 0}$ is given by \[
		e(A) = \lim_{t \to 0} \frac{1}{2t} \E_m (A_t^2) \;.
	\]
	\item To each positive continuous additive functional $A$, there is an associated measure $\mu_A$, called its \emph{Revuz measure}. If $A^1$ and $A^2$ have the same Revuz measure, then they are equivalent in the sense that for each $t > 0$, they are almost surely equal for $m$-almost every initial condition.
	\item The Revuz measure of the quadratic variation of $S^{[u]}$ satisfies \[
		\mu_{\langle S^{[u]} \rangle} (v) = 2 \CE (uv, u) - \CE (u^2, v) \;.
	\]
\end{enumerate}
\end{proposition}

The Revuz correspondence will allow us to find explicit formulas for one-dimensional projections of our Hunt processes. Using the SDEs \eqref{nqf-sde} and \eqref{qf-sde}, we can find similar formulas that weak solutions to NQF and LQF must obey. Showing that these are the same will prove Theorem \ref{main-theorem}.

This general proof strategy is essentially the same one used by \cite{DS22} for the two-dimensional case, and several of the steps mentioned above will carry over from their setting to the present one with minimal modification. We will indicate when this is the case.

\subsection{Construction of the Dirichlet form}

Recall that $\CM$ is a locally compact separable metric space, and that $\nunqf$ and $\nuqf$ are $\sigma$-finite Radon measures on $\CM$ by Lemmas \ref{nqf-int} and \ref{qf-int}. As explained above, we will ultimately want to define our Dirichlet forms on the spaces $L^2 (\CM, \nunqf)$ and $L^2 (\CM, \nuqf)$. It suffices to first construct forms on $L^2 (H^{0-}_\mathrm{ref}, \nunqf)$ and $L^2 (H^{0-}_\mathrm{ref}, \nuqf)$, then push them forward under the appropriate CGMC map.

First, we will obtain an expression for the gradient of a functional $G \in \CC$. Since $M^\gamma_\mathrm{ref} (\psi)$ is a Radon measure, the space of continuous $h \in E$ is dense in $L^2 (M^\gamma_\mathrm{ref} (\psi))$ (this follows by Urysohn's lemma, as in Lemma 2.3 of \cite{DS22}). Therefore, Lemma \ref{frechet} and the Riesz representation theorem guarantee the existence of an $L^2 (M^\gamma_\mathrm{ref} (\psi))$-gradient of $G$ with formula \[
	DG(\psi) = \gamma \sum_{i=0}^k \partial_i q (M^\gamma_\mathrm{ref} (\psi) (h_0), \dots, M^\gamma_\mathrm{ref} (\psi) (h_k)) h_i
\]
where $G$ has the same structure as in Definition \ref{test-functional}.

\begin{definition}\label{dirichlet-form}
	Let $\CE_\mathrm{NQF}$ and $\CE_\mathrm{LQF}$ be bilinear forms on $\CC$ which for $F, G \in \CC$ are given by \[
	\CE_\mathrm{NQF} (F, G) = \frac{n^2 \sigma^2}{2\gamma^2} \int_{H^{0-}_\mathrm{ref}} \langle DF (\psi), DG (\psi) \rangle_{L^2 (M^\gamma_\mathrm{ref} (\psi))} \, \nu_\mathrm{NQF} (d\psi)
	\]
and	
	\[
		\CE_\mathrm{LQF} (F, G) = \frac{n^2 \sigma^2}{2\gamma^2} \int_{H^{0-}_\mathrm{ref}} \langle DF (\psi), DG (\psi) \rangle_{L^2 (M^\gamma_\mathrm{ref} (\psi))} \, \nu_\mathrm{LQF} (d\psi) \;.
	\]
\end{definition}

From the definition it is clear that these forms are symmetric and positive semidefinite. To prove any further properties, we will need a less symmetric expression for these forms. For this we will use our integration-by-parts formulas. 

First consider the NQF measure $\nunqf$. Let \[
	F(\psi) = p(M^\gamma_\mathrm{ref} (\psi) (f_0), \dots, M^\gamma_\mathrm{ref} (\psi) (f_m))
	\]
	and 
	\[
	G(\psi) = q(M^\gamma_\mathrm{ref} (\psi) (g_0), \dots, M^\gamma_\mathrm{ref} (\psi) (g_l))
	\]
	be functionals in $\CC$. For the rest of this section, the functions $p$ and $q$ will sometimes be written without arguments; in these cases they will always be assumed to have the same arguments as above. From Theorem \ref{nqf-ibp} we compute \begin{align*}
		&\sum_{i=0}^l \int_{H^{0-}_\mathrm{ref}} p \partial_i q \langle g_i, \psi \rangle_E \, \nunqf (d\psi) \\
		&=\sum_{i=0}^l \int_{H^{0-}_\mathrm{ref}} D_{g_i} (p \partial_i q ) + \frac{2 \gamma Q_{\mathrm{ref}} (1)}{n \sigma^2} p \partial_i q \frac{M^\gamma_\mathrm{ref} (\psi) (fg_i)}{M^\gamma_\mathrm{ref} (\psi) (f)} - \frac{2 \gamma Q_\mathrm{ref} \omega_\mathrm{ref} (g_i)}{n \sigma^2} (p \partial_i q) \, \nunqf (d\psi) \\
		&= \sum_{i=0}^l \int_{H^{0-}_\mathrm{ref}} \gamma \partial_i q \left( \sum_{j=0}^m (\partial_j p) M^\gamma_\mathrm{ref} (\psi) (f_j g_i) \right) + \gamma p \left( \sum_{j=0}^l (\partial_i \partial_j q) M^\gamma_\mathrm{ref} (\psi) (g_j g_i) \right)\\
		&\qquad + \frac{2 \gamma Q_{\mathrm{ref}} (1)}{n \sigma^2} p \partial_i q \frac{M^\gamma_\mathrm{ref} (\psi) (fg_i)}{M^\gamma_\mathrm{ref} (\psi) (f)}- \frac{2 \gamma Q_\mathrm{ref} \omega_\mathrm{ref} (g_i)}{n \sigma^2} (p \partial_i q) \, \nunqf (d\psi) \;.	
\end{align*}

The term involving the first sum over $j$ is 
\begin{align*}
	&\sum_{i=0}^l \int_{H^{0-}_\mathrm{ref}} \gamma \partial_i q \left( \sum_{j=0}^m (\partial_j p) M^\gamma_\mathrm{ref} (\psi) (f_j g_i) \right) \, \nunqf (d\psi)\\
	&= \frac{1}{\gamma} \int_{H^{0-}_\mathrm{ref}} \langle DF (\psi), DG (\psi) \rangle_{L^2(M^\gamma_\mathrm{ref} (\psi))} \, \nunqf (d\psi) \\
	&= \frac{2\gamma}{n^2 \sigma^2} \CE_\mathrm{NQF} (F, G) \;.
\end{align*}
Aside from this one, the rest of the terms are integrals against $p$ with respect to $\nunqf$. Therefore, we can define an operator $\CL_\mathrm{NQF}$ to cancel all of these terms. Rearranging our earlier computation to solve for $\CE_\mathrm{NQF} (F, G)$ and using the definitions of $\gamma$ and $a_n$ gives the following lemma. 
\begin{lemma}
	Define an operator $\CL_\mathrm{NQF}$ on $\CC$ by \begin{align*}
		\CL_\mathrm{NQF} G(\psi) = &\sum_{i=0}^l (\partial_i q) \left( \frac{n Q_\mathrm{ref} (1) M^\gamma_\mathrm{ref} (\psi) (fg_i)}{M^\gamma_\mathrm{ref} (\psi) (f)}\right) + \frac{n^2 \sigma^2}{2} \sum_{i,j=0}^l (\partial_i \partial_j q) M^\gamma_\mathrm{ref} (\psi) (g_j g_i) \\
		&- n Q_\mathrm{ref} \sum_{i=0}^l (\partial_i q) \omega_\mathrm{ref} (g_i) - \frac{n^2 \sigma^2}{2 \gamma} \sum_{i=0}^l \partial_i q\langle g_i, \psi \rangle_E
	\end{align*}
	with $G$ as above. Then \[
		\CE_\mathrm{NQF} (F, G) = \int_{H^{0-}_\mathrm{ref}} F(\psi) (-\CL_\mathrm{NQF} G) (\psi) \, \nunqf (d\psi) 
	\]
	for all $F, G \in \CC$. 
\end{lemma}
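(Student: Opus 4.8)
The plan is to take the symmetric expression for $\CE_\mathrm{NQF}(F,G)$, rewrite it via the explicit gradient formula, and then reconstruct that expression from the left-hand side of the NQF integration-by-parts identity (Theorem~\ref{nqf-ibp}) applied to suitable functionals. With $F$ and $G$ as in the statement, the $L^2(M^\gamma_\mathrm{ref}(\psi))$-gradient formula (from Lemma~\ref{frechet} and Riesz representation) gives $\langle DF(\psi),DG(\psi)\rangle_{L^2(M^\gamma_\mathrm{ref}(\psi))}=\gamma^2\sum_{i,j}(\partial_jp)(\partial_iq)\,M^\gamma_\mathrm{ref}(\psi)(f_jg_i)$, so
\[
	\CE_\mathrm{NQF}(F,G)=\frac{n^2\sigma^2}{2}\int_{H^{0-}_\mathrm{ref}}\sum_{i,j}(\partial_jp)(\partial_iq)\,M^\gamma_\mathrm{ref}(\psi)(f_jg_i)\,\nunqf(d\psi).
\]
The point is that this double sum is precisely the ``product rule'' term produced when Theorem~\ref{nqf-ibp} is applied to the functional $p\,\partial_iq$ with test direction $h=g_i$ and summed over $i$; every other term produced by the integration by parts is an integral of $p$ against something depending only on $G$ and $\psi$, and those together will be read off as $\int F\,(-\CL_\mathrm{NQF}G)\,\nunqf(d\psi)$.

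In order, the steps are: (1) verify that for each $i$ the functional $\psi\mapsto p\,\partial_iq$ lies in $\CC$ — the compact-support condition is inherited from $p$, and for the regularity requirement one first reduces to the case where the outer functions $p,q$ are smooth (both sides of the claimed identity vary continuously under a mollification of $p,q$, with the domination supplied by Lemma~\ref{nqf-int}), after which the general case follows; (2) apply Theorem~\ref{nqf-ibp} to $p\,\partial_iq$ with $h=g_i$ and expand $D_{g_i}(p\,\partial_iq)$ using the Leibniz rule together with Lemma~\ref{frechet}, which yields $\gamma(\partial_iq)\sum_j(\partial_jp)M^\gamma_\mathrm{ref}(\psi)(f_jg_i)+\gamma p\sum_j(\partial_i\partial_jq)M^\gamma_\mathrm{ref}(\psi)(g_jg_i)$; (3) sum over $i$ and recognize the term $\gamma\int\sum_{i,j}(\partial_iq)(\partial_jp)M^\gamma_\mathrm{ref}(\psi)(f_jg_i)\,\nunqf(d\psi)=\tfrac1\gamma\int\langle DF,DG\rangle_{L^2(M^\gamma_\mathrm{ref}(\psi))}\,\nunqf(d\psi)=\tfrac{2\gamma}{n^2\sigma^2}\CE_\mathrm{NQF}(F,G)$; (4) solve for $\CE_\mathrm{NQF}(F,G)$, multiply through by $\tfrac{n^2\sigma^2}{2\gamma}$, collect the remaining terms (each of the form $\int p\cdot(\cdots)\,\nunqf(d\psi)$), and simplify the constants using the definitions of $\gamma$ and $a_n$; reading off the integrand paired against $p$ then identifies $-\CL_\mathrm{NQF}G$ with the displayed expression.

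The rearrangement is legitimate provided every integral converges, and this is exactly what Lemma~\ref{nqf-int} delivers on the event $\{M^\gamma_\mathrm{ref}(\psi)(1)\in(\eps,\eps^{-1})\}$ enforced by the compact support of $q$: each of $M^\gamma_\mathrm{ref}(\psi)(f_jg_i)$, $M^\gamma_\mathrm{ref}(\psi)(g_jg_i)$, $M^\gamma_\mathrm{ref}(\psi)(fg_i)$ is bounded by a constant times $M^\gamma_\mathrm{ref}(\psi)(1)$, $M^\gamma_\mathrm{ref}(\psi)(f)$ is bounded below by a constant times $M^\gamma_\mathrm{ref}(\psi)(1)$ since $f>0$, $\omega_\mathrm{ref}(g_i)$ is a constant, and $\langle g_i,\psi\rangle_E$ is a centered Gaussian under $\mu_\mathrm{ref}$ hence, restricted to that event, lies in every $L^p(\nunqf)$ by Lemma~\ref{nqf-int}. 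I expect the only point needing genuine care to be step~(1), namely the bookkeeping around membership of $p\,\partial_iq$ in $\CC$ and the reduction to smooth outer functions; the remainder is the routine rearrangement already displayed immediately before the lemma statement. The corresponding statement for LQF is proved in exactly the same way, with Theorem~\ref{qf-ibp} and Lemma~\ref{qf-int} replacing their NQF counterparts.
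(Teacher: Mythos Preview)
Your proposal is correct and follows essentially the same approach as the paper: apply Theorem~\ref{nqf-ibp} to the functional $p\,\partial_i q$ with direction $h=g_i$, sum over $i$, identify the term $\tfrac{2\gamma}{n^2\sigma^2}\CE_\mathrm{NQF}(F,G)$, and read off $-\CL_\mathrm{NQF}G$ from the remaining terms paired against $p$. Your treatment is in fact slightly more careful than the paper's, which applies Theorem~\ref{nqf-ibp} to $p\,\partial_i q$ without addressing that its outer function is a priori only $C^1$; your mollification step~(1) patches this cleanly.
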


Let $\CC^\CM$ be the set of functionals on $\CM$ of the form $F(\omega) = q(\omega(f_0), \dots, \omega(f_k))$ where $q \in C^2 (\R^{k+1})$, $f_i \in C^\infty (M)$, $f_0$ is the constant function $1$, and $q$ has compact support contained in $(\eps, \eps^{-1}) \times Q$ for some $\eps > 0$ and compact $Q \subset \R^k$ (recall Definition \ref{test-functional}). For any such $F$, the functional $F \circ M^\gamma_\mathrm{ref}$ belongs to $\CC$. We can define $\CE^{\CM}_\mathrm{NQF}$ on $\CC^\CM$ via \[
	\CE^\CM_\mathrm{NQF} (F, G) \coloneq \CE_\mathrm{NQF} (F \circ M^\gamma_\mathrm{ref}, G \circ M^\gamma_\mathrm{ref}) \;.
\]
This form is also symmetric and non-negative. 

We repeat a similar computation for LQF. By Theorem \ref{qf-ibp}, \begin{align*}
		&\sum_{i=0}^l \int_{H^{0-}_\mathrm{ref}} p \partial_i q \langle g_i, \psi \rangle_E \, \nuqf (d\psi) \\
		&=\sum_{i=0}^l \int_{H^{0-}_\mathrm{ref}} D_{g_i} (p \partial_i q ) + \frac{2 \gamma}{n \sigma^2} (p \partial_i q) M^\gamma_\mathrm{ref} (\psi) (fg_i) \\
		&\qquad - \rho Q_\mathrm{ref} \sqrt{2 a_n / \sigma^2} (p \partial_i q) \omega_\mathrm{ref} (g_i) \, \nuqf (d\psi) \\
		&= \sum_{i=0}^l \int_{H^{0-}_\mathrm{ref}} \gamma \partial_i q \sum_{j=0}^m \partial_j p M^\gamma_\mathrm{ref} (\psi) (f_j g_i) + \gamma p \sum_{j=0}^l (\partial_i \partial_j q) M^\gamma_\mathrm{ref} (\psi) (g_j g_i)\\
		&\qquad + \frac{2 \gamma}{n \sigma^2} (p \partial_i q) M^\gamma_\mathrm{ref} (\psi) (fg_i) - \rho Q_\mathrm{ref} \sqrt{2 a_n / \sigma^2} (p \partial_i q) \omega_\mathrm{ref} (g_i) \nuqf (d\psi) \;.	
\end{align*}
Just like NQF, the term involving the first sum over $j$ is $\tfrac{2\gamma}{n^2 \sigma^2}\CE_\mathrm{LQF} (F, G)$. Thus, we can solve for $\CE_\mathrm{LQF} (F, G)$ and define $\CL_\mathrm{LQF}$ so that it encompasses all the other terms:

\begin{lemma}
	Define $\CL_\mathrm{LQF}$ on $\CC$ by \begin{align*}
		\CL_\mathrm{LQF} G(\psi) &= \sum_{i=0}^l n (\partial_i q) M^\gamma_\mathrm{ref} (\psi) (fg_i) + \frac{n^2 \sigma^2}{2} \sum_{i,j=0}^l (\partial_i \partial_j q) M^\gamma_\mathrm{ref} (\psi) (g_j g_i) \\
		&- \frac{n^2 \sigma^2}{2\gamma} \sum_{i=0}^l \partial_i q \langle g_i, \psi \rangle_E  - n \rho Q_\mathrm{ref} \sum_{i=0}^l (\partial_i q) \omega_\mathrm{ref} (g_i)\;.
	\end{align*}
	Then \[
		\CE_\mathrm{LQF} (F, G) = \int_{H^{0-}_\mathrm{ref}} F(\psi) (-\CL_\mathrm{LQF} G (\psi)) \, \nuqf (d\psi) 
	\]
	for all $F, G \in \CC$. 
\end{lemma}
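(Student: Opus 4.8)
The plan is to carry out the computation sketched in the paragraph preceding the statement, tracking constants carefully and then repackaging it; nothing conceptual is needed beyond Theorem~\ref{qf-ibp}. Recall that for $G(\psi) = q(M^\gamma_\mathrm{ref} (\psi) (g_0),\dots,M^\gamma_\mathrm{ref} (\psi) (g_l)) \in \CC$ the $L^2(M^\gamma_\mathrm{ref} (\psi))$-gradient is $DG(\psi) = \gamma\sum_i (\partial_i q)\,g_i$, and likewise $DF(\psi) = \gamma\sum_j (\partial_j p)\,f_j$, so that
\[
  \langle DF(\psi),DG(\psi)\rangle_{L^2(M^\gamma_\mathrm{ref} (\psi))} = \gamma^2\sum_{i,j}(\partial_j p)(\partial_i q)\,M^\gamma_\mathrm{ref} (\psi) (f_j g_i).
\]
By the definition of $\CE_\mathrm{LQF}$ this gives $\CE_\mathrm{LQF}(F,G) = \tfrac{n^2\sigma^2}{2}\int_{H^{0-}_\mathrm{ref}}\sum_{i,j}(\partial_j p)(\partial_i q)\,M^\gamma_\mathrm{ref} (\psi) (f_j g_i)\,\nuqf(d\psi)$, which is $\tfrac{n^2\sigma^2}{2\gamma}$ times exactly the ``first sum over $j$'' produced when Theorem~\ref{qf-ibp} is applied to the functional $p\,\partial_i q$ in direction $g_i$ and summed over $i=0,\dots,l$; the derivative $D_{g_i}(p\,\partial_i q)$ splits by the Leibniz rule into a piece hitting $p$, which is this sum, and a piece hitting $\partial_i q$, which becomes the second-order term of $\CL_\mathrm{LQF}$.

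So I would apply Theorem~\ref{qf-ibp} as just described and solve the resulting identity for $\CE_\mathrm{LQF}(F,G)$. Every surviving term on the right is of the form $\int_{H^{0-}_\mathrm{ref}} p\cdot(\text{functional of }\psi)\,\nuqf(d\psi)$: the second-order term $\gamma\, p\sum_{i,j}(\partial_i\partial_j q)M^\gamma_\mathrm{ref} (\psi) (g_j g_i)$, the exponential-insertion term $\tfrac{2\gamma}{n\sigma^2}\, p\sum_i (\partial_i q) M^\gamma_\mathrm{ref} (\psi) (fg_i)$, the term $\rho Q_\mathrm{ref}\sqrt{2a_n/\sigma^2}\, p\sum_i (\partial_i q)\omega_\mathrm{ref}(g_i)$, and — coming from the left side of Theorem~\ref{qf-ibp} — the Gaussian term $p\sum_i(\partial_i q)\langle g_i,\psi\rangle_E$. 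Multiplying through by the prefactor $\tfrac{n^2\sigma^2}{2\gamma}$ and simplifying with $\gamma = n\sqrt{a_n\sigma^2/2}$, in particular the identity $\tfrac{n^2\sigma^2}{2\gamma}\sqrt{2a_n/\sigma^2} = n$ (which turns the $\sqrt{2a_n/\sigma^2}$ into the coefficient $n$ appearing in $\CL_\mathrm{LQF}$), collects everything into $\int_{H^{0-}_\mathrm{ref}}F(\psi)(-\CL_\mathrm{LQF}G)(\psi)\,\nuqf(d\psi)$ with $\CL_\mathrm{LQF}$ exactly as stated. Integrability is immediate: $F$ is bounded, each term of $\CL_\mathrm{LQF}G$ except the last is pointwise dominated by a constant times $\one_{\{M^\gamma_\mathrm{ref} (\psi) (1)\in(\eps,\eps^{-1})\}}$, and the last is a Gaussian times that same indicator, hence in every $L^p(\nuqf)$ by Lemma~\ref{qf-int}.

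The one step needing genuine care — and the closest thing to an obstacle — is that Theorem~\ref{qf-ibp} as stated permits only directions $h\in\mathring E$, whereas above we take $h=g_i$, which for $i\ge1$ need not be mean-zero and for $i=0$ equals the constant $1$. I would handle this by first recording that Theorem~\ref{qf-ibp} in fact holds verbatim for every continuous $h\in E$: for $h=1$ its left side $\int G\langle 1,\psi\rangle_E\,\nuqf(d\psi)$ vanishes, and its right side is precisely the assertion that $\int D_1 G\,\nuqf(d\psi)$ equals the derivative of $\int G\,\nuqf$ under the constant shift $\psi\mapsto\psi+c$, which one computes directly from the density in Equation~\ref{qf-measure-calc} — the $c$-linear factor producing the $\omega_\mathrm{ref}(1)$ term and the $e^{\gamma c}$ factor producing the $M^\gamma_\mathrm{ref} (\psi) (f)$ term. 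Writing $g_i = \mathring g_i + \overline{g_i}$ and combining the $\mathring g_i$ case with this constant case then yields exactly the identity used in the computation. A second, purely cosmetic, technicality is that $p\,\partial_i q$ is only $C^1$ when $q\in C^2$; this is dealt with either by requiring $C^3$-regularity in the definition of $\CC$ (which changes nothing else in the paper) or by approximating $q$ in $C^2_{\mathrm{loc}}$ and passing to the limit using the bounds of Lemmas~\ref{frechet} and~\ref{qf-int}.
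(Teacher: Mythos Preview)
Your argument is correct and follows exactly the paper's approach: apply Theorem~\ref{qf-ibp} to $p\,\partial_i q$ in direction $g_i$, sum over $i$, isolate the $\CE_\mathrm{LQF}$ term, and rescale by $n^2\sigma^2/(2\gamma)$. You are in fact more careful than the paper, which silently uses Theorem~\ref{qf-ibp} for $h=g_i\in E$ even though its statement reads $h\in\mathring E$; your extension to constants via the explicit density (or, equivalently, via Lemma~\ref{cgf-ibp}, which already covers $h=1$) is the right patch.
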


Again we can obtain from this a form $\CE^\CM_\mathrm{LQF}$ on $\CC^\CM$ by letting $\CE^\CM_\mathrm{LQF} (F, G) = \CE_\mathrm{LQF} (F \circ M^\gamma_\mathrm{ref}, G \circ M^\gamma_\mathrm{ref})$. Using the $\CL$ operators it will be much easier to prove that $\CE^\CM_\mathrm{NQF}$ and $\CE^\CM_\mathrm{LQF}$ are closable. Based on the results at the beginning of this section, we will want to show that these are Dirichlet forms, that they are regular with a special standard core, and that they are strongly local. 

\begin{lemma}
	The forms $\CE^\CM_\mathrm{NQF}$ and $\CE^\CM_\mathrm{LQF}$ are closable. We denote their minimal closed extensions also by $\CE^\CM_\mathrm{NQF}$ and $\CE^\CM_\mathrm{LQF}$. 
\end{lemma}
\begin{proof}
Let $s \in \{\mathrm{NQF}, \mathrm{LQF}\}$. To show that $\CE^\CM_s$ is closable, it suffices to check that whenever a sequence $(F_n)_{n \geq 1}$ in $\CC^\CM$ converges to $0$ in $L^2 (\CM, \nu_s)$, then $\CE^\CM_s (F_n, G) \to 0$ for all $G \in \CC^\CM$ (see Exercise 1.1.2 of \cite{FOT11}). Choose a sequence $(F_n)_{n \geq 1}$ in $\CC^\CM$ which converges to zero in $L^2 (\CM, \nu_s)$, as well as an arbitrary $G \in \CC^\CM$. We abuse notation and also write $F_n$ and $G$ for $F_n \circ M^\gamma_\mathrm{ref}$ and $G \circ M^\gamma_\mathrm{ref}$, the corresponding elements of $\CC$. Then we have \begin{align*}
	\lvert \CE^\CM_s (F_n, G) \rvert &= \left\lvert \int_{\CM} F_n (\omega)(-\CL_s G) (\omega) \, \nu_s (d\omega) \right\rvert \\
	&\leq \| F_n \|_{L^2 (\CM, \nu_s)} \|\CL_s G \|_{L^2 (\CM, \nu_s)} \;.
\end{align*}
Thus, it suffices to show that $\CL_s G$ is in $L^2 (\CM, \nu_s)$. Inspecting the definitions of $\CL_\mathrm{NQF}$ and $\CL_\mathrm{LQF}$, we note that $q$ and all of its derivatives are bounded above up to constants by $\one_{M^\gamma_\mathrm{ref} (\psi) \in (\eps, \eps^{-1})}$. It is then immediate from Lemmas \ref{nqf-int} and \ref{qf-int} that each term of $\CL_\mathrm{NQF} G$ and $\CL_\mathrm{LQF} G$ is in $L^2 (\nunqf)$ or $L^2 (\nuqf)$ as needed.
\end{proof}

Propositions 3.9 and 3.10 of \cite{DS22} establish that $\CC^\CM$ is a special standard core. Their argument is in the case where $M$ is the two-dimensional torus, but it extends with no loss to our current setting. Moreover, they show that the $\eps$-Markovian functions $\tau_\eps$ required for a standard core can be taken to be smooth.

To show Markovianity, locality, and strong locality of the minimal closed extensions, it suffices to prove the analogous properties for the original forms; see Section 3.1 of \cite{FOT11}. From this, Markovianity of the forms follow rather quickly as in \cite{DS22}. Indeed, for $F \in \CC^\CM$ of the form $F(\omega) = p(\omega (f_0), \dots, \omega (f_k))$, we have \[
	D(\tau_\eps \circ F) (\omega) = \gamma \sum_{i=0}^k (\tau_\eps' \circ p) (\omega(f_0), \dots, \omega(f_k)) \partial_i p (\omega(f_0), \dots, \omega(f_k)) f_i \;.
\]
Therefore, $	\CE^\CM_s (\tau_\eps \circ F, \tau_\eps \circ F) \leq \CE^\CM_s (F, F)$ for each $s \in \{\mathrm{NQF}, \mathrm{LQF}\}$ by the definition of $\CE^\CM_s$ and the fact that $\tau_\eps' (t) \in [0, 1]$ for all $t \in \R$. 

The proof of strong locality in Proposition 3.10 of \cite{DS22} also applies directly to our setting, as it only uses the inner product structure of $\CE$ in Definition \ref{dirichlet-form} together with a derivative formula of the type proved in Lemma \ref{frechet}. Thus, both forms are strong local. By the Dirichlet form correspondence, we can conclude the following: 
\begin{proposition}
	There is an $\nunqf$ (resp. $\nuqf$)-symmetric Hunt process on $\CM$ associated to the form $\CE^\CM_\mathrm{NQF}$ (resp. $\CE^\CM_\mathrm{LQF}$). For $\nunqf$ (resp. $\nuqf$)-almost every starting point, the process has continuous paths and is not killed inside $\CM$. 
\end{proposition}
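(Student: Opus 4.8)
The plan is to feed the properties of $\CE^\CM_\mathrm{NQF}$ and $\CE^\CM_\mathrm{LQF}$ accumulated above into the Dirichlet-form correspondence recalled at the start of this section. Fix $s \in \{\mathrm{NQF}, \mathrm{LQF}\}$ and write $\nu_s$ for the associated measure on $\CM$. Before invoking the machinery I would confirm the standing hypotheses: $\CM$ with the topology of weak convergence is a locally compact separable metric space (on a compact $M$ the total mass $\omega \mapsto \omega(1)$ is continuous, and families of measures of bounded mass are relatively compact), and $\nu_s$ is a positive Radon measure of full support --- $\sigma$-finiteness is Lemma~\ref{nqf-int} (resp.\ Lemma~\ref{qf-int}), and full support follows since $M^\gamma_\mathrm{ref}(\psi)$ almost surely charges every nonempty open subset of $M$. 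I would also record that $\CC^\CM$ is dense in $L^2(\CM, \nu_s)$: as $\nu_s$-a.e.\ $\omega$ satisfies $\omega(1) \in (0, \infty)$, this is a monotone-class argument using that the maps $\omega \mapsto \omega(f)$, $f \in C^\infty(M)$, separate points of $\CM$.

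With these in hand the argument is short. The lemma above establishing closedness shows that $\CE^\CM_s$, initially defined on $\CC^\CM$, extends to a closed symmetric form on $L^2(\CM, \nu_s)$, still written $\CE^\CM_s$; the Markovian property checked on $\CC^\CM$ via the smooth $\eps$-Markovian functions $\tau_\eps$ is preserved under this closure, so $\CE^\CM_s$ is a Dirichlet form. It is regular because the special standard core on $\CC^\CM$ obtained by adapting Proposition~3.9 of \cite{DS22} is dense in $\CD[\CE^\CM_s]$ for the form norm and in $C_c(\CM)$ for the uniform norm. The proposition of \cite{FOT11} (Chapters~4 and 7) quoted above then yields a $\nu_s$-symmetric Hunt process on $(\CM, \CB(\CM))$ with associated Dirichlet form $\CE^\CM_s$. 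Finally, strong locality of $\CE^\CM_s$ (imported from Proposition~3.10 of \cite{DS22}) entails locality, so part~(i) of that proposition gives continuous sample paths and part~(ii) gives that the process is not killed inside $\CM$, in each case for quasi-every starting point; since quasi-every implies $\nu_s$-almost every, this is exactly the assertion.

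The only steps carrying genuine content are the auxiliary verifications in the first paragraph --- the $L^2(\CM, \nu_s)$-density of $\CC^\CM$ and the Radon-ness and full support of $\nu_s$ --- together with the (routine) fact that $\CM$ is locally compact; once these are settled, the rest is a direct appeal to results already established here and in \cite{DS22} and \cite{FOT11}, so I do not anticipate a serious obstacle.
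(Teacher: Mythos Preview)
Your proposal is correct and matches the paper's approach exactly: assemble closedness, Markovianity, regularity with a special standard core, and strong locality, then invoke the \cite{FOT11} correspondence. The paper does not spell out the auxiliary hypotheses you list in your first paragraph, so your version is if anything more careful. One small slip: your justification of full support is not quite right --- the fact that $M^\gamma_\mathrm{ref}(\psi)$ charges every open subset of $M$ says nothing about $\nu_s$ having full support on $\CM$, and indeed $\nu_s$ is concentrated on the (thin) set of CGMC measures; the standard fix, implicit in \cite{DS22}, is to work on the support of $\nu_s$ rather than all of $\CM$.
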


\subsection{Analysis of the Hunt processes}\label{fukushima}

The remaining task is to show that the Hunt processes we have constructed are indeed weak solutions to the NQF and LQF equations. Let us first focus on the SDE \eqref{nqf-sde} associated to weak solutions for NQF: \begin{align*}
	\partial_t \omega_t (h) = &-n \left( Q_\mathrm{ref} \omega_\mathrm{ref} (h) + \omega_\mathrm{ref} (h P_\mathrm{ref} \phi_t) - \frac{Q_t (1)}{\omega_t (f)} \omega_t (fh) \right) + n \sigma \omega_t (\xi_t h) \;.
\end{align*}
This can be used to obtain an equation for a functional in $\CC^\CM$ of the form \[
	F(\omega) = p(\omega (f_0), \dots, \omega (f_m)) \;.
\]
Indeed, deriving $F_t \coloneq F(\omega_t)$ and using Itô's formula gives \begin{align*}
	\partial_t F_t = &\Bigg( n \sum_{i=0}^m (\partial_i p) \left( \frac{Q_t (1)}{\omega_t (f)} \omega_t (ff_i) - Q_\mathrm{ref} \omega_\mathrm{ref} (f_i) - \omega_\mathrm{ref} (f_i P_\mathrm{ref} \phi_t) \right) \\
	&+ \frac{n^2 \sigma^2}{2} \sum_{i,j=0}^m (\partial_i \partial_j p) \omega_t (f_i f_j) \Bigg)
	+ n \sigma 	\sum_{i=0}^m (\partial_i p) \omega_t (\xi_t f_i)
\end{align*}
where the second derivative term arises from the white noise isometry. On the other hand, recall that \begin{align*}
	\CL_\mathrm{NQF} F (\omega_t) = &n \sum_{i=0}^m (\partial_i p) \Bigg( \frac{Q_t (1) \omega_t (f f_i)}{\omega_t (f)}  - Q_\mathrm{ref} \omega_\mathrm{ref} (f_i) - \frac{n \sigma^2}{2\gamma} \langle f_i, \psi_t \rangle_E \Bigg) \\
	&+ \frac{n^2 \sigma^2}{2} \sum_{i,j=0}^m (\partial_i \partial_j p) \omega_t (f_i f_j) \;.
\end{align*}
From the definition of $\gamma$ and the inner product on $E$, \[
\omega_\mathrm{ref} (f_i P_\mathrm{ref} \phi_t ) = \frac{n \sigma^2}{2 \gamma} \langle f_i, \psi_t \rangle_E
\]
so we can write \[
	\partial_t F_t = \CL_\mathrm{NQF} F(\omega_t)  + n \sigma \sum_{i=0}^m (\partial_i p) \omega_t (\xi_t f_i) \;.
\]
Consequently, the process \[
	S^{F}_{\mathrm{NQF}} (t) \coloneq F (\omega_t) - F(\omega_0) - \int_0^t \CL_\mathrm{NQF} F(\omega_r) \, dr
\]
is a martingale with quadratic variation \[
	\langle S^{F}_{\mathrm{NQF}} \rangle_t = n^2 \sigma^2 \sum_{i,j=0}^m \int_0^t (\partial_i p) (\partial_j p) \omega_r (f_i f_j) \, dr \;.
\]
The last two equations characterize the processes $F_t$ via their semimartingale decompositions, and consequently characterize projections $\omega_t (h)$ of weak solutions by a standard localization argument. Therefore, we will be done if we show that one-dimensional projections of the Hunt process constructed in the previous subsection have the same semimartingale decomposition. 

Before showing these properties, let us see the analogous decomposition we will need to prove for the Hunt process associated to LQF. The equation for $F$ in that case is \begin{align*}
	\partial_t F_t = &\Bigg( n \sum_{i=0}^m (\partial_i p) \left( \omega_t (f f_i) - \rho Q_\mathrm{ref} \omega_\mathrm{ref} (f_i) - \omega_\mathrm{ref} (\phi_t P_\mathrm{ref} f_i) \right)  \\
	&+ \frac{n^2 \sigma^2}{2} \sum_{i,j=0}^m (\partial_i \partial_j p) \omega_t (f_j f_i) \Bigg)
	+ n \sigma \sum_{i=0}^m (\partial_i p) \omega_t (\xi_t f_i)
\end{align*}
and \begin{align*}
	\CL_\mathrm{LQF} F (\omega_t) = &n\sum_{i=0}^m (\partial_i p) \left( \omega_t (f f_i) - \rho Q_\mathrm{ref} \omega_\mathrm{ref} (f_i)  - \frac{n \sigma^2}{2 \gamma} \langle f_i, \psi \rangle_E  \right) \\
	&+ \frac{n^2 \sigma^2}{2} \sum_{i,j=0}^m (\partial_i \partial_j p) \omega_t (f_i f_j) \;.
\end{align*}
Therefore, the process \[
	S^{F}_\mathrm{LQF} (t) = F(\omega_t) - F(\omega_0) - \int_0^t \CL_\mathrm{LQF} F(\omega_r) \, dr
\]
is a martingale with the same quadratic variation, \[
	\langle S^{F}_\mathrm{LQF} \rangle_t = n^2 \sigma^2 \sum_{i,j=0}^m \int_0^t (\partial_i p) (\partial_j p)\omega_r (f_i f_j) \, dr \;.
\]

The remainder of the argument is identical for NQF and LQF. Let $Y^\mathrm{NQF}$ and $Y^\mathrm{LQF}$ denote the Hunt processes constructed in the previous section, and consider the processes \[
	A^{[F]}_s (t) = F(Y^s_t) - F(Y^s_0)
\]
where $s \in \{ \mathrm{NQF}, \mathrm{LQF}\}$. 

Let $A^{[F]}_s (t) = S^{[F]}_s (t) + N^{[F]}_s (t)$ be the decomposition of these processes given in Proposition \ref{revuz}(i). By Proposition \ref{revuz}(iii), the Revuz measure of the quadratic variation of $S^{[F]}_s$ satisfies $\mu _{\langle S^{[F]}_s \rangle} (G) = 2\CE^\CM_s (FG, F) - \CE^\CM_s (F^2, G)$. Using the definition of $\CE^\CM_s$ and applying the Leibniz rule to the right-hand side, we compute \[
	d\mu _{\langle S^{[F]}_s \rangle} (\omega) = \frac{n^2 \sigma^2}{\gamma^2} \| DF(\omega) \|_{L^2 (\omega)}^2 \, d\nu_s (\omega) \;.
\]

On the other hand, by standard properties of the Revuz correspondence (see Lemma 3.12 of \cite{DS22} and the references therein), the additive functional \[
	t \mapsto \frac{n^2 \sigma^2}{\gamma^2} \int_0^t \| DF(Y^s_r) \|^2_{L^2 (Y^s_r)} \, dr
\]
has exactly the same Revuz measure. By Proposition \ref{revuz}(ii), we conclude: 
\begin{lemma}
	For $F \in \CC^\CM$, \[
		\langle S^{[F]}_s \rangle_t = \frac{n^2 \sigma^2}{\gamma^2} \int_0^t \| DF(Y^s_r) \|^2_{L^2 (Y^s_r)} \, dr \;.
	\]
	By Lemma \ref{frechet}, it follows that \[
		\langle S^{[F]}_s \rangle_t =  n^2 \sigma^2 \int_0^t \sum_{i,j=0}^m (\partial_i p)( \partial_j p) Y^s_r (f_i f_j) \, dr \;.
	\]
\end{lemma}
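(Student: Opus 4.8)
The plan is to recognise $\langle S^{[F]}_s\rangle$ as a positive continuous additive functional (PCAF) and to pin it down through the injectivity of the Revuz correspondence, after which the second display is a one-line substitution. First I would record that $\CC^\CM$ is closed under products: if $F(\omega)=p(\omega(f_0),\dots,\omega(f_m))$ and $G(\omega)=q(\omega(g_0),\dots,\omega(g_l))$ lie in $\CC^\CM$ (recall $f_0=g_0=1$), then $FG$ and $F^2$ are again functionals of the same type, with $C^2$ symbols $pq$ and $p^2$ supported in a product of the required shape, so in particular $F,FG,F^2\in\CD[\CE^\CM_s]$ and Proposition \ref{revuz}(iii) applies. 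It gives, for all $G\in\CC^\CM$,
\[
	\mu_{\langle S^{[F]}_s\rangle}(G)=2\CE^\CM_s(FG,F)-\CE^\CM_s(F^2,G).
\]
Using the inner-product expression for $\CE^\CM_s$ from Definition \ref{dirichlet-form} together with the product rule $D(FG)=F\,DG+G\,DF$ and $D(F^2)=2F\,DF$ for the $L^2(M^\gamma_\mathrm{ref}(\psi))$-gradient, the cross terms cancel and what remains is
\[
	\mu_{\langle S^{[F]}_s\rangle}(G)=\frac{n^2\sigma^2}{\gamma^2}\int_\CM G(\omega)\,\|DF(\omega)\|_{L^2(\omega)}^2\,\nu_s(d\omega).
\]
Since $\CC^\CM$ is a core of $\CE^\CM_s$, hence dense in $L^2(\CM,\nu_s)$, this identifies the Revuz measure of $\langle S^{[F]}_s\rangle$ as $\tfrac{n^2\sigma^2}{\gamma^2}\|DF(\cdot)\|_{L^2(\cdot)}^2\,\nu_s$.

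Next I would verify that the functional $B_t\coloneq\tfrac{n^2\sigma^2}{\gamma^2}\int_0^t\|DF(Y^s_r)\|_{L^2(Y^s_r)}^2\,dr$ is itself a PCAF carrying the same Revuz measure. Continuity, monotonicity and additivity are immediate once one observes that $\omega\mapsto\|DF(\omega)\|_{L^2(\omega)}^2$ is a nonnegative Borel function on $\CM$; moreover, by the formula from Lemma \ref{frechet} this function vanishes off the set $\{\omega:\omega(1)\in(\eps,\eps^{-1})\}$ tied to the support of $p$ and is bounded there, so by Lemmas \ref{nqf-int}--\ref{qf-int} the measure $\tfrac{n^2\sigma^2}{\gamma^2}\|DF(\cdot)\|_{L^2(\cdot)}^2\,\nu_s$ is finite, hence smooth. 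The standard identification of the Revuz measure of a time average of a function along a symmetric Hunt process, which is exactly Lemma 3.12 of \cite{DS22} transported to the present setting, then shows $B$ has Revuz measure $\tfrac{n^2\sigma^2}{\gamma^2}\|DF(\cdot)\|_{L^2(\cdot)}^2\,\nu_s$. Since the Revuz correspondence is one-to-one (Proposition \ref{revuz}(ii)), $\langle S^{[F]}_s\rangle_t=B_t$, which is the first displayed identity.

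For the second identity I would simply substitute the gradient formula. By Lemma \ref{frechet} and the Riesz-representation computation preceding Definition \ref{dirichlet-form}, the $L^2(M^\gamma_\mathrm{ref}(\psi))$-gradient of $F\circ M^\gamma_\mathrm{ref}$ is $\gamma\sum_{i=0}^m(\partial_i p)\,f_i$ with the $\partial_i p$ evaluated at $(M^\gamma_\mathrm{ref}(\psi)(f_0),\dots,M^\gamma_\mathrm{ref}(\psi)(f_m))$, whence $\|DF(\omega)\|_{L^2(\omega)}^2=\gamma^2\sum_{i,j=0}^m(\partial_i p)(\partial_j p)\,\omega(f_if_j)$ for $\omega=M^\gamma_\mathrm{ref}(\psi)$. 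Plugging $\omega=Y^s_r$ into the first identity, the factors of $\gamma^2$ cancel and we obtain $\langle S^{[F]}_s\rangle_t=n^2\sigma^2\int_0^t\sum_{i,j=0}^m(\partial_i p)(\partial_j p)\,Y^s_r(f_if_j)\,dr$.

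I expect the only real obstacle to be the middle step: checking that $\tfrac{n^2\sigma^2}{\gamma^2}\|DF(\cdot)\|_{L^2(\cdot)}^2\,\nu_s$ lies in the class of smooth measures (charging no set of zero $\CE^\CM_s$-capacity) for which the naive time-average formula correctly computes the Revuz measure, and that no path regularity beyond the Hunt property is needed. This is precisely what is inherited from \cite{DS22}; everything else is the product-rule cancellation recorded above and the uniqueness clause of the Revuz correspondence.
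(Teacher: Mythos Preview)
Your proposal is correct and follows essentially the same argument as the paper: compute the Revuz measure of $\langle S^{[F]}_s\rangle$ via Proposition \ref{revuz}(iii) and the Leibniz rule for $D$, match it with the Revuz measure of the time-integral functional by invoking Lemma 3.12 of \cite{DS22}, and conclude by injectivity of the Revuz correspondence. If anything, you spell out more of the routine checks (closure of $\CC^\CM$ under products, smoothness of the measure) than the paper does, but the strategy is identical.
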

This last expression exactly matches the quadratic variation of $S^F_s$.

For the zero-energy term $N^{[F]}_s$, we can argue as in Lemma 3.14 of \cite{DS22} and Example 5.1.1 of \cite{FOT11} to obtain \[
	N^{[F]}_s (t) = \int_0^t \CL_s F(Y^s_r ) \, dr
\]
which matches the drift of $F_t$. More precisely, using integration-by-parts and Corollary 5.4.1 of \cite{FOT11}, the Revuz measure of the left-hand side is $\CL_s F (\omega) d\nu_s (\omega)$. The right-hand side has the same Revuz measure by the same standard properties used for the quadratic variation term, so the two are equivalent. Therefore, the one-dimensional projections $A^{[F]}_s$ have the same semimartingale decomposition as they would if $Y^s$ were a weak solution. The same can be said of the pairings $Y^s_t (h)$ by a localization argument, so these pairings solve the SDEs \eqref{nqf-sde} and \eqref{qf-sde}. This shows that the Hunt processes $Y^s$ are weak solutions, which finishes the proof of Theorem \ref{main-theorem}.

\section{Applications and discussion}

\subsection{Volume dynamic and invariant measure}

In this subsection we will prove Corollaries \ref{volume-corollary} and \ref{invariant-measure}. Corollary \ref{volume-corollary} follows directly from Theorem \ref{main-theorem} and the definition of a weak solution. Indeed, if one takes $h=1$ in \eqref{nqf-sde} and \eqref{qf-sde} and uses the fact that $Q_t (1) = Q_\mathrm{ref} (1)$ for all $t \geq 0$, the volume dynamics stated in the corollary appear readily.

For Corollary \ref{invariant-measure}, we will need a slightly finer analysis of the volume dynamic for LQF. We know that $\nuqf$ is a symmetrizing measure for the weak solution to LQF. As long as the process $Y^\mathrm{LQF}$ is conservative (meaning it is almost surely not killed in finite time), then $\nuqf$ is also an invariant measure. Since $Y^\mathrm{LQF}$ is a diffusion which is continuous up to the time it is killed, the only way it can leave $\CM$ in finite time is if $V_t$ shrinks to $0$ or blows up to infinity. Thus, it suffices to show that in the setting of Corollary \ref{invariant-measure}, the volume almost surely does not hit $0$ or infinity in finite time.

\begin{lemma}
	Suppose $Q_\mathrm{ref} < 0$ and $\sigma^2 \leq -2Q_\mathrm{ref} (1) / n$. Then solutions to the SDE \[
		dV_t = -n Q_\mathrm{ref} (1) \left( \rho - \frac{V_t}{V_\mathrm{ref}}\right) \, dt + n \sigma \sqrt{V_t} \, dB_t
	\]
	remain in $(0, \infty)$ for all $t > 0$ almost surely.
\end{lemma}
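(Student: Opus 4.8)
The displayed equation is a Cox--Ingersoll--Ross (CIR) type diffusion: putting $a = -nQ_\mathrm{ref}(1)\rho$ and $b = -nQ_\mathrm{ref}(1)/V_\mathrm{ref}$ --- both strictly positive, since $Q_\mathrm{ref}<0$ forces $Q_\mathrm{ref}(1)=Q_\mathrm{ref}V_\mathrm{ref}<0$ and $\rho\geq1$ --- it reads $dV_t = (a-bV_t)\,dt + n\sigma\sqrt{V_t}\,dB_t$. The plan is to (i) produce a unique nonnegative solution up to an explosion time, (ii) rule out explosion to $+\infty$, and (iii) show the solution never reaches $0$, the last being where the hypothesis $\sigma^2\leq-2Q_\mathrm{ref}(1)$ is used.

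For (i) I would invoke the Yamada--Watanabe theorem: the diffusion coefficient $v\mapsto n\sigma\sqrt{v^+}$ is $\tfrac12$-Hölder continuous and the drift $v\mapsto a-bv$ is affine hence Lipschitz, so pathwise uniqueness holds and, together with weak existence, yields a unique strong solution defined up to an explosion time $\zeta$. Comparison with the constant solution $0$ (the drift at $0$ is $a>0$, pointing into $(0,\infty)$) gives $V_t\geq0$ on $[0,\zeta)$. For (ii), the linear part of the drift is mean-reverting because $b>0$, so $W(v)=1+v$ satisfies $\CL W(v)=a-bv\leq aW(v)$ and Khasminskii's non-explosion criterion gives $\zeta=\infty$ almost surely; alternatively $\tfrac{d}{dt}\E[V_{t\wedge\zeta_R}]\leq a$ furnishes a bound uniform in the exit level $R$.

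For (iii) --- the heart of the matter --- I would use Feller's test for explosions (equivalently, the boundary classification of one-dimensional diffusions via the scale function). On $(0,\infty)$ the scale density of $V$ is $\exp\bigl(-\int^v 2(a-bw)/(n^2\sigma^2 w)\,dw\bigr)$, and a computation of this elementary integral shows it behaves like $c\,v^{-\alpha}$ as $v\downarrow0$, where
\[
	\alpha = \frac{2a}{n^2\sigma^2} = \frac{-2\rho\,Q_\mathrm{ref}(1)}{n\sigma^2}\,.
\]
Hence the scale function diverges to $-\infty$ at $0$ exactly when $\alpha\geq1$, i.e. when $2a\geq n^2\sigma^2$, and by Feller's test this is precisely the condition under which $0$ is an inaccessible (entrance) boundary, so that $V$ started in $(0,\infty)$ never hits $0$. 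The assumptions $Q_\mathrm{ref}<0$, $\rho\geq1$, $\sigma^2\leq-2Q_\mathrm{ref}(1)$ serve exactly to force $\alpha\geq1$. Combining with (ii), $V_t\in(0,\infty)$ for all $t>0$ almost surely.

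The hard part is step (iii): pinning down the sharp constant in the non-attainment criterion and verifying that the stated bounds on $\sigma^2$ and $\rho$ really do push the Feller exponent $\alpha$ up to (at least) $1$. Steps (i) and (ii) are standard for scalar SDEs with square-root diffusion coefficient and affine mean-reverting drift, and one could alternatively quote the classical boundary analysis of the CIR process wholesale.
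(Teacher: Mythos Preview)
Your strategy is exactly the paper's: recognise the SDE as a CIR diffusion and invoke the classical Feller boundary criterion for non-attainment of~$0$, together with the mean-reverting drift for non-explosion at~$+\infty$. The paper's proof is a two-line citation of standard CIR facts, so in spirit you match it and flesh out the details.

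There is, however, a genuine gap at the very end of your step~(iii). You correctly compute the Feller exponent
\[
\alpha \;=\; \frac{2a}{n^2\sigma^2} \;=\; \frac{-2\rho\,Q_\mathrm{ref}(1)}{n\sigma^2},
\]
so that $\alpha\geq 1$ is equivalent to $\sigma^2 \leq -2\rho\,Q_\mathrm{ref}(1)/n$. But the hypotheses you then invoke, namely $\sigma^2\leq -2Q_\mathrm{ref}(1)$ and $\rho\geq 1$, only give $\alpha \geq 1/n$, which for $n\geq 2$ is \emph{not} enough. The paper, by contrast, asserts (after an informal ``vertical rescaling by~$n$'') that the relevant Feller condition is $2(-Q_\mathrm{ref})(\rho V_\mathrm{ref})\geq \sigma^2$, i.e.\ $\sigma^2\leq -2\rho\,Q_\mathrm{ref}(1)$ without the factor~$1/n$; this version \emph{is} implied by the stated hypothesis. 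Your direct scale-function calculation for $dV=(a-bV)\,dt+c\sqrt{V}\,dB$ with $c=n\sigma$ disagrees with the paper's claimed threshold by precisely this factor of~$n$. You should either locate where the paper's rescaling legitimately absorbs an~$n$ into the volatility parameter, or conclude that the lemma's hypothesis is weaker than what your (standard) Feller computation actually requires; in either case the claim ``the assumptions serve exactly to force $\alpha\geq 1$'' is not justified as written.
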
 

\begin{proof}
	This SDE describes a CIR process with parameters $a=-n Q_\mathrm{ref}$, $b=\rho V_\mathrm{ref}$, and $n \sigma$. By standard results on CIR processes (see \cite{JYC09}), this process will almost surely not blow up to infinity, and it will almost surely not hit zero as long as $2 (-n Q_\mathrm{ref}) (\rho V_\mathrm{ref}) \geq n^2 \sigma^2$. This inequality is implied by our assumption. 
\end{proof}

\subsection{Liouville quantum gravity}\label{lqg}

Liouville conformal field theory (LCFT), also known as Liouville quantum gravity (LQG), is a canonical family of random Riemannian metrics which has been rigorously constructed for two-dimensional surfaces in \cite{DKRV16}, \cite{DRV16}, and \cite{GRV19}. A key feature of the result of \cite{DS22} is that under certain additional conditions, the flow constructed there has an LQG measure as an invariant measure. In other words, the dynamic they analyze is a stochastic quantization of LQG in the sense of Parisi and Wu (\cite{ParisiWu}).

More recently, \cite{DHKS} constructed analogous measures on a class of even-dimensional manifolds. In our notation, they consider adjusted Polyakov-Liouville measures of the form 
\[
	\nu (d\psi) = \exp (-\theta Q_\mathrm{ref} \omega_\mathrm{ref} (\psi) - m M^\gamma_\mathrm{ref} (\psi) (1)) \tilde{\mu}_\mathrm{ref} (d\psi) \;.
\]
They show that these measures are finite so long as (A1) and (A2) are satisfied, $\gamma \in (0, \sqrt{2n})$, and $\theta Q_\mathrm{ref} < 0$. Moreover, in the particular case where $Q_\mathrm{ref} < 0$ and $\theta = a_n (\tfrac{n}{\gamma} + \tfrac{\gamma}{2})$, this measure is conformally quasi-invariant (see Theorem 6.12 of \cite{DHKS} for the precise quasi-conformal behavior). Analyzing the LQF density \eqref{qf-measure-calc}, we see that this value of $\theta$ corresponds to $\rho = 1+a_n n \sigma^2 / 4$. Therefore, Corollary \ref{invariant-measure} implies that LQF with parameters $Q_\mathrm{ref} (1) < 0$, $f = Q_\mathrm{ref}$, $\rho = 1+a_n n \sigma^2 / 4$, and $\sigma^2 < -2Q_\mathrm{ref} (1) / n$ is a stochastic quantization of one of these higher-dimensional Polyakov-Liouville measures. While \cite{DHKS} constructs these measures in the regime $\gamma \in (0, \sqrt{2n})$, we only have invariant measures in the case where \[
\gamma \in \left( 0, \sqrt{2n} \wedge \sqrt{-n a_n Q_\mathrm{ref} (1) } \right)
\] 
because of the requirement that $\sigma^2 \leq -2Q_\mathrm{ref} (1) / n$ in Corollary \ref{invariant-measure}.

\subsection{Topological conditions}\label{topology}

Recall that for Theorem \ref{main-theorem} to hold, we need a closed and locally conformally flat manifold $(M, g)$ of even dimension $n$ such that (A1) and one of (A2) or (A2') holds, where these conditions are: \begin{enumerate}
	\item[(A1)] $P_g$ is positive semi-definite with kernel given by the constant functions.
	\item[(A2)] $Q_g (1) < Q_r (1)$, where $g_r$ is the round metric on the sphere $S^n$.
	\item[(A2')] $Q_g (1) < \rho^{-1} Q_r (1)$, where $g_r$ is the round metric on the sphere $S^n$.
\end{enumerate}
Now we will demonstrate that these conditions are satisfied by a wide class of manifolds. 

Let us start with condition (A1), which is also discussed in Section 2.1 of \cite{DHKS}. There, they observe that one setting where (A1) is satisfied is when $M$ is Einstein with non-negative Ricci curvature (recall that a manifold is Einstein if its Ricci curvature tensor is a scalar multiple of its metric). They also discuss a more general condition based on the spectral gap of $\Delta_g$, which can be used to show that certain hyperbolic manifolds satisfy (A1). For manifolds with positive $Q$-curvature, more is known. \cite{Gursky99} showed that in the four-dimensional case, it is sufficient that $Q_g (1) > 0$ and that $(M, g)$ has positive Yamabe invariant.

Next let us discuss condition (A2) and its stronger variant (A2'). \cite{CY95} computed $Q$-curvature explicitly for a number of notable four-dimensional examples. Some manifolds that satisfy (A2) include $S^2 \times S^2$, $S^1 \times S^3$, and the complex projective space $\mathbb{C} P^2$. For condition (A2'), the most interesting situation is that of Section \ref{lqg} when $\rho = 1 + a_n n \sigma^2 / 4$. For a fixed value of $\sigma^2$, $\rho$ rapidly approaches $1$ as the dimension increases, so this condition becomes closer and closer to (A2). Even in low dimensions, one can verify that all the manifolds from the previous paragraph satisfy (A2') as long as $\sigma^2$ isn't too large.

Lastly, we require that $M$ is locally conformally flat. Although this is a somewhat strong condition, we remark that it can likely be weakened. We used it only in Lemma \ref{CGMC-inversion} to obtain a field which could be input into Lemma \ref{log-GMC-generalized}. Since this lemma only requires that the remainder of the covariance after subtracting the logarithmic singularity is continuous and in $H^{n+\eps}_\mathrm{loc}$, rather than smooth, there may be a weaker geometric condition than local conformal flatness which guarantees this.

Let us also briefly discuss when the constant $Q$-curvature metric $g_\mathrm{ref}$ is unique. \cite{Vetois24} established uniqueness (up to scaling) of constant $Q$-curvature metrics for closed Einstein manifolds with positive scalar curvature in dimension $4$, so long as they are not conformally equivalent to the sphere. While this is not directly related to our analysis, the uniqueness of these metrics may be useful in analyzing the convergence of the stochastic flows to their invariant measures.

\begin{appendix}
\section{GMC inversion}

Recall that in the proof of Lemma \ref{CGMC-inversion}, we needed the existence of an inverse to a GMC map with a reference measure $\tilde{\omega}_{\mathrm{ref}}$ which is not Lebesgue, but does have a smooth Lebesgue density that is close to $1$. In the case of Lebesgue measure, such an inverse map was constructed in arbitrary dimension in \cite{Vihko24}. In this appendix we briefly summarize the main result and method of proof of \cite{Vihko24}, then generalize the argument to mildly non-Lebesgue measures as described above. For consistency with \cite{Vihko24} we adopt the notation used there, so it will differ slightly from the notation in the rest of this paper.

Let $D_0 \subseteq \R^d$ be a bounded domain and let $D$ be compactly contained in $D_0$. Let $G$ be a log-correlated field on $D_0$ with covariance \[
	\E [\langle G, f_1 \rangle \langle G, f_2 \rangle] = \int_{D_0 \times D_0} f_1 (x) f_2 (y) C_G (x, y) \, dx \, dy
\]
where $C_G (x, y) = \log (\lvert x - y \rvert^{-1} ) + g_G (x, y)$ and $g_G \in H^{d+\eps}_\mathrm{loc} (D_0 \times D_0)$ is continuous on the interior of $D_0$. By Theorem 
A of \cite{JSW19}, $G$ decomposes as $S+H$ on $D$, where $S$ and $H$ are centered Gaussian fields, $S$ is $\star$-scale invariant, and $H$ is Hölder continuous. More precisely, $S$ is a log-correlated field with covariance kernel \[
	C_S (x, y) = \int_1^\infty k(t(x-y)) \frac{dt}{t} = \log (\lvert x - y \rvert^{-1}) + g_S (x, y)
\]
where $k$ is a Hölder continuous rotationally symmetric function with support in $B_1 (0)$ such that $k(0) = 1$ and $(x, y) \mapsto k(x-y)$ is a covariance on $\R^d \times \R^d$. We also assume $k$ is taken to be non-negative, which is possible by the discussion in Section 2.1 of \cite{Vihko24}. Central to the argument of \cite{Vihko24} are the cut-off approximations to $S$, which are a family of coupled centered Gaussian fields with covariances
\[
	\E [S_\eps (x), S_\delta (y)] = K_{\delta, \eps} (x, y) \coloneq \int_1^{\eps^{-1} \wedge \delta^{-1}} k(s(x-y)) \frac{ds}{s} \;.
\]
As discussed in Section 2.1.1 of \cite{Vihko24}, the process $(\eps, x) \mapsto S_\eps (x)$ can be realized as a Gaussian process such that $S_\eps (x)$ is a martingale in $\eps^{-1}$ for fixed $x$, the field $S_\eps$ is almost surely Hölder continuous for fixed $\eps$, and $\langle S_\eps, f \rangle$ converges almost surely to a random variable with the same distribution as $\langle S, f \rangle$ for a fixed test function $f$. With these approximations, one can define the following auxiliary fields for $0 < \delta < \eps < 1$: \begin{align*}
	Z_{\eps, \delta, x} (u) &= S_\delta (x+\eps u) - S_\eps (x+\eps u) \;, \\
Y_{\eps, x} (u) &= S_\eps (x+\eps u) - S_\eps (x)\;.
\end{align*}
One readily checks (Proposition 2.3 of \cite{Vihko24}) that for fixed $\eps$ and $x$, $Z$ (viewed as a process of $\delta$ and $u$) is independent of both $S_\eps$ and $Y_{\eps, x}$ (viewed as processes of $u$). Moreover, we have the scale invariance property \[
	(Z_{\eps, \delta, x} (u))_{\{0 < \delta < \eps, u \in \R^d \}} \stackrel{d}{=} (S_{\delta / \eps} (u))_{\{0 < \delta < \eps, u \in \R^d \}} \;.
\]
To construct the GMC inversion map for $G$, \cite{Vihko24} first constructs it for $S$. Standard results from the theory of GMC measures (\cite{BP24}) imply that for $\gamma \in (0, \sqrt{2d})$, the measures \[
	\nu_{\gamma, \eps, S} (dx) \coloneq e^{\gamma S_\eps (x) - \frac{\gamma^2}{2} \E [S_\eps (x)^2]} \, dx
\]
converge weakly in probability to a limiting measure $\nu_{\gamma,S}$ such that $\nu_{\gamma,S} (D)$ has moments of orders $q \in (-\infty, 2d/\gamma^2)$. 

Let $\eta$ be a smooth non-negative test function on $\R^d$ with support in $B_1 (0)$ and $\int_{\R^d} \eta = 1$, and let $\eta_\eps (x) = \eps^{-d} \eta (x / \eps)$. From the definitions of the fields $Z$ and $Y$ and scale invariance, one can show the following representation (\cite{Vihko24} Section 2.3.1): \[
	\int_{\R^d} \eta_\eps (y-x) \, \nu_{\gamma, S} (dy) = e^{\gamma S_\eps (x) - \frac{\gamma^2}{2} \log (\eps^{-1})} \int_{\R^d} \eta (u) e^{\gamma Y_{\eps, x} (u)} \, \nu_{\gamma,S}^{\eps, x} (du) \;.
\]
Here $\nu_{\gamma,S}^{\eps, x}$ is the weak limit of the approximations \[
	e^{\gamma Z_{\eps, \delta, x} (u) - \frac{\gamma^2}{2} \E [Z_{\eps, \delta, x} (u)^2]} \, du
\]
on $B_1 (0)$ as $\delta \to 0$. $\nu_{\gamma,S}^{\eps, x}$ is independent of $S_\eps$ and $Y_{\eps, x}$, and is distributed like $\nu_{\gamma,S}$ on $B_1 (0)$ by scale invariance.

The GMC inversion map for $S$ can now be defined using the following deterministic function: \[
	F_{\gamma, \eps, \eta} (x) \coloneq \frac{1}{\gamma} \E \left[ \log \left( \int_{\R^d} \eta (u) e^{\gamma Y_{\eps, x} (u)} \nu_{\gamma,S}^{\eps, x} (du) \right)\right] - \frac{\gamma^2}{2} \log (\eps^{-1}) \;.
\]
The claim is that for any test function $\psi$ supported on $D$, the limit in probability as $\eps \to 0$ of \begin{align}\label{inversion-integral}
	\int_D \psi (x) \left( \frac{1}{\gamma} \log \left[ \int_{\R^d} \eta_\eps (y-x) \, \nu_{\gamma, S} (dy) \right] - F_{\gamma, \eps, \eta} 
(x) \right) \, dx
\end{align}
is $\langle S, \psi \rangle$. This would show that $S$ can be recovered from $\nu_\gamma$ on $D$ in a measurable way, as desired.

To prove the claim, \cite{Vihko24} shows that the $L^2$ distance between the integrals in \eqref{inversion-integral} and $\langle S_\eps, \psi \rangle$ decreases to zero as $\eps \to 0$.  Since $\langle S_\eps, \psi \rangle$ converges almost surely to $\langle S, \psi \rangle$, it follows that the integrals in \eqref{inversion-integral} converge in probability to $\langle S, \psi \rangle$. The difficult part is to establish that the $L^2$ distance converges to zero. For this, \cite{Vihko24} shows that $F_{\gamma, \eps, \eta} (x)$ is bounded uniformly for $x$ in the support of $\psi$, then argues carefully using dominated convergence. We defer further details of these arguments to more general case that we will consider shortly. 

To extend the inversion map for $S$ to one for $G$, \cite{Vihko24} observes (Lemma 3.2) that the GMC measure $\nu_{\gamma, G}$ satisfies \[
	\nu_{\gamma, G} (dx) = e^{\gamma H(x) - \frac{\gamma^2}{2} [g_G (x, x) - g_S (x, x)]} \nu_{\gamma, S} (dx) \;.
\]
Using this and the representation for $\nu_{\gamma, S}$, one can find an analogous representation for $\nu_{\gamma, G}$. The associated deterministic counter term turns out to be \[
	F_{\gamma, \eps, \eta} (x) + \frac{\gamma}{2} (g_S (x, x) - g_G (x, x)) \;.
\]
The only difference in the proof of convergence to $\langle G, \psi \rangle$ is that there is an additional remainder term which must be shown converges to zero. 

Now we consider what aspects of this argument must change for the slightly more general setting of Section \ref{conf-GMC}. Let $\omega$ be a measure on $D_0$ which is equivalent to Lebesgue measure with a smooth density $\lambda$ which lies in $[1-\rho, 1+\rho]$ for some $\rho < 1 \wedge (\sqrt{2d}/\gamma - 1)$. Let $G$ be a centered Gaussian field with the same covariance kernel as before, but this time with respect to the ground measure $\omega$. In other words, \[
	\E [\langle G, f_1 \rangle \langle G, f_2 \rangle] = \int_{D_0 \times D_0} f_1 (x) f_2 (y) C_G (x, y) \, \omega (dx) \, \omega (dy)
\]
where $C_G$ is as before. Theorem A of \cite{JSW19} gives a decomposition $G = S+H$ on $D$, where $S$ and $H$ are once again centered Gaussian fields, $H$ is Hölder continuous, and $S$ has covariance $C_S (x, y)$ with respect to the ground measure $\omega$. The cut-off approximations now have covariances \[
	\E [S_\eps (x) S_\delta (y)] = \lambda (x) \lambda (y) \int_1^{\eps^{-1} \wedge \delta^{-1}} k(s(x-y)) \frac{ds}{s}
\]
and the auxiliary fields $Z$ and $Y$ are defined in the same way. 

For a fixed $\eps > 0$ and $x \in D$, $Z_{\eps, \delta, x}$ is still independent from $S_\eps$ and $Y_{\eps, x}$ as before. Indeed, the covariance between $Z_{\eps, \delta, x}$ and $S_{\eps}$ can be calculated as \[
	\E [Z_{\eps, \delta, x} (u) S_\eps (v))] = \lambda (x+\eps u) \lambda (v) (K_{\delta, \eps} (x+\eps u, v) - K_{\eps, \eps} (x+\eps u, v)) = 0
\]
where the last equality follows because $\delta < \eps$, so $\delta^{-1} \wedge \eps^{-1} = \eps^{-1} \wedge \eps^{-1}$. Similarly, the covariance between $Z_{\eps, \delta, x}$ and $Y_{\eps, x}$ is
\begin{align*}
	\E[ Z_{\eps, \delta, x} (u) Y_{\eps, x} (v)] =&\lambda(x+\eps u) \lambda(x+\eps v) (K_{\delta, \eps} (x+\eps u, x+\eps v) - K_{\eps, \eps} (x+\eps u, x+\eps v)) \\
	&- \lambda (x + \eps u) \lambda (x) (K_{\delta, \eps} (x+\eps u, x) - K_{\eps, \eps} (x+\eps u, x)) = 0 \;.
\end{align*}
Moreover, $S$ is nearly scale-invariant because \begin{align*}
	\E [Z_{\eps, \delta_1, x} (u) Z_{\eps, \delta_2, x} (v)] &= \lambda (x+\eps u) \lambda (x+\eps v) (K_{\delta_1, \delta_2} (\eps u, \eps v) - K_{\eps, \eps} (\eps u, \eps v)) \\
	&= \lambda (x+\eps u) \lambda (x+\eps v) \int_{\eps^{-1}}^{\delta_1^{-1} \wedge \delta_2^{-1}} k(\eps s(u-v)) \frac{ds}{s}  \\
	&= \lambda (x+\eps u) \lambda (x+\eps v) \int_{1}^{\eps \delta_1^{-1} \wedge \eps \delta_2^{-1}} k(s(u-v)) \frac{ds}{s} \\
	&= \frac{\lambda(x+\eps u) \lambda (x+\eps v)}{\lambda(u) \lambda (v)}\E [S_{\delta_1 / \eps} (u), S_{\delta_2 / \eps} (v)] \;.
\end{align*}

Changing Lebesgue measure to $\omega$ affects the log-correlated field, but it also affects the ground measure for the GMC measure. To construct a GMC measure with a non-Lebesgue ground measure we use Shamov's theory of subcritical GMC. By Theorem 25 of \cite{Shamov16}, to construct the GMC measure associated to $S$, it suffices to show that the random variables \[
	\int_{D_0} e^{\gamma S_\eps (x) - \frac{\gamma^2}{2} \E [S_\eps (x)^2]} \, \omega(dx)
\]
are uniformly integrable. In the Lebesgue setting, uniform integrability holds when $\gamma < \sqrt{2d}$. Let $S^l_\eps$ be a family of approximations to a scale-invariant field with Lebesgue ground measure, and choose $\gamma^l \in (\gamma (1+\rho), \sqrt{2d})$, which is possible by our assumption on $\rho$. Then the covariances of our modified field $\gamma S_\eps$ are bounded above by the covariances of $\gamma^l S^l_\eps$ (note that since we took $k$ to be non-negative, all covariances are non-negative). Since uniform integrability holds for $\gamma^l S^l_\eps$ (see Section 3.3 of \cite{BP24}), it follows from Theorem 4.5.9 of \cite{Bogachev07} that there is a function $G: [0, \infty) \to [0, \infty)$ which is superlinear, convex, and increasing, such that \[
	\sup_{\eps > 0} \E \left[ G \left(\int_{D_0} e^{\gamma^l S^l_\eps (x) - \frac{(\gamma^l)^2}{2} \E [S^l_\eps (x)^2]} \, dx \right) \right] < \infty \;.
\]
By Kahane's convexity inequality (\cite{Kahane85}), we then have
\[
	\sup_{\eps > 0} \E \left[ G_0 \left(\int_{D_0} e^{\gamma S_\eps (x) - \frac{\gamma^2}{2} \E [S_\eps (x)^2]} \, \omega(dx) \right) \right] < \infty \;.
\]
where $G_0 (x) = G(x / (1 + \rho))$ to account for the change in measure. Finally, we use the other direction of Theorem 4.5.9 of \cite{Bogachev07} to deduce uniform integrability for $\gamma S_\eps$. Thus, we conclude that the measures \[
	\nu_{\gamma, \eps, S} (dx) \coloneq e^{\gamma S_\eps (x) - \frac{\gamma^2}{2} \E [S_\eps (x)^2]} \, \omega(dx)
\] 
converge weakly in probability to a limiting GMC measure $\nu_{\gamma, S}$. 

We will also need the existence of some positive moments of $\nu_{\gamma, S} (D)$. In the Lebesgue case, standard results on GMC measures imply $\E [\nu_{\gamma, S} (D)^q]$ is finite for all $q \in (-\infty, 2d/\gamma^2)$. Since we have multiplied $S_\eps$ by $\lambda$ in this new case, we can use Kahane's inequality as before to deduce that the moments still exist for $q \in (-\infty, 2d/(\gamma^2 (1+\rho)^2)$ (note that the convexity of $x^q$ fails for $q \in (0, 1)$, but these moments are guaranteed by the existence of a first moment). Since we assumed $\rho  < \sqrt{2d} / \gamma - 1$, this still guarantees moments up to some $q$ greater than $1$. In particular, $\nu_{\gamma, S} (D)$ has logarithmic moments of all orders; we will make use of this fact soon.

This measure has a similar representation to the Lebesgue case. To see this, we compute for $x \in D$ and $\eps < d(D, \partial D_0)$ that \begin{align*}
	&\int_{\R^d} \eta_\eps (y-x) \, \nu_{\gamma, S} (dy) \\
	&= \lim_{\delta \to 0} \int_{\R^d} \eta_\eps (y-x) e^{\gamma S_\delta (y) - \frac{\gamma^2}{2} \lambda(y)^2 \log (\delta^{-1})} \, \omega (dy) \\
	&=  \lim_{\delta \to 0} \int_{\R^d} \frac{\lambda (x+\eps u)}{\lambda (u)} \eta (u) e^{\gamma S_\delta (x+ \eps u) - \frac{\gamma^2}{2} \lambda (x+\eps u)^2 \log (\delta^{-1})} \, \omega (du) \\
	&= e^{\gamma S_\eps (x)} \lim_{\delta \to 0} \int_{\R^d} \frac{\lambda (x+\eps u)}{\lambda (u)} \eta (u) e^{-\frac{\gamma^2}{2} \lambda (x+\eps u)^2 \log (\eps^{-1})} e^{\gamma Y_{\eps, x} (u)} e^{\gamma Z_{\eps, \delta, x} (u) - \frac{\gamma^2}{2} \E [Z_{\eps, \delta, x} (u)^2]} \, \omega (du) \\
	&\coloneq e^{\gamma S_\eps (x)} \int_{\R^d} \frac{\lambda (x+\eps u)}{\lambda (u)} \eta (u) e^{-\frac{\gamma^2}{2} \lambda (x+\eps u)^2 \log (\eps^{-1})} e^{\gamma Y_{\eps, x} (u)} \, \nu_{\gamma, S}^{\eps, x} (du) \;.
\end{align*}
To see that the limit in the last line exists, note that by near-scale invariance it is the limit of measures with the same distributions as approximations to the GMC measure with fields $\tfrac{\lambda(x + \eps \cdot)}{\lambda (\cdot)} S_{\delta / \eps}(\cdot)$. This limit exists and possesses the same moments as $\nu_{\gamma, S}$ on $B_1 (0)$, uniformly in $x \in D$ and $\eps > 0$, by exactly the same uniform integrability argument as before. Moreover, $\nu^{\eps, x}_{\gamma, S}$ is independent of $S_\eps$ and $Y_{\eps, x}$ because the analogous independence holds for $Z_{\eps, \delta, x}$.

Before we proceed with the construction of the inverse map, we must first check a few properties of the fields $Y_{\eps, x}$ that are essential to the arguments in \cite{Vihko24}. First, we compute the covariance
\begin{align*}
	\E[Y_{\eps, x} (u) Y_{\eps, x} (v)] &=\int_1^{\eps^{-1}} \frac{\lambda(x+\eps u) \lambda(x + \eps v) k(s \eps ( u - v )) - \lambda(x+\eps u) \lambda(x) k(s \eps u) }{s} \\
	&\qquad \qquad +\frac{ -\lambda(x) \lambda(x + \eps v) k(s \eps v) + \lambda(x)^2}{s} \, ds\\
	&=\int_\eps^{1} \frac{\lambda(x+\eps u) \lambda(x + \eps v) k(s ( u - v )) - \lambda(x+\eps u) \lambda(x) k(s  u ) }{s} \\
	&\qquad \qquad +\frac{- \lambda(x) \lambda(x + \eps v) k(s v ) + \lambda(x)^2}{s} \, ds \;.
\end{align*}
We can use this to compute
\begin{align*}
	\E [\lvert Y_{\eps, x} (u) - Y_{\eps, x} (v) \rvert^2] &= \E [\lvert Y_{\eps, x} (u) \rvert^2] + \E [\lvert Y_{\eps, x} (v) \rvert^2] - 2 \E [Y_{\eps, x} (u) Y_{\eps, x} (v)] \\
	&= \int_{\eps}^1 \frac{\lambda(x+\eps u)^2 + \lambda (x+\eps v)^2 - 2\lambda(x + \eps u) \lambda (x + \eps v) k (s ( u - v ))}{s} \, ds  \\
	&= 2\lambda(x+\eps u) \lambda(x + \eps v) \int_\eps^1 \frac{1 - k (s ( u - v ))}{s} \, ds \\
	&\qquad + \int_\eps^1 \frac{\lambda (x + \eps u)^2 + \lambda (x + \eps v)^2 - 2 \lambda (x + \eps u) \lambda (x + \eps v)}{s} \, ds \\
	&\leq C \lvert u - v \rvert^{\alpha} + C' (\eps^2 \log \eps^{-1}) \lvert u - v \rvert^2 \\
	&\lesssim \lvert u - v \rvert^\alpha
\end{align*}
where $\alpha$ is the Hölder exponent of $k$ and the constants on each line are independent of $\eps > 0$.

On the other hand, for $x \neq x'$ we have
\begin{align*}
	&\E [Y_{\eps, x} (u) Y_{\eps, x'} (v)]= \\ &\int_\eps^{1} \frac{\lambda(x+\eps u) \lambda(x' + \eps v) k(s ( \eps^{-1} (x - x') + u - v )) - \lambda(x+\eps u) \lambda(x') k(s ( \eps^{-1} (x - x') + u )) }{s} \\
	&\qquad \qquad +\frac{- \lambda(x) \lambda(x' + \eps v) k(s ( \eps^{-1} (x - x') + v )) + \lambda(x) \lambda (x') k(s \eps^{-1} ( x - x' ))}{s} \, ds \\
	&= \lambda (x + \eps u) \int_{\eps}^1 \frac{(\lambda (x' + \eps v) - \lambda (x')) k(s ( \eps^{-1} (x - x') + u - v ))}{s} \, ds \\
	&\qquad + \lambda(x+\eps u) \lambda(x') \int_\eps^1 \frac{k(s ( \eps^{-1} (x - x') + u - v )) - k(s ( \eps^{-1} (x - x') + u ))}{s} \, ds \\
	&\qquad - \lambda (x) \int_\eps^1 \frac{(\lambda (x'+\eps v) - \lambda(x')) k(s ( \eps^{-1} (x - x') + v ))}{s} \, ds\\
	&\qquad - \lambda(x) \lambda (x') \int_\eps^1 \frac{k(s ( \eps^{-1} (x - x') + v )) - k(s \eps^{-1} ( x - x' ))}{s} \, ds \;.
\end{align*}
The first and third integrals are bounded by $C (\eps \log \eps^{-1}) \lvert v \rvert$ for a constant $C$ independent of $\eps$, by the smoothness of $\lambda$. The integrands in the second and fourth integrals are bounded by $C' \lvert v \rvert^\alpha s^{\alpha - 1}$ for a constant $C'$ independent of $\eps$, and converge to zero pointwise as $\eps \to 0$. In particular, one can apply dominated convergence to those two integrals, and conclude that all four integrals converge to zero as $\eps \to 0$.

There are several conclusions we can draw from these calculations. First of all, we see that the restrictions of $Y_{\eps, x}$ and $Y_{\eps, x'}$ to the unit ball converge jointly in distribution to independent centered Gaussian processes, as in Proposition 2.11 of \cite{Vihko24}. Second, we can combine this with Dudley's theorem as in Lemma 2.16 of \cite{Vihko24} to show that the supremum of $\lvert Y_{\eps, x} \rvert$ over the unit ball has all positive and exponential moments, uniformly in $x$ and $\eps$.

We can now define the counter term we will use for the GMC inversion map for $S$: \[
	F_{\gamma, \eps, \eta} (x) \coloneq \frac{1}{\gamma} \E \left[ \log \left( \int_{\R^d} \frac{\lambda (x+\eps u)}{\lambda (u)} \eta (u) \eps^{\frac{\gamma^2}{2} \lambda (x+\eps u)^2} e^{\gamma Y_{\eps, x} (u)} \, \nu_{\gamma, S}^{\eps, x} (du) \right) \right] \;.
\]

As one would expect, if $\lambda = 1$ this is exactly the $F$ that appeared in the Lebesgue case. Since $\lambda$ lies in $[1-\rho, 1+\rho]$, one can show that $F_{\gamma, \eps, \eta}$ is bounded over $D$ using exactly the same argument used to prove boundedness in Proposition 3.5 of \cite{Vihko24}, where the key ingredients are the logarithmic moments of $\nu^{\eps, x}_{\gamma, S}$ and the bounded moments of the supremum of $Y_{\eps, x}$.

As in the Lebesgue case, the remaining difficulty is now to show that integrals of the form of \eqref{inversion-integral} will be close to $\langle S_\eps, \psi \rangle$ in $L^2$ for small $\eps$. To see this, we would like to write \begin{align*}
	&\E \left[ \left( \int_D \psi(x) \left( \frac{1}{\gamma} \log \left( \int_{\R^d} \eta_\eps (y-x) \, \nu_{\gamma, S} (dy) \right) - F_{\gamma, \eps, \eta} (x)\right) \, dx - \langle S_\eps, \psi \rangle \right)^2 \right] \\
	&= \E \left[ \left( \int_D \psi(x) (A_\eps (x) - \E [A_\eps (x)]) \, dx \right)^2 \right] \\
	&= \int_{D \times D} \psi(x) \psi(x') \Cov (A_\eps (x), A_\eps (x')) \, dx  \, dx'
\end{align*}
where \[
	A_\eps (x) \coloneq \frac{1}{\gamma} \log \left( \int_{\R^d} \frac{\lambda (x+\eps u)}{\lambda (u)} \eta (u) e^{-\frac{\gamma^2}{2} (\lambda (x+\eps u)^2 - \lambda(x)^2) \log (\eps^{-1})} e^{\gamma Y_{\eps, x} (u)} \, \nu_{\gamma, S}^{\eps, x} (du) \right) \;.
\]
Here the first equality follows from the representation of $\nu_{\gamma, S}$ and the second will follow by Fubini. We have not yet justified the usage of Fubini here, but we will soon see that second moments of $A_\eps (x)$ are bounded, so Fubini will be valid. Note that the $\lambda(x)^2$ term in the exponent of $A_\eps (x)$ did not originally appear in the representation of $\nu_{\gamma, S}$, but we can include it here because it only amounts to adding to $A_\eps (x)$ a deterministic function of $\eps$ and $x$, which does not affect any of the quantities above. This term ensures that the first exponential factor in the integrand of $A_\eps (x)$ converges to $1$ as $\eps \to 0$, uniformly in $x$. 

To show that the $L^2$ distance converges to zero, it suffices by dominated convergence to show that $\Cov (A_\eps (x), A_\eps (x'))$ is bounded uniformly in $\eps > 0$ and $x, x' \in \supp \psi$, and that $\Cov (A_\eps (x), A_\eps (x')) \to 0$ as $\eps \to 0$ for any $x \neq x'$. Both of these are proven in \cite{Vihko24} in the $\lambda=1$ case, so we only have to see what changes when we allow $\lambda$ to vary from $1$.

Observe that $\lambda(x+\eps u) / \lambda(u) \in [\tfrac{1-\rho}{1+\rho}, \tfrac{1+\rho}{1-\rho}]$. Also, we saw that  $\nu^{\eps, x}_{\gamma, S}$ possesses all logarithmic moments on $B_1 (0)$, uniformly in $x$ and $\eps$. The same argument as in Lemma 3.7 of \cite{Vihko24} then shows that \[
	\E [A_\eps (x)^2] \lesssim \left( 1 + \E \left[\sup_{\lvert u \rvert \leq 1} Y_{\eps, x} (u)^2 \right] + \frac{1}{\gamma^2} \E [(\log (\nu_{\gamma, S}^{\eps, x} (\eta)))^2]\right) \;.
\]
We know that the right-hand side is bounded because of our moment bounds on the supremum of $Y_{\eps, x}$ and our logarithmic moment bounds on $\nu^{\eps, x}_{\gamma, S}$. By Cauchy-Schwarz, the covariances of the $A_\eps (x)$ are uniformly bounded above by \[
	2 \left( \sup_{x \in \supp \psi} \sup_{\eps > 0} \sqrt{\E [A_\eps (x)^2]}\right)^2 \;.
\]
We now know that this is finite, so we can conclude uniform boundedness of the covariances. In particular, this justifies our earlier use of Fubini's theorem.

Next we look at the claim about the limit of the covariances away from the diagonal as $\eps \to 0$. Recall we showed that as $\eps \to 0$, the restrictions of $Y_{\eps, x} (u)$ and $Y_{\eps, x'} (u)$ to $B_1 (0)$ converge jointly in distribution to independent centered Gaussian fields $Y_x (u)$ and $Y_{x'} (u)$. By the argument in Proposition 2.22 of \cite{Vihko24}, the measures $\nu^{\eps, x}_{\gamma, S}$ are independent on $B_1 (0)$ for different choices of $x$ and sufficiently small $\eps$. We denote by $\nu^x_{\gamma, S}$ measures which are equal in distribution to GMC measures associated to fields $\tfrac{\lambda(x)}{\lambda(\cdot)} S(\cdot)$ and independent for each $x$. Then the $\nu^{\eps, x}_{\gamma, S}$ converge weakly in distribution to $\nu^x_{\gamma, S}$ on $B_1 (0)$ by Theorem 25 of \cite{Shamov16} together with our uniform moment bounds.

We claim that the limit of the covariances of the $A_\eps (x)$ is given by the covariances of \[
	A (x) \coloneq \frac{1}{\gamma} \log \left( \int_{\R^d} \frac{\lambda (x)}{\lambda (u)} \eta(u) e^{\gamma Y_{x} (u)} \nu^x_{\gamma, S} (du) \right) \;.
\]
To see this, we first need that the pairs $(A_\eps (x), A_\eps (x'))$ converge jointly in distribution to $(A(x), A(x'))$ as $\eps \to 0$. Therefore are two key differences between our case and that of Lemma 3.7 of \cite{Vihko24} in establishing this. The first is the presence of the additional terms in the integrand that involve $\lambda$. These simply contribute additive factors to $A_\eps (x)$ and can be handled in a standard way by Slutsky's theorem, so we can ignore them and show convergence of
\[
	\frac{1}{\gamma} \log \left( \int_{\R^d} \frac{\lambda (x)}{\lambda (u)}\eta (u)e^{\gamma Y_{\eps, x} (u)} \, \nu_{\gamma, S}^{\eps, x} (du) \right)
\]
to $A(x)$, jointly in distribution. The second, more pressing issue is that the measures $\nu^{\eps, x}_{\gamma, S}$ are no longer equal in distribution for all $\eps > 0$. To see this, we proceed as in Lemma 3.7 of \cite{Vihko24} but we must work with functions that depend jointly on $Y$ and $\nu$. More precisely, \cite{Vihko24} uses that the function
\[
	f \mapsto \frac{1}{\gamma} \int_{\R^d} \eta(u) e^{\gamma f(u)} \, \mu (du)
\]
on $C(\overline{B_1 (0)})$ is almost surely continuous at $Y_x$, where $\mu$ is a fixed instantiation of $\nu^x_{\gamma, S}$. We must instead treat the above as a function of both $f$ and $\mu$, where the space of measures is equipped with the weak topology. Joint continuity is clear from the proof of continuity given in the one variable case in \cite{Vihko24} and the definition of weak convergence of measures, and the rest of the proof of joint convergence in distribution then follows in the same way as \cite{Vihko24} using the continuous mapping theorem.

To turn this into convergence of the covariances we need to swap the limit and the covariance, but this only requires a uniform bound on the fourth moment $\E [(A_\eps (x)^4]$ over $\eps$ and $x$, as explained in the proof of Lemma 3.7 of \cite{Vihko24}. This follows by a similar argument to the one for the second moment, because \[
	\E [A_\eps (x)^4] \lesssim \left( 1 + \E \left[\sup_{\lvert u \rvert \leq 1} Y_{\eps, x} (u)^4 \right] + \frac{1}{\gamma^4} \E [(\log (\nu_{\gamma, S}^{\eps, x} (\eta)))^4]\right) < \infty
\] 
uniformly in $\eps$ and $x$. The fact that $\Cov (A(x), A(x')) = 0$ for $x \neq x'$ now follows from the fact that the fields $Y_x$ and the measures $\nu^x_{\gamma, S}$ are independent for different choices of $x$. This establishes the $L^2$ convergence we wanted, and thus concludes the construction of the inverse map for $S$. 

To extend this to an inverse map for $G=S+H$, we first compute for test functions $f$ supported in $D$ that \begin{align*}
	\nu_{\gamma, G} (f) &= \lim_{\delta \to 0} \int_{\R^d} f(u) e^{\gamma G_\delta (u) - \frac{\gamma^2}{2} \E [G_\delta (u)^2]} \, \omega(du) \\
	&= \lim_{\delta \to 0} \int_{\R^d} f(u) e^{\gamma H_\delta (u)} e^{\frac{\gamma^2}{2} (\E [S_\delta (u)^2] - \E [G_\delta (u)^2])} \\
	&\qquad e^{\gamma S_\delta (u) - \frac{\gamma^2}{2} \E [S_\delta (u)^2]} \, \omega (du) \\
	&= \int_{\R^d} f(u) e^{\gamma H(u) - \frac{\gamma^2}{2} \lambda(u)^2 (g_G (u, u) - g_S (u, u))} \, \nu_{\gamma, S} (du)
\end{align*}
where $G_\delta$ is an approximation to $G$ such that such that the logarithmic divergences of the covariances of $G_\delta (u)$ and $S_\delta (u)$ cancel. This can be used to deduce the following representation for $\nu_{\gamma, G}$: \begin{align*}
	&\log \left( \int_{\R^d} \eta_\eps (y-x) \, \nu_{\gamma, G} (dy) \right) \\
	&= \gamma H(x) - \frac{\gamma^2}{2} \lambda(x)^2 g_{G, S} (x) + \log \left( \int_{\R^d} \eta_\eps (y-x) \, \nu_{\gamma, S} (dy) \right)\\
	&\qquad + \log \left( \frac{\int_{\R^d} \eta_\eps (y-x) e^{\gamma (H(y) - H(x)) - \frac{\gamma^2}{2} (\lambda(y)^2 g_{G, S} (y) - \lambda(x)^2 g_{G, S} (x))} \, \nu_{\gamma, S} (dy)}{\int_{\R^d} \eta_\eps (y-x) \, \nu_{\gamma, S} (dy)} \right)
\end{align*}
where $g_{G, S} (u) = g_G (u, u) - g_S (u, u)$. Let us denote the summand in the last line by $R_\eps (x)$. The same continuity argument as in the proof of Theorem A in \cite{Vihko24} shows that \[
	\lim_{\eps \to 0} \int_D \psi(x) R_\eps (x) \, dx = 0
\]
for any test function $\psi$. Let the counter term in this case be given by \[
	F_{\gamma, \eps, \eta, G} (x) \coloneq F_{\gamma, \eps, \eta} (x) - \frac{\gamma}{2} \lambda(x)^2 g_{G, S} (x) \;.
\]
Then we have \begin{align*}
	&\lim_{\eps \to 0} \int_D \psi(x) \left[ \frac{1}{\gamma} \log \left( \int_{\R^d} \eta_\eps (y-x) \, \nu_{\gamma, G} (dy) \right) - F_{\gamma, \eps, \eta, G} (x) \right] \, dx \\
	&= \langle S, \psi \rangle + \int_D \psi(x) H(x) \, dx + \frac{1}{\gamma} \lim_{\eps \to 0} \int_D \psi(x) R_\eps (x) \, dx = \langle G, \psi \rangle
\end{align*}
and we obtain an inverse map for $G$. More precisely, we have the following generalization of Theorem A of \cite{Vihko24}:

\begin{lemma}\label{log-GMC-generalized}
	Let $D_0 \subset \R^d$ be a bounded domain, $D$ compactly contained in $D_0$, and $\omega$ a measure on $D_0$ with smooth density $\lambda \coloneq \tfrac{d\omega}{dm}$ in $[1-\rho, 1+\rho]$ for some $\rho < \min (1, \sqrt{2d}/\gamma - 1)$, where $m$ denotes Lebesgue measure. Let $G$ be log-correlated with respect to the ground measure $\omega$, with covariance kernel $C_G (x, y) = \log (\lvert x - y \rvert)^{-1} + g_G (x, y)$ where $g_G \in H^{d+\eps}_\mathrm{loc} (D_0 \times D_0)$ is continuous on the interior of $D_0$. For $\gamma \in (0, \sqrt{2d})$, let $\nu_{\gamma, G}$ be the GMC measure associated to $G$ with respect to $\omega$. 
	
	Let $\eta$ be a test function on $\R^d$ with $\eta \geq 0$, $\int_{\R^d} \eta = 1$, and $\supp \eta \subset B_1 (0)$. Then there is a deterministic function $F_{\gamma, \eps, \eta} (x)$ such that for any test function $\psi$ supported in $D$, \[
		\int_{D_0} \psi(x) \left( \frac{1}{\gamma} \log \left[ \int_{\R^d} \eta_\eps (y-x) \, \nu_{\gamma, G} (dy) \right] - F_{\gamma, \eps, \eta} (x) \right)\, dx \to \langle G, \psi \rangle
	\]
	as $\eps \to 0$. Note that the integral above is defined only when $\eps < d(D, \partial D_0)$. In particular, this implies the existence of a measurable map $X^\gamma$ from the space of measures on $D$ to the space of distributions such that $X^\gamma (\nu_{\gamma, G}) = G$ almost surely (see Remark \ref{as-conv}).
\end{lemma}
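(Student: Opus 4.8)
The plan is to follow the strategy of \cite{Vihko24} closely, keeping careful track of the modifications forced by the non-constant density $\lambda = d\omega/dm$. First I would apply Theorem A of \cite{JSW19} to decompose $G = S + H$, where $H$ is Hölder continuous and $S$ is the $\star$-scale-invariant field carrying the same logarithmic singularity but integrated against the ground measure $\omega$; the whole problem then reduces to constructing a measurable inverse for $S$ and afterwards correcting by the regular field $H$. For the $S$-part I would set up the cut-off approximations $S_\eps$ and the auxiliary fields $Z_{\eps,\delta,x}$, $Y_{\eps,x}$ exactly as in \cite{Vihko24}, and first check the two structural facts the argument rests on: (i) for fixed $\eps$, $Z_{\eps,\delta,x}$ is independent of $S_\eps$ and of $Y_{\eps,x}$ --- this survives because the relevant covariances contain the factor $K_{\delta,\eps}-K_{\eps,\eps}$, which vanishes for $\delta<\eps$, and the extra $\lambda$-values are just multiplicative constants; and (ii) the scale invariance $(Z_{\eps,\delta,x}(u))_{\delta,u}\stackrel{d}{=}(S_{\delta/\eps}(u))_{\delta,u}$, which is a short change of variables in the $\star$-scale integral and is again unaffected by the $\lambda$ prefactors.

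Next I would build the GMC measure $\nu_{\gamma,S}$ via Shamov's criterion (Theorem 25 of \cite{Shamov16}): uniform integrability of $\int_D e^{\gamma S_\eps - \frac{\gamma^2}{2}\E[S_\eps^2]}\,\omega(dx)$ reduces to the case $\lambda\equiv 1$ because $S_\eps$ is $\lambda$ times its Lebesgue analogue and $(1+\rho)\gamma<\sqrt{2d}$ keeps the problem subcritical; the same comparison gives $\E[\nu_{\gamma,S}(D)^q]<\infty$ for $q<2d/(\gamma^2(1+\rho))$, which by the hypothesis $\rho<\sqrt{2d}/\gamma-1$ is an interval strictly containing $1$, so $\nu_{\gamma,S}(D)$ has logarithmic moments of all orders. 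With this in hand I would derive the integral representation of $\int_{\R^d}\eta_\eps(y-x)\,\nu_{\gamma,S}(dy)$ in terms of $e^{\gamma S_\eps(x)}$, the ratio $\lambda(x+\eps u)/\lambda(u)$, the factor $e^{-\frac{\gamma^2}{2}\lambda(x+\eps u)^2\log(\eps^{-1})}$, the field $Y_{\eps,x}$, and the independent copy $\nu_{\gamma,S}^{\eps,x}$; define the deterministic counter term $F_{\gamma,\eps,\eta}(x)$ as the expectation of $\tfrac{1}{\gamma}\log$ of the random integral factor; and then prove that the quantity in the statement converges to $\langle S,\psi\rangle$ in $L^2$.

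The expectation being correct by construction, the content is that the covariance $\Delta_{\gamma,\eps}(x,x')$ of the random log-integrals is bounded uniformly over $\eps>0$ and $x,x'\in\supp\psi$ and tends to $0$ when $x\neq x'$. I would first run the calculus reductions: because $(\lambda(x+\eps u)^2-\lambda(x)^2)\log(\eps^{-1})\to 0$ and $\lambda(x+\eps u)/\lambda(u)\to\lambda(x)/\lambda(u)$ uniformly over $x\in\supp\psi$, $u\in\supp\eta$, these $\eps$-dependent factors may be replaced by their limits without affecting whether the relevant moments are bounded, reducing matters to the cleaner quantities $B_\eps(x)$. The comparison with \cite{Vihko24} is then purely quantitative: $\lambda(x)/\lambda(u)\in[(1-\rho)^2,(1+\rho)^2]$, $\eps^{\frac{\gamma^2}{2}\lambda(x)^2}\le 1$, and $Y_{\eps,x}(u)$ differs from its Lebesgue analogue by a bounded multiplicative factor, so the second-moment bound of Lemma 3.7 of \cite{Vihko24} transfers; Cauchy--Schwarz gives uniform boundedness of the covariances, and passing to the limiting independent fields $Y_x$ and measures $\nu^x_{\gamma,S}$ (with the usual fourth-moment estimate justifying the interchange of limits) gives $\Delta_{\gamma,\eps}(x,x')\to 0$ for $x\neq x'$; dominated convergence then finishes the $L^2$ convergence, yielding the inverse map for $S$.

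Finally, to pass from $S$ to $G=S+H$ I would compute $\nu_{\gamma,G}(du)=e^{\gamma H(u)-\frac{\gamma^2}{2}\lambda(u)^2(g_G(u,u)-g_S(u,u))}\nu_{\gamma,S}(du)$ by taking $\delta\to 0$ in the approximations (the $\lambda(u)^2$ appearing because $\E[G_\delta(u)^2]$ and $\E[S_\delta(u)^2]$ both carry that factor), use it to write $\log\int\eta_\eps(y-x)\,\nu_{\gamma,G}(dy)$ as $\gamma H(x)-\frac{\gamma^2}{2}\lambda(x)^2 g_{G,S}(x)+\log\int\eta_\eps(y-x)\,\nu_{\gamma,S}(dy)$ plus a remainder $R_\eps(x)$, set $F_{\gamma,\eps,\eta,G}(x)=F_{\gamma,\eps,\eta}(x)-\tfrac{\gamma}{2}\lambda(x)^2 g_{G,S}(x)$, and invoke $\int_D\psi(x)R_\eps(x)\,dx\to 0$ (the same argument as in the proof of Theorem A of \cite{Vihko24}, using that $H$ is Hölder and $g_{G,S}$ continuous); combining with the $S$-convergence and the continuity of $H$ gives convergence to $\langle G,\psi\rangle$, and since the limit is a measurable functional of $\nu_{\gamma,G}$ this produces the map $X^\gamma$. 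I expect the main obstacle to be exactly the bookkeeping around the non-constant $\lambda$ in the uniform-integrability/moment step and in the covariance estimates --- in particular verifying that replacing $\eps^{\gamma^2/2}$ by $e^{-\frac{\gamma^2}{2}\lambda(x+\eps u)^2\log(\eps^{-1})}$ is harmless via the uniform-convergence argument, and checking that shrinking the integrability exponent from $2d/\gamma^2$ to $2d/(\gamma^2(1+\rho))$ still leaves room above $1$, which is precisely what the hypothesis on $\rho$ secures; everything else is a faithful transcription of \cite{Vihko24}.
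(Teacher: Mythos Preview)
Your proposal is correct and follows essentially the same approach as the paper's own proof: the decomposition $G=S+H$ via \cite{JSW19}, the verification that the $\lambda$-prefactors preserve independence and $\star$-scale invariance of the auxiliary fields, Shamov's criterion for the GMC with the reduced moment range $q<2d/(\gamma^2(1+\rho))$, the calculus reductions replacing $\lambda(x+\eps u)$ by $\lambda(x)$, the second- and fourth-moment comparisons with Lemma~3.7 of \cite{Vihko24}, and the passage from $S$ to $G$ via the density $e^{\gamma H(u)-\frac{\gamma^2}{2}\lambda(u)^2 g_{G,S}(u)}$ with the remainder $R_\eps$ all match the paper line by line.
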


\end{appendix}

\bibliographystyle{./Martin}
\bibliography{./refs}

\end{document}